\newenvironment{proof}
               {\begin{sloppypar} \noindent{\bf Proof}}
               {\hspace*{\fill} $\square$ \end{sloppypar}}
\newtheorem{atheorem}{\bf \temp}[section]
\newtheorem{thm}[atheorem]{Theorem}
\newtheorem{cor}[atheorem]{Corollay}
\newtheorem{lem}[atheorem]{Lemma}
\newtheorem{de}[atheorem]{Definition}
\newtheorem{rem}[atheorem]{Remark}
\def\bc{\begin{center}}
\def\ec{\end{center}}
\def\@biblabel#1{#1.\hfill}
\numberwithin{equation}{section}
\title{\huge\bfseries\textrm{Small noise may diversify collective motion in Vicsek model*}\footnotetext{*This work was supported by
the National Key Basic Research Program of China (973 program) under grant 2014CB845301/2/3, and by the National Natural Science Foundation of
China under grants No. 61203141 and 91427304.}\footnotetext{*Part
of this paper will been reported in The 34th Chinese Control Conference, 2015, Hangzhou, China.}}
\author{\normalsize\textrm {Ge CHEN, \emph{Member, IEEE}}\\
\footnotesize Key Laboratory of Systems and Control \&  National Center for Mathematics and Interdisciplinary Sciences,\\
\footnotesize Academy of Mathematics and Systems Science, Chinese Academy of Sciences, Beijing, 100190, China \\
\footnotesize Emails: chenge@amss.ac.cn}
\begin{document}
\maketitle
\emph{\textbf{Abstract}-}
Natural systems are inextricably affected by noise. Within recent decades, the manner in which noise affects the collective behavior of self-organized systems, specifically, has garnered considerable interest from researchers and developers in various fields. To describe the collective motion of multiple interacting particles, Vicsek \emph{et al.} proposed a well-known self-propelled particle (SPP) system, which exhibits a second-order phase transition from disordered to ordered motion in simulation; due to its non-equilibrium, randomness, and strong coupling nonlinear dynamics, however, there has been no rigorous analysis of such a system to date. To decouple systems consisting of deterministic laws and randomness, we propose a general method which transfers the analysis of these systems to the design of cooperative control algorithms. In this study, we rigorously analyzed the original Vicsek model under both open and periodic boundary conditions for the first time, and developed extensions to heterogeneous SPP systems (including leader-follower models) using the proposed method. Theoretical results show that SPP systems switch an infinite number of times between ordered and disordered states for any noise intensity and population density, which implies that the phase transition indeed takes a nontraditional form. We also investigated the robust consensus and connectivity of these systems. Moreover, the findings presented in this paper suggest that our method can be used to predict possible configurations during the evolution of complex systems, including turn, vortex, bifurcation and flock merger phenomena as they appear in SPP systems.

{\small \bfseries \emph{Keywords}- Vicsek model, collective motion, self-propelled particles, heterogeneous multi-agent system, robust consensus}

\section{Introduction}
``Natural systems are undeniably subject to random fluctuations, arising from either environmental
variability or thermal effects" \cite{sagu2007}. The manner in which noise affects the collective behavior of self-organized systems, which are shaped by the interplay of deterministic laws and randomness, has fascinated researchers in various fields such as catalysis, cosmology, biology, reactive mixing, colloidal chemistry, geophysics, electronic engineering, statistical physics, economics, and finance throughout the past several decades \cite{sagu2007,shin2001,Raser2005,Tsimring2014,Vicsek2012,Black1986}. The collective motion of groups of animals, for example, is a common (though highly remarkable) natural phenomenon that closely relates to this area of research. Schools of fish, flocks of birds, and groups of ants typically move in a highly orderly fashion that has been quantitatively described, for instance, by the now well-known self-propelled particle (SPP) system proposed by Vicsek \emph{et al.} \cite{Vicsek1995}.
In this system each agent moves with a constant speed, and at each time step
 adopts the average direction of motion of the other agents in their local neighborhood up to some added noise.
Using simulations,
Vicsek \emph{et al.} explored the relation between order, noise, and population density, and found that the SPP system exhibits a second-order phase transition from disordered to ordered motion concerning noise and population density under periodic boundary conditions \cite{Vicsek1995}.

The SPP system (also referred to as the \emph{Vicsek model}) is of interest to biologists, physicists, control theorists, and mathematicians because it captures common features of a number of real-world systems and is considered as a minimal model \cite{Vicsek2012}. For example, the SPP system's phase transition is similar to the ferromagnetic phase transition \cite{Vicsek1995,Toner2005} and to superconduction \cite{Alicea2005,Dmitriev2012}. Variations of the Vicsek model can be applied to study the collective motion of a wide range of biological systems such as cell colonies, flocks of birds, and swarms of locusts \cite{Buhl2006,Baskaran2009,Bialek2012,Belmonte2008}, and are also related to certain engineering applications such as the distributed computation and formation control of multi-agent systems [15-17]. Another important reason that the Vicsek model has become a common approach to theoretical research on complex systems is because it represents a simple, local rule of interaction that results in complex, global behavior.

To mathematically analyze the Vicsek model, its basic rule must be modified in existing research. Jadbabaie \emph{et al.} did so by omitting the noise and locally linearizing the updating equation of the movement direction of each agent \cite{Jad1}. This modification is adopted by other researchers \cite{sakvin,tang2007,Ren,Chen2014}. In a previous study, we introduced the percolation theory to investigate this modified system with a large population, and quantitatively described its smallest possible interaction radius (or population density under a scaling) for consensus  \cite{Chen2014}; this result shed light on the phase transition of the Vicsek model from disordered to ordered motion concerning population density in a short time period (provided the system encounters small noise), because when the population size is large, according to the law of large numbers, the effect of the noise is negligible in a brief time period. However, when the time period is lengthy, the cumulative effect of even relatively small amounts of noise cannot be omitted and do affect the system's global behavior. Until now, as mentioned above, the manner in which noise affects the collective behavior of the SPP system within longer time periods is unclear. Another modification is to assume that each agent can communicate with all of the others in the system at any time \cite{smale2007,Cucker2008,Carrillo2010,Shen2007,Park2010,Bernoff2013}. The literature also contains studies in which robust consensus is investigated by assuming the interaction between agents does not depend on the states of individual agents \cite{Wang2009,Shi2013,Tian2009,Munz2011,Khoo2009,CY2011}, as well as several reviews of  the Vicsek model and related concepts \cite{Vicsek2012,Yates2010,Hu2013}. To the best of our knowledge, however, no existing mathematical analysis of Vicsek-type models can maintain all three features of the original Vicsek model (refers to the SPP system in \cite{Vicsek1995}): self-driving, local interaction, and randomness.

Physicists mainly use hydrodynamics to analyze the Vicsek model. This method assumes that population size is infinite and approximates the Vicsek model to certain partial differential equations or stochastic partial differential equations \cite{Toner2005}. However, this approximation inevitably changes some inherent properties of the model, and can only represent some properties of the original system. Though the Vicsek model has been studied for twenty years and a number of works on the subject have provided valuable insight, physicists still lack a global understanding of it \cite{Solon2015}.

In this study, we attempted a global analysis of the original Vicsek model and a few heterogeneous SPP systems. The main contributions of this paper can be summarized as follows.

First, we propose a novel, general method of decoupling the heterogeneous self-organized system formulated by deterministic laws and randomness which widely exists in nature, engineering applications, societies, and economies \cite{sagu2007,shin2001,Raser2005,Tsimring2014,Vicsek2012,Black1986}. Our method transfers the analysis of such systems to the design of control algorithms, though the models do not contain any control input. Using the propose method, we rigorously analyze the original Vicsek model and create extensions to heterogeneous SPP systems (including the leader-follower model). We also provide a few clear answers to robust consensus and connectivity problems which are of interest in the field of
 multi-agent systems \cite{Jad1,Jad2007,Wang2009,Shi2013,Tian2009,Munz2011,Khoo2009,CY2011}.

In addition to analysis of final states, our method can also be used to predict possible configurations during the evolution of complex systems. As an example, we show that SPP systems can spontaneously generate turn, vortex, bifurcation, and flock merger phenomena. Our method is particularly adept at predicting events which happen with small probability in finite time that are difficult to observe through simulation, demonstrating potential application in complex engineering practices such as collision analysis/avoidance.

The results we present here also have significance in regard to physics and biology. We show that the Vicsek model switches an infinite number of times between ordered and disordered states for any noise intensity and population density, which indicates that even small noise may break the order of the system; this lends mathematical proof to the concept that randomness can result in non-equilibrium systems exhibiting anomalously large fluctuations \cite{Tsimring2014,Keizer1987}. The same result implies that the phase transition of the Vicsek model actually differs in form from the one traditionally assumed \cite{Vicsek1995,Czirak1999}. Furthermore, to some degree, our results provide an explanation for switches in group movement direction and large fluctuations of order parameters observed in locust swarm experiments at low and middle population densities \cite{Buhl2006}, and allows us to predict that these phenomena will continue to exist at high population densities when the time step is sufficiently large.

The rest of the paper is organized as follows: In Section \ref{problem}, we will introduce our model and give some definitions.
 Section \ref{transfer} provides a key method to analyze our models.
 The main results under open and periodic boundary conditions are put in Sections \ref{mrs_1} and \ref{mrs_2} respectively. In Section \ref{mrs_3} we give a theorem under an assumption.
Section \ref{sim} provides some simulations, and Section \ref{conclude} concludes this paper with future works.

\section{Models and definitions}\label{problem}
\subsection{The Original Vicsek Model}
The original Vicsek model
consists of $n$ autonomous agents moving in the plane with the same
speed $v(v>0)$,
where each agent $i$ contains two state variables: $X_i(t)=((x_{i1}(t),x_{i2}(t))\in\mathbb{R}^2$ and $\theta_i(t)\in
[-\pi,\pi)$, denoting its position and heading at time $t$ respectively.
Then the agent $i$'s velocity is $v(\cos\theta_i(t),\sin\theta_i(t))$ at time $t$.
Each agent's heading is updated according to a local rule
based on the average direction of its neighbors, and two agents are called neighbors if and only if their
distance is less than a pre-defined radius $r
(r>0)$. Let
$$\mathcal{N}_i(t):=\left\{j:\|X_i(t)-X_j(t)\|_2\leq r \right\}$$ denote the neighbor set
of agent $i$ at time $t$, where  $\|\cdot\|_2$ is the Euclidean norm. Following \cite{Vicsek1995}, the dynamics of the original Vicsek model can be formulated by
\begin{eqnarray}\label{m1_00}
\begin{aligned}
&\theta_{i}(t+1)={\rm{atan2}} \left(\sum_{j\in \mathcal{N}_{i}(t)}
\sin\theta_{j}(t),\sum_{j\in\mathcal{N}_{i}(t)}\cos
\theta_{j}(t)\right)+\zeta_i(t),
\end{aligned}
\end{eqnarray}
and
\begin{eqnarray}\label{m1}
\begin{aligned}
X_i(t+1)&=X_i(t)+V_i(t+1)=X_i(t)+v(\cos\theta_i(t+1),\sin\theta_i(t+1))
\end{aligned}
\end{eqnarray}
for all $i\in[1,n]$ and $t\geq 0$, where the function $atan2$ is the arctangent function with two arguments\footnote{Literature \cite{Vicsek1995} uses the $\arctan$ function here, but it should be not correct because the  quadrant information is lost.}, and $\{\zeta_i(t)\}$ is a random noise sequence  independently and uniformly distributed in a fixed interval whose midpoint is $0$. The system (\ref{m1_00})-(\ref{m1}) is called as the \emph{original Vicsek model}.
Let $X(t)=(X_1(t),X_2(t),\ldots,X_n(t))$ and $\theta(t)=(\theta_1(t),\theta_2(t),\ldots,\theta_n(t))$.
The original Vicsek model is very complex to analyze in mathematics. An important step forward in analyzing
this model was given by Jadbabaie \emph{et al.} in \cite{Jad1} who omitted
the noise item and locally linearized the updating rule (\ref{m1_00}) of the heading as follows:
 \begin{eqnarray*}\label{Jad_mod}
\theta_i(t+1)= \frac{1}{\mathcal{N}_{i}(t)}\sum_{j\in\mathcal{N}_{i}(t)} \theta_j(t).
\end{eqnarray*}

\subsection{Our Heterogeneous SPP systems}
To be more practical this paper will make some extensions to the original Vicsek model.
First we assume that each agent $i$ has different interaction radius $r_i>0$, and the interaction weight between two agents $i$ and $j$
is a non-negative function $f_{ij}(t)$ satisfying:\\
(i) $f_{ii}(t)> 0$ for all $i,t$, which means that each agent has a certain inertia; \\
(ii)  $f_{ij}(t)=0$ when $\|X_i(t)-X_j(t)\|_2>r_i$  for all $i,j,t$,
which indicates each agent cannot receive information directly from the ones out of its interaction radius.\\
Second we consider more general noise. Let $\xi_i(t)$ denote the new noise. Let
$\Omega^t=\Omega_n^t\subseteq \mathds{R}^{n\times(t+1)}$ be the sample space of $(\xi_i(t'))_{1\leq i \leq n, 0\leq t' \leq t}$,
and $\mathcal{F}^t=\mathcal{F}_n^t$ be its Borel $\sigma$-algebra. Additionally we define $\Omega^{-1}$ be the empty set.
Let $P=P_n$ be the probability measure on $\mathcal{F}^{\infty}$ for $(\xi_i(t'))_{1\leq i \leq n, t' \geq 0}$, so  the
 probability space is written as $( \Omega^{\infty},\mathcal{F}^{\infty},P)$. Throughout this paper we assume there exists a constant $\eta=\eta_n>0$ such that for all initial positions $X(0)$ and headings $\theta(0)$ and $t\geq 0$,  the joint probability density of $(\xi_1(t),\ldots,\xi_n(t))$
in the region $[-\eta,\eta]^n$ has a uniform lower bound $\underline{\rho}=\underline{\rho}(\eta,n)>0$ under all previous samples, i.e.,
for any real numbers $a_i, b_i$ with $-\eta\leq a_i < b_i \leq \eta$, $1\leq i \leq n$,
\begin{eqnarray}\label{noise_cond_2}
\begin{aligned}
&P\left(\bigcap_{i=1}^n \left\{ \xi_i(t)\in [a_i,b_i]\right\}| \forall w_{t-1} \in \Omega^{t-1} \right)\geq  \underline{\rho} \prod_{i=1}^n (b_i-a_i),~~~~\forall t\geq 0.
\end{aligned}
\end{eqnarray}
We would like to point out that in addition to the independent and uniform noise in the original Vicsek model, the new noise in (\ref{noise_cond_2}) also contains non-degenerate Gaussian white noise and some other bounded or unbounded noises.
With the above two extensions the equation (\ref{m1_00})  of the original Vicsek model is changed to
\begin{eqnarray}\label{m1_new}
&&\theta_{i}(t+1)={\rm{atan2}}\left(\sum_{j=1}^n f_{ij}(t)
\sin\theta_{j}(t),\sum_{j=1}^n f_{ij}(t)\cos
\theta_{j}(t)\right)+\xi_i(t).\nonumber
\end{eqnarray}
To simplify the exposition we record the system evolved by (\ref{m1}) and (\ref{m1_new}) as \emph{System I}.

We also consider the system whose updating equation of heading is
 \begin{eqnarray}\label{model1}
\theta_i(t+1)= \frac{1}{\sum_{j=1}^n f_{ij}(t)}\sum_{j=1}^n f_{ij}(t)\theta_j(t)+\xi_i(t).
\end{eqnarray}
For all $i\in[1,n]$ and $t\geq 0$, we restrict the value of heading $\theta_i(t)$  to the interval $[-\pi,\pi)$ by modulo $2\pi$ when it is out of this interval.
Similarly, we record the system evolved by (\ref{m1}) and (\ref{model1}) as \emph{System II}.
As a departure from existing modifications made for mathematical analysis, System II maintains the self-driving, local interaction, and randomness features of the original Vicsek model. We demonstrate below via simulations that System II in fact exhibits several properties similar to the original Vicsek model (Section \ref{sim}).

It is worth noting that Systems I and II can satisfy leader-follower relationships within a flock - for example, if agent $i$ is the follower of agent $j$, we can set $f_{ij}(t)$ to a large value and $f_{ik}(t)=0$ when $k\neq i,j$. The leader-follower relationship has been observed in real-world experiments \cite{Nagy2010}, and ours is the first study to investigate how noise affects order in this manner.
\subsection{Order, Robust Consensus and Connectivity}\label{Subsection_def2}
We first investigate how the noise affects the order.
Following \cite{Vicsek1995}, we define the order parameter $$\varphi(t):=\frac{1}{n}\big\|\sum_{i=1}^n\left(\cos\theta_i(t),\sin\theta_i(t)\right)\big\|_2$$
for all $t\geq 0$. Clearly, $\varphi(t)$ is close to its extreme value, $1$, indicating all the agents move in almost the same direction; when closer to $0$, $\varphi(t)$ indicates an absence of any collective alignment. Naturally, we say Systems I and II are \emph{ordered} at time $t$  when $\varphi(t)$ is close to $1$, and are \emph{disordered} when $\varphi(t)$ is close to $0$.

We also give an intuitive definition concerning the order:
\begin{de}
For any heading vector $\theta=(\theta_1,\theta_2,\ldots,\theta_n)\in [-\pi,\pi)^n$, define the length of the shortest interval which can cover it as
\begin{eqnarray*}
d_{\theta}:=\inf\left\{l\in[0,2\pi):  \mbox{there exists a constant } c\in[-\pi,\pi)\right.\\
  \left.\mbox{ such that } \theta_i \in[c,c+l] \mbox{ for all }1\leq i\leq n \right\},
  \end{eqnarray*}
  where $[c,c+l]:=[c,\pi)\cup [-\pi,c+l-2\pi]$ for the case of $c+l\geq\pi$.
\end{de}
This definition can also be understood as the \emph{maximum heading difference} in the flock. Clearly, $d_{\theta}$ is close to $0$ when all the agents move in almost the same direction.

The robust consensus has attracted much attention in multi-agent system research \cite{Wang2009,Shi2013,Tian2009,Munz2011,Khoo2009,CY2011}. Wang and Liu \cite{Wang2009}, for example, provided a definition of robust consensus for systems whose network topologies do not couple the agents' states. We adapted this definition to suit our model as follows:
\begin{de}\label{def_robust_consensus}
System I (or II) achieves robust consensus if there exists a function $g(\cdot)$ satisfying $\lim_{x\rightarrow 0^+}g(x)=0$, such that for any $\eta>0$ and $\omega\in \Omega^{\infty}$, $$\limsup_{t\rightarrow\infty} d_{\theta(t)} \leq g(\eta).$$
\end{de}
This paper will study whether the robust consensus can be reached.

The connectivity of the network topology is a key issue for consensus of multi-agent systems. For System I (or II), let
$G(t)=G(\mathcal{X},\mathcal{E}(t))$ denote its underlying graph at time $t$, where the vertex set $\mathcal{X}$ is the $n$ agents, and the edge set $\mathcal{E}(t)=\{(j,i):\|X_i(t)-X_j(t)\|_2\leq r_i\}.$ Note that $G(t)$ is a directed graph in our heterogeneous system.
A directed graph is said to be \emph{strongly connected} if there exists at least one path in each direction between each pair of vertices of the graph.
Let $\widetilde{G}(t)$
denote the graph obtained by replacing all directed edges of $G(t)$ with undirected edges.
Clearly, the graph $\widetilde{G}(t)$ is undirected. An undirected graph is said to be \emph{connected} if there exists at least one path between its any two vertices. If  $\widetilde{G}(t)$ is connected, then $G(t)$ is said to be \emph{weakly connected}. If a directed graph is strongly connected, of course it is also weakly connected.

 Given two graphs $G(\mathcal{X},\mathcal{E}_1)$ and $G(\mathcal{X},\mathcal{E}_2)$, define $G(\mathcal{X},\mathcal{E}_1)\cup G(\mathcal{X},\mathcal{E}_2):=G(\mathcal{X},\mathcal{E}_1\cup \mathcal{E}_2)$.
  Following \cite{Shi2013}, we give the definition of uniformly joint weak connectivity as follows:
\begin{de}
The graph sequence $\{G(t)\}_{t=0}^{\infty}$ is said to be uniformly jointly weakly connected if there exists an integer $T>0$ such that
$\cup_{k=t}^{t+T}\widetilde{G}(k)$ is connected for any $t>0$.
\end{de}

The assumption of uniformly joint connectivity is widely considered a sufficient condition of consensus in multi-agent systems \cite{Jad1,sakvin,Ren,Wang2009,Shi2013,Tian2009,Munz2011,CY2011,Huang2012}. For systems whose topologies are coupled with states, whether this assumption can be satisfied remains a quite interesting problem. In this paper, we will show with probability $1$ the underlying graphs of our heterogeneous SPP systems are not uniformly jointly weakly connected and, of course, they are not uniformly jointly connected in a homogeneous case.

\subsection{Turn, Vortex, Bifurcation and Merging}

Turning, bifurcation, and merging of flocks are very common phenomena in nature. These phenomena have been studied under the well-known Boid model using simulations \cite{Reynold1987}. Olfati-Saber \cite{Saber2006}, for example, provided a specific flocking algorithm that can produce bifurcation and merger behavior by adding a global leader and a few obstacles.We will show that the SPP model can spontaneously produce these phenomena, which are difficult to precisely define.\\
\emph{Turn} and \emph{Vortex}: All agents of a flock gradually change their headings from one angle to another in a finite amount of time, where the difference of the two angles is larger than a certain value (for example, $\pi/2$). During this time, all the agents stay nearly synchronized, i.e., their headings are almost the same at each time step. A turn with change in angle exceeding $2\pi$ is called a \emph{vortex}.\\
\emph{Bifurcation}: A group of agents whose directions of motion are almost same may separate into two groups with different directions, where the agents in each group are nearly synchronized.\\
\emph{Merging}: Two groups of agents moving in different directions may merge into one group that moves in almost the same direction.

\section{Transform to Robust Cooperative Control}\label{transfer}
To analyze Systems I and II, we first must construct two robust control systems capable of transforming the analysis to the design of control algorithms. For $i=1,\ldots,n$ and $t\geq 0$, let $\delta_i(t)\in(0,\eta)$ be an arbitrarily given real number,
$u_i(t)\in [-\eta+\delta_i(t),\eta-\delta_i(t)]$ denote a bounded control input, and $b_i(t)\in [-\delta_i(t),\delta_i(t)]$ denote the
parameter uncertainty.
For System I we construct the following control system
\begin{eqnarray}\label{model2_new}
\left\{%
\begin{array}{ll}
    \theta_i(t+1)={\rm{atan2}}(\sum_{j=1}^n f_{ij}(t)
\sin\theta_{j}(t),\sum_{j=1}^n f_{ij}(t)\cos
\theta_{j}(t))+u_i(t)+b_i(t), \\
    X_{i}(t+1)=X_{i}(t)+v(\cos\theta_{i}(t+1),\sin\theta_i(t+1)),
\end{array}%
\right.
\end{eqnarray}
and for System II we do the same:
\begin{eqnarray}\label{model2}
\left\{%
\begin{array}{ll}
    \theta_i(t+1)=\frac{1}{\sum_{j=1}^n f_{ij}(t)}\sum_{j=1}^n f_{ij}(t)\theta_j(t)+u_i(t)+b_i(t), \\
    X_{i}(t+1)=X_{i}(t)+v(\cos\theta_{i}(t+1),\sin\theta_i(t+1)).
\end{array}%
\right.
\end{eqnarray}



Let $ S^*:= \mathds{R}^{2n}\times [-\pi,\pi)^n$(or $[0,L]^{2n}\times [-\pi,\pi)^n$ for the periodic boundary case defined in Section \ref{mrs_2}) be the state space of  $(X(t), \theta(t))$ for all $t\geq 0$. Given $ S_1\subseteq S^*$, we say $ S_1$ \emph{is reached at time $t$} if  $(X(t),\theta(t))\in  S_1$, and i\emph{s reached in the time $[t_1,t_2]$} if there exists $t'\in [t_1,t_2]$ such that  $ S_1$ is reached at time $t'$.

\begin{de}\label{def_reach}
Let  $ S_1, S_2\subseteq  S^*$ be two state sets.
Under protocol (\ref{model2_new}) (or (\ref{model2})), $S_1$ is said to be finite-time robustly reachable from $ S_2$ if: For any $(\theta(0),X(0))\in S_2$, $S_1$ is reached at time $0$, or there exist constants $T>0$ and $\varepsilon\in(0,\eta)$ such that we can find $\delta_i(t)\in[\varepsilon,\eta)$ and $u_i(t)\in [-\eta+\delta_i(t),\eta-\delta_i(t)]$, $1\leq i\leq n$, $0\leq t<T$ which guarantees that
$ S_1$ is reached in the time $[1,T]$ for arbitrary $b_i(t)\in [-\delta_i(t),\delta_i(t)]$,
$1\leq i\leq n$, $0\leq t<T$.
\end{de}

\begin{rem}\label{rem_def_reach}
Under normal circumstances, $\delta_i(t)$ serves as a constant $\varepsilon>0$ for all $1\leq i\leq n$ and $0\leq t<T$ to guarantee that system (\ref{model2_new}) (or (\ref{model2})) robustly reaches a designated state set in time $[1,T]$. Additionally, $\varepsilon$  can be set to a sufficiently small value such that the uncertainty item $b_i(t)$ does not affect the system's macro states, such as the ordered or disordered states in finite time.
\end{rem}

The following lemma establishes a connection between System I and protocol (\ref{model2_new}) , and also between System II and protocol (\ref{model2}).

\begin{lem}\label{robust}
Let $ S_1,\ldots,  S_k\subseteq  S^*$, $k\geq 1$ be state sets and assume they are finite-time robustly reachable from $ S^*$
under protocol (\ref{model2_new}) (or (\ref{model2})). Suppose the initial positions $X(0)$ and headings $\theta(0)$ are arbitrarily given. Then for System I (or II):\\
(i)~ With probability $1$  there exists an infinite sequence $t_1<t_2<\ldots$ such that $ S_j$ is reached at time
$t_{lk+j}$ for all $j=1,\ldots,k$ and $l\geq 0$.\\
(ii)~ There exist constants $T>0$ and $c\in (0,1)$ such that
$$P\left(\tau_i-\tau_{i-1}>t\right)\leq c^{\lfloor t/T\rfloor}, \forall i,t\geq 1,$$
where $\tau_0=0$ and
$\tau_i:=\min\{t: \mbox{ there exist }\tau_{i-1}<t_1'<t_2'<\cdots<t_k'=t \mbox{ such that for all }j\in[1,k], S_j \mbox{ is reached at time }t_j'\}$ for $i\geq 1$.
\end{lem}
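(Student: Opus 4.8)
The plan is to couple each stochastic system with its robust-control counterpart by letting the noise play the role of the controlled perturbation $u_i(t)+b_i(t)$. First I would record a purely deterministic observation: the update of $\theta_i(t+1)$ in system~I is obtained from that in (\ref{model2_new}) by replacing the perturbation $u_i(t)+b_i(t)$ with $\xi_i(t)$, while the position update is identical, and the ``averaging'' term (the $\mathrm{atan2}$ of the weighted trigonometric sums, or, for system~II and (\ref{model2}), the weighted arithmetic mean) is a deterministic function of the current headings and of the weights $f_{ij}(t)$, which are themselves determined by the current configuration. Hence, by induction on $t$: if on a window $[s,s+T)$ one has $\xi_i(s+\tau)=u_i(s+\tau)+b_i(s+\tau)$ for all $i$ and $0\leq\tau<T$ with $b_i(s+\tau)\in[-\delta_i(s+\tau),\delta_i(s+\tau)]$, then the trajectory of system~I on $[s,s+T]$ coincides with a trajectory of (\ref{model2_new}) produced by those controls and uncertainties. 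In particular, anything a robust control strategy forces to happen against \emph{every} admissible $b$ also happens for the stochastic system on the event that the noise ``mimics'' it.

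Next I would convert this into a uniform conditional lower bound. Using that $S_1,\ldots,S_k$ are finite-time robustly reachable from $S^*$ (with constants that, in our applications, may be taken uniform over $S^*$), fix $\varepsilon\in(0,\eta)$ and integers $T_j>0$ so that from any configuration there are admissible margins $\delta_i(t)\in[\varepsilon,\eta)$ and controls $u_i(t)\in[-\eta+\delta_i(t),\eta-\delta_i(t)]$ driving (\ref{model2_new}) into $S_j$ within $T_j$ steps for every admissible $b$. Concatenating these legs in the order $S_1,\ldots,S_k$ --- restarting the appropriate strategy from the configuration reached after each leg, and taking at most one extra step per junction so that the successive hitting times are strictly increasing --- yields, for every starting time $s$ and every past, a plan over a window of length $T:=\sum_{j}T_j+k$ that robustly steers the system through $S_1,\ldots,S_k$ in this order at strictly increasing times in $[s+1,s+T]$. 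For this plan the target intervals $[u_i(t)-\delta_i(t),\,u_i(t)+\delta_i(t)]$ lie in $[-\eta,\eta]$, have length $\geq 2\varepsilon$, and $u_i(t),\delta_i(t)$ are $\mathcal{F}^{t-1}$-measurable; applying the noise lower bound (\ref{noise_cond_2}) at each of the $T$ steps and chaining the resulting conditional estimates by the tower property gives
\[
P\left(S_1,\ldots,S_k \text{ reached in this order at strictly increasing times in }[s+1,s+T]\ \mid\ \mathcal{F}^{s-1}\right)\ \geq\ q:=\left(\underline{\rho}\,(2\varepsilon)^{n}\right)^{T}\ >\ 0,
\]
uniformly in $s$ and in the history.

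With this bound in hand, I would note that $\tau_0=0$ and the $\tau_i$ in the statement are stopping times for $\{\mathcal{F}^{t}\}$, since ``$S_j$ is reached at time $t'$'' is $\mathcal{F}^{t'-1}$-measurable. Splitting $[\tau_{i-1},\infty)$ into consecutive blocks of length $T$ and applying the displayed bound at the start of each block --- legitimate because (\ref{noise_cond_2}) bounds the relevant conditional probabilities from below uniformly, so $q$ applies afresh no matter what happened before --- one obtains $P(\tau_i-\tau_{i-1}>mT\mid\mathcal{F}^{\tau_{i-1}})\leq(1-q)^{m}$ on $\{\tau_{i-1}<\infty\}$. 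Letting $m\to\infty$ shows $\tau_i<\infty$ almost surely given $\tau_{i-1}<\infty$, so by induction from $\tau_0=0$ all $\tau_i$ are almost surely finite; on that probability-one event, listing for each $i$ the strictly increasing hitting times of $S_1,\ldots,S_k$ inside $(\tau_{i-1},\tau_i]$ and concatenating over $i$ gives a sequence $t_1<t_2<\cdots$ with $S_j$ reached at $t_{lk+j}$, which is~(i). For~(ii), $P(\tau_i-\tau_{i-1}>t)\leq P(\tau_i-\tau_{i-1}>\lfloor t/T\rfloor T)\leq(1-q)^{\lfloor t/T\rfloor}$, i.e.\ the claim with $c:=1-q\in(0,1)$. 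System~II is handled identically, with (\ref{model2}) in place of (\ref{model2_new}).

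I expect the main obstacle to be the second step: extracting from the hypothesis a \emph{single} window length $T$ and margin $\varepsilon$ --- hence a single $q>0$ --- valid uniformly over all of $S^*$ and all histories, and splicing the separate robust strategies for $S_1,\ldots,S_k$ into one admissible plan that also respects the strict ordering of the hitting times. The deterministic coupling of step one and the stopping-time/geometric-tail bookkeeping of step three are then essentially routine, given (\ref{noise_cond_2}).
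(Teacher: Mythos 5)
Your proposal is correct and follows essentially the same route as the paper: you couple the stochastic system to the controlled one by letting the noise mimic $u_i(t)+b_i(t)$, use the density lower bound (\ref{noise_cond_2}) to get a uniform positive probability $q$ that the noise realizes the concatenated robust strategies for $S_1,\ldots,S_k$ over a fixed window $T$, and then chain conditional estimates (Bayes/tower property) to obtain the almost-sure recurrence in (i) and the geometric tail bound in (ii), exactly as in the paper's argument via (\ref{rob_1})--(\ref{rob_3}). The uniformity of $T_j,\varepsilon_j$ over $S^*$ that you flag as the main obstacle is likewise taken for granted in the paper's proof and is supplied by the explicit reachability lemmas in the applications.
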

\begin{proof}
(i) Throughout this proof we assume the initial state is arbitrarily given. We recall that
$\Omega^t\subseteq \mathds{R}^{n\times(t+1)}$ is the sample space of $(\xi_i(t'))_{1\leq i \leq n, 0\leq t' \leq t}$.
 Under System I (or II),
the values of $X(t)$ and $\theta(t)$  are determined by the sample $w_{t-1}\in\Omega^{t-1}$, so
for any $t\geq 1$ and $j\in[1,n]$
we can set $$\Omega_j^{t-1}:=\left\{w_{t-1}\in\Omega^{t-1}: (X(t),\theta(t))(w_{t-1})\in S_j\right\}$$
to be the subset of $\Omega^{t-1}$ such that $S_j$ is reached at time $t$. Thus,
\begin{eqnarray}\label{rob_0}
\begin{aligned}
P\left(\left\{\mbox{$ S_j$ is reached at time $t$}\right\}|\forall w_{t-1}'\in \Omega_j^{t-1}\right)=1.
\end{aligned}
\end{eqnarray}
Also, by our assumption $ S_j$ is finite-time robustly reachable under protocol (\ref{model2_new}) (or (\ref{model2})), so with Definition \ref{def_reach} there exist constants $T_j\geq 2$ and $\varepsilon_j\in(0,\eta)$ such that for any $t\geq 0$ and $(X(t),\theta(t))\notin S_j$,
 we can find parameters $\delta_i(t')\in [\varepsilon_j,\eta)$ and control inputs $u_i(t')\in [-\eta+\delta_i(t'),\eta-\delta_i(t')]$, $1\leq i\leq n$, $t\leq t'\leq t+T_j-2$ with which the set $ S_j$ is reached in the time $[t+1,t+T_j-1]$ for any uncertainties $b_i(t')\in [-\delta_i(t'),\delta_i(t')]$, $1\leq i\leq n, t\leq t'\leq t+T_j-2$.
 This acts on System I (or II) indicating that for any $w_{t-1}^*\in (\Omega_j^{t-1})^c$,
 \begin{eqnarray}\label{rob_1}
\begin{aligned}
&P\left(\left\{\mbox{$ S_j$ is reached in $[t+1, t+T_j-1]$}\right\}| w_{t-1}^*\right)\\
&\geq P\left(\bigcap_{t\leq t'\leq t+T_j-2}\bigcap_{1\leq i\leq n}\left\{\xi_i(t')\in [u_i(t')-\delta_i(t'),u_i(t')+\delta_i(t')] \right\}|w_{t-1}^*\right).
\end{aligned}
\end{eqnarray}
Here $(\Omega_j^{t-1})^c$ means the complement set of $\Omega_j^{t-1}$.
Define
$$F_t:=\bigcap_{1\leq i\leq n}\left\{\xi_i(t)\in [u_i(t)-\delta_i(t),u_i(t)+\delta_i(t)] \right\}.$$
By the Bayes' theorem we can get
\begin{eqnarray}\label{rob_1_2}
\begin{aligned}
\mbox{the right side of (\ref{rob_1})}&=P\left( F_t|w_{t-1}\right)\prod_{t'=t+1}^{t+T_j-2} P\left(F_{t'}|\bigcap_{t\leq l<t'}F_{l},w_{t-1}^*\right)\\
&\geq \prod_{t'=t}^{t+T_j-2}\left[\underline{\rho} \prod_{i=1}^n (2\delta_i(t'))\right]\geq \underline{\rho}^{T_j-1} \left(2 \varepsilon_j \right)^{n (T_j-1)},
\end{aligned}
\end{eqnarray}
where the first inequality uses (\ref{noise_cond_2}) and the fact of $-\eta\leq u_i(t')-\delta_i(t')<u_i(t')+\delta_i(t')\leq \eta$ for
$1\leq i\leq n$ and $t\leq t'<t+T_j$.
Define the event
$$E_{j,t}:=\left\{\mbox{$ S_j$ is reached in $[t, t+T_j-1]$}\right\},$$
Combining (\ref{rob_0}), (\ref{rob_1}) and (\ref{rob_1_2}) yields
\begin{eqnarray}\label{rob_1_3}
\begin{aligned}
&P\left(E_{j,t} | \forall w_{t-1}\in \Omega^{t-1}\right)\geq \underline{\rho}^{T_j-1} \left(2 \varepsilon_j \right)^{n (T_j-1)},
\end{aligned}
\end{eqnarray}
where the second inequality uses (\ref{noise_cond_2}) and the fact of $-\eta\leq u_i(t')-\delta_i(t')<u_i(t')+\delta_i(t')\leq \eta$ for
$1\leq i\leq n$ and $t\leq t'\leq t+T_j-2$.
Using the Bayes' theorem and (\ref{rob_1_3}) we have for any $w_{t-1}\in \Omega^{t-1}$,
\begin{eqnarray}\label{rob_2}
\begin{aligned}
P\left(\bigcap_{j=1}^k E_{j,t+\sum_{l=1}^{j-1} T_l} | w_{t-1}\right)&=P\left( E_{1,t}|w_{t-1}\right)\prod_{j=2}^{k} P\left(E_{j,t+\sum_{l=1}^{j-1} T_l}\Big|\bigcap_{l=1}^{j-1} E_{l,t+\sum_{p=1}^{l-1} T_p},w_{t-1}\right)\\
&\geq \prod_{j=1}^{k} \left[\underline{\rho}^{T_j-1} \left(2 \varepsilon_j \right)^{n (T_j-1)}\right]:=c.
\end{aligned}
\end{eqnarray}

Set $E_t:=\bigcap_{j=1}^k E_{j,t+\sum_{l=1}^{j-1} T_l}$ and $T:=T_1+T_2+\ldots+T_k$.
For any integer $M>0$,
using Bayes' theorem again and  (\ref{rob_2}) we have
\begin{eqnarray}\label{rob_3}
\begin{aligned}
P\Big(\bigcap_{m=M}^{\infty}E_{mT}^c\Big)&=P\left(E_{MT}^c\right)\prod_{m=M+1}^{\infty}P\Big(E_{mT}^c\big| \bigcap_{M\leq m'<m}E_{m'T}^c\Big)\\
&=\left[1-P\left(E_{MT}\right)\right]\prod_{m=M+1}^{\infty}\Big[1-P\Big(E_{mT}\big| \bigcap_{M\leq m'<m}E_{m'T}^c\Big)\Big]\\
&\leq \prod_{m=M}^{\infty}\left(1-c\right)=0,
\end{aligned}
\end{eqnarray}
which indicates that with probability $1$ there exits an infinite sequence $m_1<m_2<\ldots$ such that $E_{m_l T}$ occurs for all $l\geq 1$.
Here $E_{mT}^c$ is the complement set of $E_{mT}$.
By the definition of $E_t$, for each $l\geq 0$ we can find a time sequence $t_{lk+j}\in [m_l T+\sum_{p=1}^{j-1} T_p, m_l T+\sum_{p=1}^{j} T_p-1]$, $1\leq j\leq k$ such that $S_j$ is reached at time $t_{lk+j}$.

(ii)
 For any $M\geq 0$ and $i>0$, the event $\tau_i-\tau_{i-1}>MT$ means that $E_t$ does not happen for all $t\in[\tau_{i-1}+1,\tau_{i-1}+1+(M-1)T]$.
 By the total probability theorem and (\ref{rob_3})  we have
\begin{eqnarray*}\label{rob_4}
\begin{aligned}
P\left\{\tau_i-\tau_{i-1}>MT \right\}&\leq P\left(\bigcap_{m=0}^{M-1}E_{\tau_{i-1}+1+mT}^c\right)\\
&=\sum_{t=0}^{\infty}P(\tau_{i-1}=t) P\left(\bigcap_{m=0}^{M-1}E_{t+1+mT}^c\right) \\
&\leq \left(1-c\right)^M\sum_{t=0}^{\infty}P(\tau_{i-1}=t)=\left(1-c\right)^M,
\end{aligned}
\end{eqnarray*}
so
\begin{eqnarray*}\label{rob_5}
\begin{aligned}
P\left(\tau_i-\tau_{i-1}>t \right)&\leq P\left(\tau_i-\tau_{i-1}>\lfloor \frac{t}{T} \rfloor T \right)\leq \left(1-c\right)^{\lfloor t/T\rfloor}.
\end{aligned}
\end{eqnarray*}
\end{proof}

Specially, for the case of $k=1$, from Lemma \ref{robust} we can get the following corollary:
\begin{cor}\label{robust2}
Let $ S \subseteq  S^*$ be a state set and assume it is finite-time robustly reachable from $S^c$
under protocol  (\ref{model2_new}) (or (\ref{model2})).
 Suppose the initial positions $X(0)$ and headings $\theta(0)$ are arbitrarily given.
Then for System I (or II):\\
(i)~With probability $1$ $ S$ will be reached an infinite number of times.\\
(ii)~There exist constants $T>0$ and $c\in (0,1)$ such that
$$P\left(\tau_i-\tau_{i-1}>t \right)\leq c^{\lfloor t/T\rfloor}, \forall i,t\geq 1,$$
where $\tau_0:=0$ and
$\tau_i:=\min\{t>\tau_{i-1}: S \mbox{ is reached at time }t \}$ for $i\geq 1$.
\end{cor}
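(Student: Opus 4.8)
The plan is to obtain Corollary \ref{robust2} as the $k=1$ instance of Lemma \ref{robust}, after one small observation reconciling the two reachability hypotheses. First I would note that if $S$ is finite-time robustly reachable from $S^c$, then it is also finite-time robustly reachable from the whole space $S^*$: for any initial state $(\theta(0),X(0))\in S$ the set $S$ is reached at time $0$ in the sense of Definition \ref{def_reach}, while for $(\theta(0),X(0))\in S^c$ the required constants $T$, $\varepsilon$ and the parameters $\delta_i(t)$, $u_i(t)$ are exactly those supplied by the hypothesis. Hence the single set $S_1:=S$ satisfies the premise of Lemma \ref{robust} with $k=1$.

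Next I would apply Lemma \ref{robust}(i) with $k=1$. It yields, with probability $1$, an infinite increasing sequence $t_1<t_2<\cdots$ such that $S_1=S$ is reached at every time $t_l$, $l\geq 1$; since there is only one set in the rotation, this says precisely that $S$ is reached infinitely often, which is statement (i) of the corollary.

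For statement (ii), I would invoke Lemma \ref{robust}(ii), again with $k=1$. In that case the stopping times defined there collapse: $\tau_0=0$ and $\tau_i=\min\{t:\ \exists\,\tau_{i-1}<t_1'=t \text{ with } S \text{ reached at }t_1'\}=\min\{t>\tau_{i-1}:\ S \text{ is reached at time }t\}$, which is verbatim the definition of $\tau_i$ in the corollary. Lemma \ref{robust}(ii) then provides constants $T>0$ and $c\in(0,1)$ with $P(\tau_i-\tau_{i-1}>t)\leq c^{\lfloor t/T\rfloor}$ for all $i,t\geq 1$, which is statement (ii).

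Since the substantive work — the Borel--Cantelli-type estimate assembled from the uniform noise lower bound (\ref{noise_cond_2}) together with the geometric tail bound on the return times — has already been carried out inside Lemma \ref{robust}, there is no genuine obstacle here. The only points that need care are the bookkeeping that links ``reachable from $S^c$'' to ``reachable from $S^*$'' and the check that the $k=1$ specialization of the $\tau_i$'s coincides with the corollary's statement; both are routine.
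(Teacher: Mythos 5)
Your proposal is correct and follows essentially the same route as the paper: note that reachability from $S^c$ implies reachability from $S^*$ (via Definition \ref{def_reach}, since from states in $S$ the set is reached at time $0$), and then specialize Lemma \ref{robust} to $k=1$. The extra check that the $k=1$ stopping times coincide with the corollary's $\tau_i$ is a harmless elaboration of what the paper leaves implicit.
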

\begin{proof}
Because $S$ is finite-time robustly reachable from $S^c$, of course it is also finite-time robustly reachable from $S^*$. From Lemma \ref{robust}
our result can be deduced directly.
\end{proof}

\begin{rem}
 Using Lemma \ref{robust} and Corollary \ref{robust2}, we can transfer the analysis of Systems I and II to design the robust control algorithms
for protocols  (\ref{model2_new}) and (\ref{model2}). In fact, based on Remark \ref{rem_def_reach} we can choose suitable parameters such that the
uncertainty item $b_i(t)$ does not affect the system's macro states in finite time, so the analysis of Systems I and II can be transformed to the design of the controls of the protocols  (\ref{model2_new}) and (\ref{model2}).
\end{rem}

\begin{rem}
The methods in  Lemma \ref{robust} and Corollary \ref{robust2} do not depend on detailed expressions of the systems. In fact, for the system
$x(t+1)=f(x(t),\xi(t))\in \mathds{R}^n$ with the noise $\xi(t)\in\mathds{R}^m$, we can apply the proposed methods to simplify the analysis - specifically, to predict possible configurations during the system's evolution and its final states.
\end{rem}

\section{Analysis under Open Boundary Conditions}\label{mrs_1}

This section will give some results under open boundary conditions of positions of agents, which indicates that all the agents can move on $\mathbb{R}^2$ without boundary limitation.
Throughout this section we make the following assumption:

\textbf{(A1)} Assume the population size $n\geq 2$, the parameters $\eta>0$, $\underline{\rho}>0$, $v>0$, $r_i\geq 0$, $1\leq i\leq n$, and the initial positions $X(0)\in\mathbb{R}^{2n}$ and headings $\theta(0)\in [-\pi,\pi)^n$ are arbitrarily given.


We also need introduce some definitions.
For any $t\geq 0$ and $1\leq i\leq n$, set
\begin{eqnarray*}
\widetilde{\theta}_i(t)=\left\{%
\begin{array}{ll}
{\rm{atan2}}(\sum_{j=1}^n f_{ij}(t)
\sin\theta_{j}(t),\sum_{j=1}^n f_{ij}(t)\\
\cos\theta_{j}(t))~\rm{for~System~I~and~protocol~(\ref{model2_new})},\\
\frac{1}{\sum_{j=1}^n f_{ij}(t)}\sum_{j=1}^n f_{ij}(t)\theta_j(t) \\
~~~~~~\rm{for~System~II~and~protocol~(\ref{model2})}.
\end{array}%
\right.
\end{eqnarray*}
Let $X=(X_1,\ldots,X_n)\in\mathbb{R}^{2n}$ and $\theta=(\theta_1,\ldots,\theta_n)\in [-\pi,\pi)^n$. For any $\alpha>0$, define
\begin{eqnarray*}
&& S_{\alpha}^1:=\big\{(X,\theta)\in S^*:\max_{1\leq i\leq n}|\theta_i|\leq \frac{\alpha}{2}\big\}.
\end{eqnarray*}
We see $S_{\alpha}^1$ is a set of ordered states when $\alpha$ is small.
The following Lemmas \ref{lem1} and \ref{lem1_2}  describe a transition to the ordered state for protocols (\ref{model2}) and (\ref{model2_new}) respectively.

\begin{lem}\label{lem1}
Assume that (A1) holds. Then
for any $\alpha>0$, $S_{\alpha}^1$ is finite-time robustly reachable from $ S^*$ under protocol (\ref{model2}).
\end{lem}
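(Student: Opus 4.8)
The plan is to design an explicit robust control strategy for protocol (\ref{model2}) that, starting from any state in $S^*$, drives all headings into the interval $[-\alpha/2,\alpha/2]$ within a bounded number of steps, no matter how the uncertainties $b_i(t)$ are chosen. First I would fix a small $\varepsilon>0$ (to be specified at the end, depending only on $\alpha$, $\eta$, and the chosen horizon $T$) and take $\delta_i(t)\equiv\varepsilon$ for all $i$ and $t$, so that $u_i(t)\in[-\eta+\varepsilon,\eta-\varepsilon]$ and $b_i(t)\in[-\varepsilon,\varepsilon]$. The key observation is that under (\ref{model2}) the new heading $\theta_i(t+1)$ equals the convex combination $\widetilde\theta_i(t)=\big(\sum_j f_{ij}(t)\theta_j(t)\big)/\sum_j f_{ij}(t)$ plus the controllable term $u_i(t)$ plus the small nuisance $b_i(t)$; since each convex combination of numbers lying in an interval of length $d_{\theta(t)}$ again lies in that interval, the quantity $d$ (the spread of headings) is non-increasing under the averaging step alone, and the controls let me translate and compress this spread.

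The main step is a one-step (or few-step) contraction lemma: if at time $t$ all headings lie in an interval $[c,c+d]$ with $d>0$, I choose $u_i(t)$ to shift $\widetilde\theta_i(t)$ toward $0$ by as much as the bound $\eta-\varepsilon$ permits, i.e. $u_i(t)=-\mathrm{sat}_{\eta-\varepsilon}(\widetilde\theta_i(t))$ roughly speaking, and also exploit that the $f_{ii}(t)>0$ inertia term keeps $\widetilde\theta_i(t)$ strictly inside $[c,c+d]$. Because headings are confined to $[-\pi,\pi)$, after finitely many such steps — at most $\lceil (2\pi)/(\eta-\varepsilon)\rceil$ of them — every heading can be brought within distance $\varepsilon$ (hence $\le\alpha/2$ once $\varepsilon$ is chosen $\le\alpha/2$, say $\varepsilon=\min\{\alpha/2,\eta/2\}$) of $0$; during all these steps the uncertainties $b_i(t)\in[-\varepsilon,\varepsilon]$ perturb the target by at most $\varepsilon$ per coordinate, which is absorbed by taking the horizon one step longer and by keeping a margin in the saturation. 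I would set $T$ equal to this bound plus a small constant and verify Definition \ref{def_reach}: for any $(X(0),\theta(0))\in S^*$, either $\theta(0)$ is already in $S^1_\alpha$, or the above choice of $\delta_i(t)=\varepsilon$ and $u_i(t)$ guarantees $S^1_\alpha$ is reached within $[1,T]$ for every admissible $b$.

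The subtlety I expect to be the real obstacle is the wrap-around of the modulo-$2\pi$ operation: the reduction of $\theta_i(t+1)$ back into $[-\pi,\pi)$ could in principle tear apart a set of nearly-aligned headings if the averaged value plus control lands near $\pm\pi$. I would handle this by always choosing the controls so that the updated (pre-modulo) headings stay strictly inside $(-\pi,\pi)$ — which is possible precisely because the averaging keeps $\widetilde\theta_i(t)$ inside the current interval and the controls only pull things toward $0$ — so the modulo operation is never actually triggered along the designed trajectory; only the uncertainty $b_i(t)$ of size $\le\varepsilon$ could push things out, and that is prevented by reserving an $\varepsilon$-margin from $\pm\pi$ in the saturation level. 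A secondary point worth checking is that the positions $X(t)$ play no role in this argument (open boundary, no position constraints in $S^1_\alpha$), so the heading dynamics can be analyzed in isolation; this is what makes the open-boundary case cleaner than the periodic one treated later. Once these points are settled, Lemma \ref{robust} immediately upgrades this deterministic controllability statement into the probabilistic conclusion for system II.
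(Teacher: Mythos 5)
Your proposal is correct and takes essentially the same route as the paper: the paper likewise uses a saturated feedback (WLOG $\alpha\le\eta$; $u_i(t)=\pm 3\eta/4$ with $\delta_i=\eta/4$ outside a dead zone, and $u_i(t)=-\widetilde\theta_i(t)$ with $\delta_i=\alpha/2$ inside it), shows $\max_i|\theta_i(t)|$ drops by at least $\eta/2$ per step until all headings lie in $[-\alpha/2,\alpha/2]$ after about $\lceil(2\pi-\alpha)/\eta\rceil$ steps, and the controls keep the pre-modulo headings inside $(-\pi,\pi)$ so the wrap-around never fires. The only nitpick is your parenthetical $\varepsilon=\min\{\alpha/2,\eta/2\}$, which leaves zero contraction margin in the saturated regime when $\alpha\ge\eta$; choosing $\varepsilon$ strictly smaller (as your own remark about keeping a saturation margin already indicates) fixes this trivially.
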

\begin{proof}
Without loss of generality we assume $\alpha\in(0,\eta]$. The main idea of this proof is: For each agent $i$, if its neighbors' average heading $\widetilde{\theta}_i(t)$ is larger than an upper bound, we set $u_i(t)$ be a negative input; if $\widetilde{\theta}_i(t)$ is less than a lower bound, we set $u_i(t)$ be a positive input; otherwise we select a control input such that $\theta_i(t+1)$ will be in the interval $[-\alpha/2,\alpha/2]$.
With this idea, for $t\geq 0$ and $1\leq i \leq n$  we choose
\begin{eqnarray}\label{lem1_1}
&&\left(\delta_i(t),u_i(t)\right)=\left\{%
\begin{array}{ll}
(\eta/4,-3\eta/4)~~~~~\mbox{if}~\widetilde{\theta}_i(t)> \eta-\alpha/2,\nonumber\\
(\alpha/2,-\widetilde{\theta}_i(t))~~~~\mbox{if}~\widetilde{\theta}_i(t)\in [\alpha/2-\eta,\eta-\alpha/2],\\
(\eta/4,3\eta/4)~~~~~~~~\mbox{if}~\widetilde{\theta}_i(t)< \alpha/2-\eta.
\end{array}%
\right.
\end{eqnarray}
Then it can be computed that
\begin{eqnarray}\label{lem1_2}
u_i(t)\in [-\eta+\delta_i(t),\eta-\delta_i(t)], ~~\forall 1\leq i\leq n, t\geq 0,
\end{eqnarray}
which means our choice of $(u_i(t),\delta_i(t))$ meets their requirements in Definition \ref{def_reach}. Define
$$\theta_{\max}(t):=\max_{1\leq i \leq n}\theta_i(t)~~~~\mbox{and}~~~~\theta_{\min}(t):=\min_{1\leq i \leq n}\theta_i(t).$$
If $\theta_{\max}(t)>\alpha/2+\eta/2$ we can get
\begin{eqnarray}\label{lem1_3}
\theta_{\max}(t+1)\leq \theta_{\max}(t)-\frac{\eta}{2}.
\end{eqnarray}
That is because if there exists $i\in [1,n]$ such that
\begin{eqnarray}\label{lem1_4}
\theta_i(t+1)>\theta_{\max}(t)-\frac{\eta}{2}>\frac{\alpha}{2},
\end{eqnarray}
by $\theta_i(t+1)=\widetilde{\theta}_i(t)+u_i(t)+b_i(t)$ and (\ref{lem1_1}) we have $\widetilde{\theta}_i(t)>\eta-\alpha/2$ and $u_i(t)+b_i(t)\in [-\eta,-\eta/2]$. But at the same time, by the definition of $\widetilde{\theta}_i(t)$ we have $\widetilde{\theta}_i(t)\leq \theta_{\max}(t)$, so
$$\theta_i(t+1)\leq\theta_{\max}(t)+u_i(t)+b_i(t)\leq \theta_{\max}(t)-\frac{\eta}{2},$$
which is contradictory with the first inequality of (\ref{lem1_4}).

Similar to (\ref{lem1_3}), we can get that if $\theta_{\min}(t)<-\alpha/2-\eta/2$ then
\begin{eqnarray}\label{lem1_5}
\theta_{\min}(t+1)\geq \theta_{\min}(t)+\frac{\eta}{2}.
\end{eqnarray}
Combining this with (\ref{lem1_3}) we have if  $\max_{1\leq i \leq n}|\theta_i(t)|>\alpha/2+\eta/2$ then
\begin{eqnarray}\label{lem1_6}
\max_{1\leq i \leq n}|\theta_i(t+1)|\leq \max_{1\leq i \leq n}|\theta_i(t)|-\frac{\eta}{2}.
\end{eqnarray}
Also, if $\max_{1\leq i \leq n}|\theta_i(t)|\leq \alpha/2+\eta/2$, by (\ref{lem1_1})
\begin{eqnarray}\label{lem1_7}
\max_{1\leq i \leq n}|\theta_i(t+1)|\leq \alpha/2.
\end{eqnarray}
Let $t_1:=\lceil \frac{2\pi-\alpha}{\eta}\rceil$.  By (\ref{lem1_6}), (\ref{lem1_7}) and with the fact of $\max_{1\leq i \leq n}|\theta_i(t)|\leq \pi$, we can get
\begin{eqnarray}\label{lem1_8}
\max_{1\leq i \leq n}|\theta_i(t_1)|\leq \alpha/2.
\end{eqnarray}
Combining (\ref{lem1_8}), (\ref{lem1_1}) and (\ref{lem1_2}) we have $ S_{\alpha}^1$ is robustly reached at time $t_1$ from any initial state under protocol (\ref{model2}).
\end{proof}

\begin{lem}\label{lem1_2}
Suppose that (A1) holds and there exists a constant $\eta>\frac{\pi}{2}-\frac{\pi}{n}$ satisfying (\ref{noise_cond_2}). Then for any $\alpha>0$,  $S_{\alpha}^1$ is finite-time robustly reachable from $ S^*$
under protocol (\ref{model2_new}).
\end{lem}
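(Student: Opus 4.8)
The plan is to adapt the proof of Lemma~\ref{lem1} to the $\mathrm{atan2}$ heading rule of protocol~(\ref{model2_new}); as there, assume without loss of generality that $\alpha\in(0,\eta]$. The only structural obstruction is that $\widetilde{\theta}_i(t)$ is now the direction of the weighted resultant $\sum_j f_{ij}(t)e^{\mathrm{i}\theta_j(t)}$ rather than an arithmetic mean, so it is no longer automatically caught between the smallest and the largest heading: if the headings of the neighbours of $i$ span a circular arc of length at least $\pi$, the resultant may point into the complementary arc, and the monotonicity $\widetilde{\theta}_i(t)\le\theta_{\max}(t)$ that made Lemma~\ref{lem1} easy can fail. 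The elementary fact that rescues the situation is: if all $n$ headings lie in a circular arc of length strictly less than $\pi$, then for every $i$ the resultant is a nonzero positive combination of unit vectors inside that arc, hence $\widetilde{\theta}_i(t)$ lies in the same arc. Accordingly the proof splits into two stages.

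Stage~2 (the endgame). Once a configuration with $d_{\theta(t)}<\pi$ is reached, by the fact above every subsequent $\widetilde{\theta}_i$ stays in the current occupied arc; after a bounded re‑centring phase (pull all agents toward the midpoint of the occupied arc and then toward $0$, neither move increasing $d_{\theta(t)}$) the occupied arc can be represented as an interval $[\theta_{\min},\theta_{\max}]\subset(-\pi,\pi)$ straddling $0$. From then on the three‑case feedback (\ref{lem1_1}) of Lemma~\ref{lem1} applies with only notational changes, forcing $\max_{1\le i\le n}|\theta_i(t)|$ to decrease by $\eta/2$ per step until it enters $[-\alpha/2,\alpha/2]$; thus $S_{\alpha}^1$ is reached within a bounded horizon, with all the $\delta_i(t)$ and $u_i(t)$ meeting the requirements of Definition~\ref{def_reach}.

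Stage~1 (reaching $d_{\theta(t)}<\pi$) is the new content, and the step I expect to be the real obstacle. Since $\theta(0)$ consists of $n$ points on the circle, there is a gap of length at least $2\pi/n$ between two cyclically consecutive headings, so $d_{\theta(0)}\le 2\pi(1-1/n)$. Working with the occupied arc $[c,c+d_{\theta(t)}]$ and its empty complement, I would choose at each step controls of the Lemma~\ref{lem1} type with $\delta_i(t)\equiv\varepsilon$ for a small fixed $\varepsilon$, designed to shrink the occupied arc: an agent whose neighbours span less than $\pi$ has $\widetilde{\theta}_i(t)$ inside the occupied arc and can be pushed strictly inward, whereas an agent whose neighbours span at least $\pi$ may have $\widetilde{\theta}_i(t)$ in the empty arc, but then only in the \emph{shorter} complementary arc between its extreme neighbour directions, hence never farther than $\pi$ from the midpoint of the occupied arc. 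The hypothesis $\eta>\tfrac{\pi}{2}-\tfrac{\pi}{n}$ should be exactly what, together with $d_{\theta(0)}\le 2\pi(1-1/n)$, guarantees that the outward excursions of such agents do not outpace the inward progress of the rest, so that a suitably chosen finite sequence of steps brings $d_{\theta(t)}$ below $\pi$; the uncertainties $b_i(t)$ enter only through an $O(\varepsilon)$ error absorbed by shrinking $\varepsilon$, in the spirit of Remark~\ref{rem_def_reach}, and concatenating the two stages yields constants $T,\varepsilon$ for Definition~\ref{def_reach} that are uniform over the initial state thanks to the a priori bound on $d_{\theta(0)}$. Making this contraction precise against the worst‑case weights $f_{ij}$ --- in particular checking that the ``good'' agents' progress dominates the ``bad'' ones' excursions, and pinning down that $\eta>\tfrac{\pi}{2}-\tfrac{\pi}{n}$ is the exact threshold --- is far more delicate than the one‑line monotonicity used in Lemma~\ref{lem1}, and may well require separate maneuvers for the empty and occupied arcs spread over several steps rather than a single pull toward the centre.
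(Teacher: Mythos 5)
Your Stage 2 is fine and matches the paper: once all headings lie in an arc of length less than $\pi$, the $\mathrm{atan2}$ resultant stays in that arc, and the three-case feedback of Lemma \ref{lem1} (first toward the arc's midpoint, then rotating the midpoint to $0$) finishes the job. The genuine gap is Stage 1, and you essentially concede it: you never give a mechanism for driving the spread from $2\pi(1-\tfrac1n)$ down below $\pi$, only the hope that $\eta>\tfrac{\pi}{2}-\tfrac{\pi}{n}$ ``should be exactly what'' makes a multi-step contraction work. That gradual-contraction plan faces exactly the obstruction you flag and do not overcome: when the neighbours of an agent span an arc of length greater than $\pi$, the weighted resultant can point in \emph{any} direction (your claim that it is confined to the shorter complementary arc is false for general nonnegative weights $f_{ij}(t)$, which moreover may change adversarially in time subject only to $f_{ii}(t)>0$ and the radius constraint), so there is no invariant ``occupied arc'' that your inward pushes provably shrink, and no argument that the good agents' progress dominates the bad agents' excursions.

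The paper avoids this entirely with a one-step move, and this is precisely where the hypothesis $\eta>\tfrac{\pi}{2}-\tfrac{\pi}{n}$ enters. Apply the pigeonhole bound not to $\theta(0)$ but to the $n$ numbers $\widetilde{\theta}_i(0)$ themselves: they fit in an arc of length $2\pi(1-\tfrac1n)$ with midpoint $\theta^*$, so each satisfies $|\widetilde{\theta}_i(0)-\theta^*|\le\pi-\tfrac{\pi}{n}$. With $\varepsilon_1:=\min\{\tfrac13(\eta-\tfrac{\pi}{2}+\tfrac{\pi}{n}),\tfrac{\pi}{8}\}$ choose $\delta_i(0)=\varepsilon_1$ and the proportional control $u_i(0)=\mp 2\varepsilon_1-\tfrac{n-2}{2(n-1)}(\widetilde{\theta}_i(0)-\theta^*)$ (sign according to the side of $\theta^*$). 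Its worst-case magnitude is $2\varepsilon_1+\tfrac{n-2}{2(n-1)}\,\pi(1-\tfrac1n)=2\varepsilon_1+\tfrac{\pi}{2}-\tfrac{\pi}{n}\le\eta-\varepsilon_1$, so it is admissible in the sense of Definition \ref{def_reach}, and since $\theta_i(1)=\widetilde{\theta}_i(0)+u_i(0)+b_i(0)$ one gets $|\theta_i(1)-\theta^*|\le\tfrac{\pi}{2}-\varepsilon_1$ for every $i$, i.e.\ spread strictly below $\pi$ after a single step, robustly in $b_i(0)$. In other words, the threshold $\tfrac{\pi}{2}-\tfrac{\pi}{n}$ is not the critical constant of some delicate multi-step domination argument; it is simply the worst-case control magnitude needed to compress an arc of half-length $\pi-\tfrac{\pi}{n}$ to half-length $\tfrac{\pi}{2}$ in one shot. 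Without this (or some substitute) your proof does not go through, and supplying it is the whole content of the lemma beyond Lemma \ref{lem1}.
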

\begin{proof}
Compared to the proof of Lemma \ref{lem1},  the biggest difference of this proof is to control the maximum heading difference less than $\pi$ at the beginning time.
In this proof the angle $a\in[b,c]$ means $a\mod 2\pi$ belongs to the set of the elements in $[b,c]$ module $2\pi$.  Compute $\widetilde{\theta}_i(0)$, $1\leq i \leq n$, and the length of the shortest interval which can cover them must be not bigger than $2\pi(1-\frac{1}{n})$. Let $\theta^*$ be the middle point of this interval, then $\widetilde{\theta}_i(0)$, $1\leq i \leq n$, are all in $[\theta^*-\pi(1-\frac{1}{n}),\theta^*+\pi(1-\frac{1}{n})]$. Set $\varepsilon_1:=\min\{\frac{1}{3}(\eta-\frac{\pi}{2}+\frac{\pi}{n}),\frac{\pi}{8}  \}$.
For $1\leq i \leq n$, we choose $\delta_i(0)=\varepsilon_1$ and
\begin{eqnarray*}
&&u_i(0)=\left\{%
\begin{array}{ll}
-2\varepsilon_1-\frac{n-2}{2(n-1)}(\widetilde{\theta}_i(0)-\theta^*)\\
~~~~~\mbox{if}~\widetilde{\theta}_i(0)-\theta^*\in [0,\pi(1-\frac{1}{n})],\nonumber\\
2\varepsilon_1-\frac{n-2}{2(n-1)}(\widetilde{\theta}_i(0)-\theta^*)\\
~~~~~\mbox{if}~\widetilde{\theta}_i(0)-\theta^*\in [-\pi(1-\frac{1}{n}),0).
\end{array}%
\right.
\end{eqnarray*}
From this we can compute for $1\leq i \leq n$,
\begin{eqnarray*}
\begin{aligned}
u_i(0)&\geq -2\varepsilon_1-\frac{n-2}{2(n-1)}\pi(1-\frac{1}{n})=-2\varepsilon_1-\frac{\pi}{2}+\frac{\pi}{n}\geq -\eta+\varepsilon_1,
\end{aligned}
\end{eqnarray*}
and similarly $u_i(0) \leq \eta-\varepsilon_1,$
which indicates the condition of  $u_i(0)\in [-\eta+\delta_i(0),\eta-\delta_i(0)]$ in Definition \ref{def_reach} is satisfied.
Also, we can get
\begin{eqnarray*}
\begin{aligned}
\widetilde{\theta}_i(0)+u_i(0)-\theta^*&\in \left[\min\{-2\varepsilon_1, 2\varepsilon_1-\frac{\pi}{2}\},\max\{2\varepsilon_1,-2\varepsilon_1+\frac{\pi}{2}\}\right]\\
&=\left[2\varepsilon_1-\frac{\pi}{2},-2\varepsilon_1+\frac{\pi}{2}\right]
\end{aligned}
\end{eqnarray*}
and so with (\ref{model2_new})
\begin{eqnarray*}
\begin{aligned}
\theta_i(1)-\theta^*\in [\varepsilon_1-\frac{\pi}{2},-\varepsilon_1+\frac{\pi}{2}],~~~~\forall 1\leq i\leq n,
\end{aligned}
\end{eqnarray*}
which indicates that
\begin{eqnarray*}
\widetilde{\theta}_i(1)-\theta^*\in [\varepsilon_1-\frac{\pi}{2},-\varepsilon_1+\frac{\pi}{2}],~~~~\forall 1\leq i\leq n.
\end{eqnarray*}

Next we control all the headings of the agents to the neighborhood of $\theta^*$.
Let $\varepsilon_2:=\min\{\frac{\pi}{8},\frac{\eta}{4},\frac{\alpha}{2}\}$ and set $t_1:=\lceil \frac{\frac{\pi}{2}-\eta+\varepsilon_2}{\eta-2\varepsilon_2}\rceil+2$.
For $1\leq t<t_1$ and $1\leq i \leq n$  we choose $\delta_i(t)=\varepsilon_2$ and
\begin{eqnarray}
u_i(t)=\left\{%
\begin{array}{ll}
-\eta+\varepsilon_2~~~\mbox{if}~\widetilde{\theta}_i(t)-\theta^*\in (\eta-\varepsilon_2,\frac{\pi}{2})\nonumber\\
\theta^*-\widetilde{\theta}_i(t)~\mbox{if}~\widetilde{\theta}_i(t)-\theta^*\in [\varepsilon_2-\eta,\eta-\varepsilon_2],\\
\eta-\varepsilon_2~~~~~~\mbox{if}~\widetilde{\theta}_i(t)-\theta^*\in (\frac{-\pi}{2},\varepsilon_2-\eta).
\end{array}%
\right.
\end{eqnarray}
With almost the  same process of (\ref{lem1_2})-(\ref{lem1_8})  we have
\begin{eqnarray*}
\theta_i(t_1)-\theta^*\in [-\varepsilon_2,\varepsilon_2],~~~~\forall 1\leq i\leq n.
\end{eqnarray*}

Finally we control all the headings of the agents to the neighborhood of $0$. Without loss of generality we assume $\theta^*\in [-\pi,0]$. For $t\geq t_1$ and $1\leq i \leq n$  we choose $\delta_i(t)=\varepsilon_2$ and
\begin{eqnarray}
u_i(t)=\left\{%
\begin{array}{ll}
\eta-\varepsilon_2~~~~~\mbox{if}~\widetilde{\theta}_i(t)\in [-\pi-\varepsilon_2,\varepsilon_2-\eta)\nonumber\\
-\widetilde{\theta}_i(t)~~~~\mbox{otherwise},
\end{array}%
\right.
\end{eqnarray}
and can get that $\theta_i(t_2)\in [-\varepsilon_2,\varepsilon_2]$, $1\leq i \leq n$ with $t_2:=\lceil\frac{\pi}{\eta-2\varepsilon_2}\rceil+t_1$.
\end{proof}

\begin{rem}
The condition of  $\eta>\frac{\pi}{2}-\frac{\pi}{n}$ in Lemma \ref{lem1_2} can be satisfied for any non-degenerate Gaussian white noise sequence. Further, we posit that Lemma \ref{lem1_2} also holds for any $\eta>0$. Strict proof of this does not come easily, however. In essence,
protocol (\ref{model2_new}) without the control input and parameter uncertainty is an isotropic system whose transition
from disordered to ordered state, called ``symmetry breaking" in physics,  is rather difficult to analyze compared to the anisotropic protocol (\ref{model2})-
so we add a lower bound to the span of the control input of protocol (\ref{model2_new})  to break the system's
symmetry.
\end{rem}

The following lemma describes a connection between the order parameter and  the maximum heading difference.
\begin{lem}\label{lem2}
For any $\varepsilon\in(0,1)$ and $\theta(t)\in[-\pi,\pi)^n$, if $d_{\theta(t)}\leq \arccos(1-\varepsilon)^2$ then the order function
$\varphi(t)\geq 1-\varepsilon$.
\end{lem}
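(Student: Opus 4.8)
The plan is to prove the purely geometric inequality $\varphi(t)\ge\cos\!\big(d_{\theta(t)}/2\big)$ and then to check that the hypothesis on $d_{\theta(t)}$ makes the right-hand side at least $1-\varepsilon$. For the inequality, fix $t$, write $d:=d_{\theta(t)}$, and use the definition of $d_\theta$ to choose a constant $c\in[-\pi,\pi)$ with $\theta_i(t)\in[c,c+d]$ for every $i$ (reading the interval modulo $2\pi$ in the wraparound case $c+d\ge\pi$). Put $u:=\big(\cos(c+\tfrac{d}{2}),\sin(c+\tfrac{d}{2})\big)$ and $e_i:=\big(\cos\theta_i(t),\sin\theta_i(t)\big)$. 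Since $d\in[0,2\pi)$ we have $\tfrac{d}{2}\in[0,\pi)$, so the angle between $e_i$ and $u$ lies in $[0,\tfrac{d}{2}]\subseteq[0,\pi)$, and monotonicity of $\cos$ on $[0,\pi]$ gives $e_i\cdot u\ge\cos\tfrac{d}{2}$ for all $i$. Hence, using $\|w\|_2\ge w\cdot u$ for the unit vector $u$ (Cauchy--Schwarz),
\[
\varphi(t)=\frac1n\Big\|\sum_{i=1}^{n}e_i\Big\|_2\ \ge\ \frac1n\Big(\sum_{i=1}^{n}e_i\Big)\cdot u\ =\ \frac1n\sum_{i=1}^{n}(e_i\cdot u)\ \ge\ \cos\frac{d}{2}.
\]

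To finish, observe that $\varepsilon\in(0,1)$ forces $1-\varepsilon\in(0,1)$ and hence $\arccos(1-\varepsilon)\in(0,\pi/2)$; in particular $\arccos(1-\varepsilon)<2$, so $\big(\arccos(1-\varepsilon)\big)^2<2\arccos(1-\varepsilon)$. Therefore the assumption $d_{\theta(t)}\le\big(\arccos(1-\varepsilon)\big)^2$ gives $\tfrac{d}{2}<\arccos(1-\varepsilon)<\pi/2$, and applying the decreasing function $\cos$ on $[0,\pi/2]$ yields $\cos\tfrac{d}{2}\ge\cos\big(\arccos(1-\varepsilon)\big)=1-\varepsilon$. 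Combined with the display above this gives $\varphi(t)\ge1-\varepsilon$, as claimed.

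The only step requiring genuine care --- the \emph{main obstacle}, such as it is --- is the angular bookkeeping: one must confirm that the covering interval from the definition of $d_\theta$ (including the wraparound clause) really does place every $e_i$ within angular distance $d/2$ of the bisector direction $u$, and that $d_{\theta(t)}<2\pi$ keeps $d/2$ inside $[0,\pi)$ so that cosine monotonicity applies; everything else is a one-line estimate. Equivalently, since $\varphi(t)$ is unchanged when all the $\theta_i(t)$ are rotated by a common angle, one may first rotate so that $c+\tfrac{d}{2}=0$, in which case $|\theta_i(t)|\le d/2$ and $\cos\theta_i(t)\ge\cos\tfrac{d}{2}$ directly, and one simply sums the first coordinates to reach the same bound.
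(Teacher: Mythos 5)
Your argument is correct, and it takes a genuinely different (and slightly sharper) route than the paper. The paper squares the order parameter and expands it into pairwise cosines, $\varphi(t)^2=\frac{1}{n^2}\sum_{i,j}\cos\bigl(\theta_i(t)-\theta_j(t)\bigr)\ge\cos d_{\theta(t)}$, and then takes a square root; you instead project the sum of unit vectors onto the bisector direction of the covering interval, which yields the cleaner intermediate bound $\varphi(t)\ge\cos\bigl(d_{\theta(t)}/2\bigr)$ (note $\cos(d/2)=\sqrt{(1+\cos d)/2}\ge\sqrt{\cos d}$, so your bound is at least as strong as the paper's), and your rotation-invariance remark disposes of the wraparound bookkeeping. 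One caveat: the hypothesis ``$d_{\theta(t)}\leq \arccos(1-\varepsilon)^2$'' is ambiguous, and the paper's own computation $\sqrt{\cos\bigl(\arccos(1-\varepsilon)^2\bigr)}=1-\varepsilon$ shows the intended reading is $\arccos\bigl((1-\varepsilon)^2\bigr)$, whereas you read it as $\bigl(\arccos(1-\varepsilon)\bigr)^2$, which is a strictly smaller threshold (for small $\varepsilon$ it is of order $\varepsilon$ rather than $\sqrt{\varepsilon}$), so as written your final paragraph proves a formally weaker lemma. The gap is cosmetic, however: under the intended hypothesis $\cos d_{\theta(t)}\ge(1-\varepsilon)^2$, and your key inequality gives $\varphi(t)\ge\cos\bigl(d_{\theta(t)}/2\bigr)\ge\sqrt{\cos d_{\theta(t)}}\ge 1-\varepsilon$, so replacing your closing arithmetic by this one line makes your proof cover the paper's statement in full.
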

\begin{proof}
By the definition of $\varphi(t)$ we have
\begin{eqnarray*}
\begin{aligned}
\varphi(t)&=\frac{1}{n}\big\|\left(\sum_{i=1}^n\cos\theta_i(t),\sum_{i=1}^n \sin\theta_i(t)  \right)\big\|_2\\
&=\frac{1}{n} \sqrt{\sum_{i,j}\cos\left[\theta_i(t)-\theta_j(t)  \right] }\\
&\geq \sqrt{\cos \left( \arccos(1-\varepsilon)^2 \right)}\geq 1-\varepsilon.
\end{aligned}
\end{eqnarray*}
\end{proof}

For any $\varepsilon>0$, define
\begin{eqnarray*}
S_{\varepsilon}^2:=\big\{(X,\theta)\in S^*: \frac{1}{n}\big\|\sum_{i=1}^n(\cos\theta_i,\sin\theta_i\big)\big\|_2\leq \varepsilon\big\}.
\end{eqnarray*}
Then $S_{\varepsilon}^2$ is a set of disordered states providing $\varepsilon$ close to $0$.
The following lemma describes a transition from ordered states to disordered states.
\begin{lem}\label{lem3}
Assume (A1) holds. Then
for any $\varepsilon>0$,  $ S_{\varepsilon}^2$ is finite-time robustly reachable from $ S_{\eta}^1$ under both protocols (\ref{model2_new}) and (\ref{model2}).
\end{lem}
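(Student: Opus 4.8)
The idea is to use the finiteness of the interaction radii to \emph{decouple} the agents and then steer each heading separately onto a prescribed target. Once $f_{ij}(t)=0$ for every $j\ne i$ we have $\widetilde\theta_i(t)=\theta_i(t)$, so the two protocols (\ref{model2_new}) and (\ref{model2}) coincide after decoupling; before it we use only that, as long as the headings lie in an arc of length $<\pi$, the value $\widetilde\theta_i(t)$ lies in that same arc, which holds for the linear average and for $\mathrm{atan2}$ alike. Fix target angles $\phi_1,\dots,\phi_n$, a generic small rotation of the $n$-th roots of unity, so that $\sum_{i=1}^n(\cos\phi_i,\sin\phi_i)=0$ and every $\phi_i$ is bounded away from $\pm\pi$. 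Since $\alpha\mapsto(\cos\alpha,\sin\alpha)$ is $1$-Lipschitz, any configuration with $\theta_i$ within circular distance $\rho$ of $\phi_i$ for all $i$ has $\tfrac1n\|\sum_i(\cos\theta_i,\sin\theta_i)\|_2\le\rho$; fix $\rho<\varepsilon$. Thus it suffices to drive $\theta(t)$ robustly into the $\rho$-neighbourhood of $(\phi_1,\dots,\phi_n)$ in finite time. We may assume $\eta$ small (a controller can ignore part of its authority, and when $\eta$ is large one first reaches a small ordered set $S_{\eta_0}^1$ from $S_\eta^1$ via Lemma~\ref{lem1}, resp.\ Lemma~\ref{lem1_2}) and the initial state not already in $S_\varepsilon^2$.

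\emph{Stage 1 (dispersal).} First contract the headings: with $\mu$ the midpoint of the current heading arc, the single move $\delta_i(0)=\delta$, $u_i(0)=\mu-\widetilde\theta_i(0)$ is admissible (as $|\mu-\widetilde\theta_i(0)|\le\eta/2$) and puts every heading into $[\mu-\delta,\mu+\delta]$ for all $b_i(0)$. Put $c_0:=\mu$ and fix equally spaced angles $\psi_1,\dots,\psi_n$ in $[c_0-q,c_0+q]$ with $q:=(\eta-3\delta)/2>0$; for $t\ge1$ take $\delta_i(t)=\delta$ and $u_i(t)=\psi_i-\widetilde\theta_i(t)$. The headings then remain inside $[c_0-q-\delta,c_0+q+\delta]$, hence $|u_i(t)|\le 2q+\delta=\eta-2\delta$ (admissible) and $\theta_i(t+1)\in[\psi_i-\delta,\psi_i+\delta]$ for every $b_i(t)$. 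So at each step agent $i$ moves in a direction within $\delta$ of the fixed angle $\psi_i$, and $|\psi_i-\psi_j|\ge 2q/(n-1)$ for $i\ne j$. Telescoping $X_i(t)-X_j(t)=(X_i(0)-X_j(0))+v\sum_{s=0}^{t-1}(\cos\theta_i(s+1),\sin\theta_i(s+1))$ gives $\|X_i(t)-X_j(t)\|_2\ge\kappa\,vt-\|X_i(0)-X_j(0)\|_2$ for some $\kappa=\kappa(q,n)>0$ once $\delta$ is small. Hence there is a finite $T_1$, depending on $X(0),r_i,v,n,\eta$, with every pairwise distance at time $T_1$ exceeding $\max_j r_j$ by a margin large enough to survive Stage 2; in particular $f_{ij}(T_1)=0$ for $i\ne j$.

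\emph{Stage 2 (alignment).} For $t\ge T_1$ the agents are decoupled, so $\theta_i(t+1)=\theta_i(t)+u_i(t)+b_i(t)$ and the headings evolve independently. Apply the clipping device of the proof of Lemma~\ref{lem1} coordinatewise: if the circular distance from $\theta_i(t)$ to $\phi_i$ is $\le\eta-\delta$, pick $u_i(t)$ to close it exactly, so $\theta_i(t+1)$ lands within $\delta$ of $\phi_i$; otherwise set $u_i(t)=\pm(\eta-\delta)$ towards $\phi_i$, shortening the distance by at least $\eta-2\delta$. Since this distance never exceeds $\pi$, after $T_2:=\lceil\pi/(\eta-2\delta)\rceil+1$ steps every $\theta_i$ is within $\delta<\rho<\varepsilon$ of $\phi_i$ for all $b_i(\cdot)$, and the Stage 1 margin keeps the agents decoupled during those $T_2$ steps. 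Thus $(X(T_1+T_2),\theta(T_1+T_2))\in S_\varepsilon^2$, which is the claim with $T=T_1+T_2$ in Definition~\ref{def_reach}; nothing used the specific form of $\widetilde\theta_i$, so the argument applies to both protocols, and choosing $\varepsilon$ in Definition~\ref{def_reach} small relative to $1/T_1$ makes the perturbations $b_i$ harmless.

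\emph{Main obstacle.} The delicate part is Stage 1. The weights $f_{ij}(t)$ may be arbitrary (only $f_{ii}>0$ and the range constraint are imposed); for the ``complete averaging'' choice every $\widetilde\theta_i(t)$ equals the common mean, so the heading spread produced in one step can never exceed $2\eta$ and the configuration would stay ordered forever if the agents were never dispersed — so dispersal is genuinely required, and it is exactly the finiteness of $r_i$ that permits it. One must verify that the residual $2q>0$ of per-step heading freedom, used through a \emph{fixed} fan assignment, forces the pairwise distances to grow linearly against \emph{every} admissible $f$, while the heading arc is kept short enough that $\mathrm{atan2}$ and reduction mod $2\pi$ cause no trouble and the fan centre does not drift. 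The remaining items — admissibility of the controls and the bookkeeping of $b_i$ — are routine.
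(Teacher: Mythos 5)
Your proposal is essentially correct for the lemma as literally quantified in Definition~\ref{def_reach}, but it takes a genuinely different route from the paper. The paper sorts the agents by the coordinate $x_{i2}(0)$ and splits them into two blocks ($n$ even) or three blocks ($n$ odd), steers the upper block's headings into $[\eta/4,\eta/2]$ and the lower block's into $[-\eta/2,-\eta/4]$ so that the inter-block vertical gap grows by at least $2v\sin(\eta/4)$ per step and the blocks are disconnected after only $\lfloor r_{\max}/(2v\sin(\eta/4))\rfloor+1$ steps; it then drives the block headings to $\pm\pi/2$ (resp.\ to $\pm c_n$ and $0$ with $c_n=\frac{\pi}{2}+\arcsin\frac{1}{n-1}$ for odd $n$) so that the unit vectors cancel and $\varphi\le\varepsilon$. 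You instead decouple \emph{all} pairs by a fan of $n$ distinct headings and then steer each heading independently to a rotated $n$-th root of unity; your Stage~2, the admissibility bookkeeping, the ``average stays in a short arc'' facts for both protocols, and the reduction of large $\eta$ via Lemma~\ref{lem1} (resp.\ Lemma~\ref{lem1_2}) all match the paper's own devices and are fine.

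The one substantive difference to be aware of: your dispersal time $T_1$ unavoidably depends on $X(0)$. With a fixed fan assignment, two agents that start far apart may be sent toward each other, and one must wait a number of steps proportional to their initial distance before \emph{every} pairwise distance exceeds $r_{\max}$ plus the Stage~2 margin; hence no $T$ uniform over $S_{\eta}^1$ exists for your strategy. Under the literal statement of Definition~\ref{def_reach} ($T$ may depend on the initial state) this still proves Lemma~\ref{lem3}, but the paper's construction gives a time depending only on $n$, $r_{\max}$, $v$, $\eta$, $\varepsilon$ — precisely because the blocks are ordered by $x_{i2}$ at the start, so the relevant gaps are nonnegative and monotonically increasing from time $0$ — and it is this uniformity that the proof of Lemma~\ref{robust} extracts and that makes the constants $c$ and $T$ in Theorem~\ref{result_1} depend only on $n$, $r_{\max}$, $\eta$, $v$ and $\underline{\rho}$. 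If you want your argument to serve that downstream purpose, replace the fixed fan by the paper's two/three-block separation (full pairwise decoupling is more than is needed: cancellation of the order parameter only requires two, or three, balanced direction classes), or otherwise adapt the fan assignment to the initial geometry so that the separation time is uniform.
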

To outline the proof of Lemma  \ref{lem3}, we first divided the agents into different sets, then we controlled the agents' headings in different sets to have a certain amount of disparity, breaking all the communications between different sets after a finite time. Next, we controlled the headings in each set to a designed angle so that the order parameter of the system became very small. For detailed proof, see Appendix \ref{App_lem3}.

We assert through the following theorem that the order parameter will switch an infinite number of times between very large and very small. Please note that the large order parameter indicates ordered states, and the small order parameter indicates disordered states.

\begin{thm}\label{result_1}
Assume (A1) holds and let $\varepsilon\in(0,1)$ be a constant arbitrarily given. Then for System II (or System I with $\eta>\frac{\pi}{2}-\frac{\pi}{n}$), with probability $1$ there exists an infinite time sequence $t_1<t_2<\cdots$  such that
 \begin{eqnarray*}
\varphi(t_i)\left\{%
\begin{array}{ll}
 \geq 1-\varepsilon~~~~~\mbox{if $i$ is odd},\\
\leq \varepsilon~~~~~~~~~~~~\mbox{if $i$ is  even}.
\end{array}%
\right.
\end{eqnarray*}
Moreover, let $\tau_0=0$ and $\tau_i$ denote the stopping time as
 \begin{eqnarray*}
\tau_i=\left\{%
\begin{array}{ll}
\min\{t>\tau_{i-1}:\varphi(t)\geq 1-\varepsilon\}~~~~~\mbox{if $i$ is odd}\\
\min\{t>\tau_{i-1}:\varphi(t)\leq \varepsilon\}~~~~~~~~~~~~\mbox{if $i$ is  even}
\end{array}%
\right.
\end{eqnarray*}
for $i\geq 1$, then for all $k\geq 0$ and $t\geq 0$,
\begin{eqnarray}\label{theo1_0}
\begin{aligned}
P\left(\tau_{2k+2}-\tau_{2k}> t \right) \leq (1-c)^{\lfloor t/T\rfloor},
\end{aligned}
\end{eqnarray}
where $c\in (0,1)$ and $T>0$ are constants depending on  $n, r_{\max},\eta, v$ and $\underline{\rho}$ only.
\end{thm}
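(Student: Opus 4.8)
The plan is to reduce the theorem to Lemma~\ref{robust} applied with $k=2$. Put $\alpha:=\arccos\bigl((1-\varepsilon)^2\bigr)\in\bigl(0,\frac{\pi}{2}\bigr)$ and take as the two target sets the ``ordered'' set $S_1:=S_\alpha^1$ and the ``disordered'' set $S_2:=S_\varepsilon^2$. Every $(X,\theta)\in S_\alpha^1$ has $d_\theta\le\alpha$, so Lemma~\ref{lem2} gives $\varphi\ge 1-\varepsilon$ throughout $S_1$, while $\varphi\le\varepsilon$ throughout $S_2$ by the very definition of $S_\varepsilon^2$. Hence it is enough to produce, with the claimed probabilities and return-time bound, an infinite visiting sequence alternating between $S_1$ and $S_2$.

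First I would check that $S_1$ and $S_2$ are each finite-time robustly reachable from $S^*$. For $S_1=S_\alpha^1$ this is exactly Lemma~\ref{lem1} under protocol~(\ref{model2}) (system~II) and Lemma~\ref{lem1_2} under protocol~(\ref{model2_new}) (system~I, whose hypothesis $\eta>\frac{\pi}{2}-\frac{\pi}{n}$ is precisely what Lemma~\ref{lem1_2} requires). For $S_2=S_\varepsilon^2$, Lemma~\ref{lem3} gives reachability only from $S_\eta^1$, so I would first record the routine fact that finite-time robust reachability is transitive: concatenating a control/uncertainty-bound strategy that drives $S^*$ into a set $A$ within time $T_A$ with one that drives $A$ into $B$ within time $T_B$ --- restarting the latter at the first time $\le T_A$ the trajectory enters $A$, which is legitimate because the dynamics and the noise lower bound~(\ref{noise_cond_2}) are unchanged under shifting the initial time --- yields a strategy driving $S^*$ into $B$ within time $T_A+T_B$ with parameter $\min\{\varepsilon_A,\varepsilon_B\}$. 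Composing Lemma~\ref{lem3} with the case $\alpha=\eta$ of Lemma~\ref{lem1}/\ref{lem1_2} (which makes $S_\eta^1$ reachable from $S^*$) then shows $S_\varepsilon^2$ is finite-time robustly reachable from $S^*$.

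Now both hypotheses of Lemma~\ref{robust} hold for $k=2$. Part~(i) gives, with probability~$1$, an infinite sequence $t_1<t_2<\cdots$ with $S_1$ reached at $t_{2l+1}$ and $S_2$ at $t_{2l+2}$ for all $l\ge0$, which together with the two containments above is the first assertion. For~(\ref{theo1_0}) I would re-run the block estimate from the proof of Lemma~\ref{robust}(ii): set $T:=T_1+T_2$ and let $E_t$ be the event that $S_1$ is reached in $[t,t+T_1-1]$ and then $S_2$ in $[t+T_1,t+T-1]$; the $k=2$ instance of~(\ref{rob_2}) gives $P(E_t\mid w_{t-1})\ge c$ for every $t$ and every $w_{t-1}\in\Omega^{t-1}$. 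If $E_{\tau_{2k}+1+mT}$ is the first of the $T$-separated events $E_{\tau_{2k}+1},E_{\tau_{2k}+1+T},\dots$ to occur, then inside it $S_1$ is hit at some $s_1\in(\tau_{2k},\tau_{2k}+mT+T_1]$ and $S_2$ at some $s_2\in(s_1,\tau_{2k}+(m+1)T]$, so the definitions of the stopping times force $\tau_{2k+1}\le s_1$ and $\tau_{2k+2}\le s_2\le\tau_{2k}+(m+1)T$; hence $\{\tau_{2k+2}-\tau_{2k}>MT\}\subseteq\bigcap_{m=0}^{M-1}E^c_{\tau_{2k}+1+mT}$, and decomposing over $\{\tau_{2k}=s\}$ and chaining with Bayes' theorem exactly as in~(\ref{rob_3})--(\ref{rob_4}) yields $P(\tau_{2k+2}-\tau_{2k}>MT)\le(1-c)^M$, whence~(\ref{theo1_0}) follows since $P(\tau_{2k+2}-\tau_{2k}>t)\le P(\tau_{2k+2}-\tau_{2k}>\lfloor t/T\rfloor T)$. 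The constants $c,T$ are inherited from Lemmas~\ref{lem1}/\ref{lem1_2}/\ref{lem3}, so they depend only on $n,r_{\max},\eta,v,\underline{\rho}$ and the fixed $\varepsilon$.

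The substantive work all sits in Lemmas~\ref{lem1}, \ref{lem1_2}, \ref{lem2}, \ref{lem3} and~\ref{robust}, so for the theorem itself the only delicate points are the transitivity/restart step used to make $S_\varepsilon^2$ reachable from $S^*$, and the passage from the set-hitting times $s_1,s_2$ to the $\varphi$-defined stopping times $\tau_i$. The latter works precisely because $E_t$ forces $S_1$ to be hit \emph{strictly before} $S_2$ and because $S_\alpha^1\subseteq\{\varphi\ge1-\varepsilon\}$ while $S_\varepsilon^2=\{\varphi\le\varepsilon\}$, so each of $s_1,s_2$ is automatically an ordered resp.\ disordered instant; I expect the mildly fiddliest part to be applying the block estimate at the random time $\tau_{2k}$ rather than a deterministic time, which is handled by the total-probability decomposition over $\{\tau_{2k}=s\}$ verbatim as in Lemma~\ref{robust}(ii).
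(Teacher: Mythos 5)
Your proposal is correct and follows essentially the same route as the paper: the paper also reduces to Lemma \ref{robust} with $k=2$, obtaining the ordered target from Lemmas \ref{lem1}/\ref{lem1_2} together with Lemma \ref{lem2} (it uses $\overline{S}_{\varepsilon}=\{\varphi\ge 1-\varepsilon\}$ where you use $S_{\alpha}^1\subseteq\overline{S}_{\varepsilon}$) and the disordered target by composing Lemma \ref{lem1}/\ref{lem1_2} with Lemma \ref{lem3}. Your explicit treatment of the transitivity/restart step and of passing from the set-hitting times to the $\varphi$-defined stopping times merely spells out details the paper compresses into its one-line citation of Lemma \ref{robust}.
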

\begin{proof}
First by Lemmas \ref{lem1} (or \ref{lem1_2}) and \ref{lem3} we can get  $ S_{\varepsilon}^2$ is finite-time robustly reachable from any initial state.
Also, define
\begin{eqnarray*}
\overline{ S}_{\varepsilon}:=\big\{(X,\theta)\in S^*: \frac{1}{n}\big\|\sum_{i=1}^n(\cos\theta_i,\sin\theta_i\big)\big\|\geq 1-\varepsilon\big\}.
\end{eqnarray*}
By Lemmas \ref{lem1} (or \ref{lem1_2}) and \ref{lem2} we have $\overline{ S}_{\varepsilon}$ is also finite-time robustly reachable for any initial state.
Using Lemma \ref{robust} our results can be obtained by taking $ S_1=\overline{ S}_{\varepsilon}$ and $ S_2= S_{\varepsilon}^2$.
\end{proof}

\begin{rem}
Compared to System II, the results for System I in Theorem \ref{result_1} (and also in Theorems \ref{result_2}, \ref{turn_1}, \ref{vortex_1}, \ref{result_3}, \ref{result_4}, \ref{turn_2} and \ref{vortex_2}) contain a condition $\eta>\frac{\pi}{2}-\frac{\pi}{n}$.
 This difference is a direct result of the difference between Lemmas \ref{lem1} and \ref{lem1_2}.
In fact, the condition $\eta>\frac{\pi}{2}-\frac{\pi}{n}$ for System I can be relaxed to $\eta>0$ under an assumption (Theorem \ref{relt_assmp}).
This assumption includes the assumption used by  Liu and Guo \cite{liu2009b}, who considered a consensus problem
for the original Vicsek model without noise.
\end{rem}

For any $\alpha>0$, similar to $S_{\alpha}^1$ we set
$$ S_{\alpha}^3:=\left\{(X,\theta)\in S^*: d_{\theta}<\alpha\right\}.$$
Differing from $S_{\alpha}^1$, $S_{\alpha}^3$ may not take the zero as its center angle.
Without any additional condition Systems I and II can reach a disordered state from an ordered state:
\begin{lem}\label{lem3_new}
Assume (A1) holds, then $( S_{\pi}^3)^c$ is finite-time robustly reachable from $S_{\pi}^3$ under both protocol  (\ref{model2_new}) and (\ref{model2}).
\end{lem}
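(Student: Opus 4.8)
Since $S_\pi^3$ and its complement are determined by the headings alone, the task is to steer the headings until they span an arc of length at least $\pi$, and it plainly suffices to force two agents to become (almost) antipodal. (When $\eta$ is very large this needs no work --- a single step of either protocol, with small $\delta_i(0)$, already places $\theta_1(1)$ and $\theta_2(1)$ near any prescribed values --- so assume $\eta$ is not that large.) The structural fact that shapes the proof is that there is \emph{no} positive lower bound on the self-weights $f_{ii}(t)/\sum_j f_{ij}(t)$: an agent can be almost entirely enslaved to its neighbours, so its heading cannot be pulled away from theirs by more than about $\eta$ as long as the interaction links persist. Hence the heading spread cannot be pushed past a half circle while the graph keeps the agents linked together, and the plan must first destroy connectivity through the \emph{positions} and only then re-point a single agent.

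Given $(X(0),\theta(0))\in S_\pi^3$, let $\psi_0$ be the midpoint of the shortest arc covering $\theta(0)$ and fix a small $\alpha_0$. \emph{Phase A:} run the heading contraction of the proof of Lemma~\ref{lem1}, but aimed at $\psi_0$ rather than at $0$; the identical computation brings every heading into $[\psi_0-\alpha_0,\psi_0+\alpha_0]$ within a number of steps depending only on $\eta$, and it is valid for protocol~(\ref{model2_new}) as well, because whenever all headings lie in an arc of length less than $\pi$ the vector $\sum_j f_{ij}(\cos\theta_j,\sin\theta_j)$ has its argument in that same arc, so $\widetilde\theta_i(t)$ stays between the two extreme headings and the covering arc only shrinks (which, incidentally, is why the present lemma needs no lower bound on $\eta$, unlike Lemma~\ref{lem1_2}). \emph{Phase B:} set $\phi^+:=\psi_0+\tfrac\pi2+\tfrac\eta4$ and pick the agent $m$ maximising $X_m\cdot(\cos\phi^+,\sin\phi^+)$; hold every $j\ne m$ in $[\psi_0-\alpha_0,\psi_0+\alpha_0]$ by the cancelling input $u_j(t)=\psi_0-\widetilde\theta_j(t)$ (admissible because $\widetilde\theta_j(t)$ always lies in the arc spanned by $j$'s neighbours' headings, hence inside the range where Lemma~\ref{lem1}'s middle case applies), while choosing $u_m(t)$ so that $\theta_m(t)$ stays near $\psi_0+\tfrac\eta2$ --- capping this offset at $\pi/4$ if $\eta$ is large, so that the headings relevant to each $\mathrm{atan2}$ still lie in a sub-$\pi$ arc. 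Then the velocity of $m$ relative to each $j$ has length bounded below by a fixed $\rho_0=\rho_0(v,\eta)>0$ and direction within $\alpha_0$ of $\phi^+$, so $\bigl(X_m(t)-X_j(t)\bigr)\cdot(\cos\phi^+,\sin\phi^+)$ increases by a fixed positive amount each step, starting from a value $\ge 0$ by the choice of $m$; hence after a number of steps depending only on $r_{\max},v,\eta$ we have $\|X_m(t)-X_j(t)\|>r_{\max}$, i.e.\ $f_{mj}(t)=f_{jm}(t)=0$ for every $j$, and $m$ is isolated. \emph{Phase C:} now $\widetilde\theta_m(t)=\theta_m(t)$, so $\theta_m$ is freely steerable; drive it to $[\psi_0+\pi-\alpha_0,\psi_0+\pi)$ in $O(\pi/\eta)$ steps while the others remain near $\psi_0$ (their averages no longer see $m$), having run Phase~B long enough that $m$ stays out of range. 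At that time the headings contain values near $\psi_0$ and near $\psi_0+\pi$, so $d_{\theta(t)}\ge\pi$ and $(S_\pi^3)^c$ is reached. Every heading target above is met up to the $\pm\delta_i(t)$ slack, so choosing $\delta_i(t)\equiv\varepsilon$ small absorbs all adversarial $b_i(t)$, and the total horizon $T$ depends only on $n,r_{\max},\eta,v,\underline\rho$, as Definition~\ref{def_reach} demands.

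The step I expect to be the main obstacle --- and the reason the positions have to enter at all --- is keeping $T$ \emph{uniform} over the unbounded position space $S_\pi^3$. Merely ``pushing one agent out of its connected cluster'' does not suffice: when the underlying graph is disconnected the relevant cluster can have arbitrarily large diameter, and the escape time, being of order $\mathrm{diameter}/(v\eta)$, would grow without bound. The extremal choice of $m$ in the \emph{fixed} direction $\phi^+$ is what repairs this --- it makes $m$ recede from \emph{every} other agent at a fixed positive rate no matter how the agents are laid out, so the escape time no longer sees the diameter. Everything else --- the cone/rate estimates for the positions, verifying $u_i(t)\in[-\eta+\delta_i(t),\eta-\delta_i(t)]$ at each step, and checking that the sub-$\pi$ arc condition is preserved throughout --- is routine and word-for-word the same for the two protocols.
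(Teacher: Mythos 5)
Your overall architecture mirrors the paper's: use the hypothesis $d_{\theta(0)}<\pi$ to run a Lemma~\ref{lem1}-type contraction valid for both protocols (the atan2 average stays inside the sub-$\pi$ covering arc), then exploit the \emph{positions} to break interaction links at a rate uniform over the unbounded state space, and only afterwards re-point headings of the disconnected pieces. Up to Phase B your estimates are sound. The genuine gap is in the terminal configuration of Phase C. A configuration consisting of one tight heading cluster plus one isolated agent can never robustly lie in $(S_\pi^3)^c$: for two heading values (or one value and a small cluster) the shortest covering interval has length at most $\pi$, with equality only in the exactly antipodal case. Your stated target already fails without any adversarial help --- if every $j\neq m$ happens to sit exactly at $\psi_0$ (which your cancelling input permits, since it only guarantees $\theta_j\in[\psi_0-\delta,\psi_0+\delta]$) and $\theta_m=\psi_0+\pi-\alpha_0$, then $d_\theta=\pi-\alpha_0<\pi$. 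Aiming $m$ exactly at $\psi_0+\pi$ does not repair it either: the uncertainty $b_m(t)\in[-\delta_m(t),\delta_m(t)]$ is applied \emph{after} your choice of $u_m(t)$, and Definition~\ref{def_reach} demands that the target set be reached for \emph{arbitrary} $b_i(t)$; the adversary can always knock the pair off exact antipodality (e.g.\ set $b_j\equiv 0$ for the cluster and $b_m=\delta$), leaving $d_\theta\leq\pi-\delta<\pi$. So the sentence ``it plainly suffices to force two agents to become (almost) antipodal'' is exactly the false step: almost antipodal gives $d_\theta<\pi$, and exactly antipodal is a measure-zero configuration that cannot be enforced robustly.

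The paper circumvents this by producing \emph{three} mutually disconnected groups (sorted by the second coordinate, separated by giving them slightly different heading targets of order $\eta$ for a time of order $r_{\max}/(v\eta)$) and then steering their headings to $3\pi/4$, $-3\pi/4$ and $0$; the resulting spread is about $5\pi/4$ minus an $O(\eta)$ error, so $d_\theta>\pi$ holds with a margin that absorbs all the $b_i(t)$ slack. Your argument is repairable in the same spirit --- keep your extremal-agent isolation, but additionally split the remaining flock's headings into two sub-targets $\psi_0\pm\beta$ with $\beta$ strictly larger than the slack $\delta$ (admissible since $\beta$ can be taken below $\eta$ even while that flock stays connected), or peel off a second agent so that three well-separated heading values are guaranteed; note this, like the paper's proof, implicitly requires three agents' worth of distinct heading values. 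As written, however, Phase C does not reach $(S_\pi^3)^c$ and the proof is incomplete.
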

The proof of this lemma is put in Appendix \ref{App_lem3_new}.

The following theorem says for any initial sate and system parameters the disordered states are still reached an infinite number of times:
\begin{thm}\label{result_org_1}
Assume (A1) holds. Then for System I (or II), with probability $1$ there exists an infinite time sequence $t_1<t_2<\cdots$  such that $d_{\theta(t_i)}\geq\pi$ for all $i\geq 1$;
moreover, let $\tau_0=0$ and $\tau_{i+1}$ denote the stopping time as
 \begin{eqnarray*}
\tau_{i+1}:=\min\{t>\tau_{i}:d_{\theta(t)}\geq \pi\},
\end{eqnarray*}
then for all $i\geq 1$ and $t\geq 0$,
\begin{eqnarray}\label{theo1_0}
\begin{aligned}
P\left(\tau_{i}-\tau_{i-1}> t \right)\leq (1-c)^{\lfloor t/T\rfloor},
\end{aligned}
\end{eqnarray}
where $c\in (0,1)$ and $T>0$ are constants depending on $n, r_{\max},\eta, v$ and  $\underline{\rho}$ only.
\end{thm}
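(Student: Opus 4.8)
The plan is to derive Theorem \ref{result_org_1} as an immediate consequence of Lemma \ref{lem3_new} and Corollary \ref{robust2}. Set $S:=(S_{\pi}^3)^c$, so that $S^c=S_{\pi}^3$ and, by the definition of $S_{\pi}^3=\{(X,\theta)\in S^*:d_{\theta}<\pi\}$, the membership $(X(t),\theta(t))\in S$ is exactly equivalent to $d_{\theta(t)}\geq\pi$. Lemma \ref{lem3_new} states precisely that $S=(S_{\pi}^3)^c$ is finite-time robustly reachable from $S_{\pi}^3=S^c$ under protocol (\ref{model2_new}) (or (\ref{model2})), which is the hypothesis needed to invoke Corollary \ref{robust2}. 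This handles system I as well as system II and requires no restriction on $\eta$, since Lemma \ref{lem3_new} holds for both protocols unconditionally; this is the reason Theorem \ref{result_org_1} needs none of the extra condition $\eta>\frac{\pi}{2}-\frac{\pi}{n}$ that Theorem \ref{result_1} inherited from Lemma \ref{lem1_2} when aiming for the ordered state of system I.

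Applying Corollary \ref{robust2}(i) then gives that, with probability $1$, $S$ is reached infinitely often; listing the successive hitting times yields an infinite sequence $t_1<t_2<\cdots$ with $(X(t_i),\theta(t_i))\in S$, i.e. $d_{\theta(t_i)}\geq\pi$ for all $i\geq1$. For the quantitative claim, the stopping times $\tau_i=\min\{t>\tau_{i-1}:(X(t),\theta(t))\in S\}$ appearing in Corollary \ref{robust2} coincide with the $\tau_{i}=\min\{t>\tau_{i-1}:d_{\theta(t)}\geq\pi\}$ of the theorem, and Corollary \ref{robust2}(ii) furnishes constants $T>0$ and $c_0\in(0,1)$ with $P(\tau_i-\tau_{i-1}>t)\leq c_0^{\lfloor t/T\rfloor}$; tracing the constant back through the proof of Lemma \ref{robust}, where it is $\underline{\rho}^{\,T-1}(2\varepsilon)^{n(T-1)}$ and the tail bound is stated in the form $(1-c)^{\lfloor t/T\rfloor}$, gives (\ref{theo1_0}) directly after the obvious relabelling.

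The only point requiring care is the assertion that $c$ and $T$ depend on $n,r_{\max},\eta,v$ and $\underline{\rho}$ only. This needs the robust reachability in Lemma \ref{lem3_new} to be \emph{uniform} over the initial states in $S_{\pi}^3$, i.e. the horizon $T$ and the margin $\varepsilon$ produced by the Appendix construction must be bounded in terms of these five parameters alone, independently of $X(0)$ and $\theta(0)$. One reads this off the construction: the number of steps used to first drive the agents into groups that no longer communicate is controlled by the separation distance required (hence by $r_{\max}$ and $v$) and by $n$; the subsequent steps aligning the headings inside each group are controlled by $\eta$ and $n$; the margin $\varepsilon$ is taken to be a fixed fraction of $\eta$; and $\underline{\rho}$ enters only through Lemma \ref{robust}'s formula for $c$. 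I expect this bookkeeping to be the sole obstacle — the structural argument is otherwise immediate once Lemma \ref{lem3_new} is available.
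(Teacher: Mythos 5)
Your proposal is correct and follows exactly the paper's route: the paper proves Theorem \ref{result_org_1} precisely by combining Lemma \ref{lem3_new} (robust reachability of $(S_{\pi}^3)^c$ from $S_{\pi}^3$ for both protocols, with no restriction on $\eta$) with Corollary \ref{robust2}, which you have merely spelled out in more detail, including the uniformity of the constants that the paper leaves implicit.
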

\begin{proof}
Immediate from Corollary \ref{robust2} and Lemma \ref{lem3_new}.
\end{proof}

The possible applications and significance of Theorems \ref{result_1} and \ref{result_org_1} are provided in Section \ref{mrs_2} together with the corresponding results under periodic boundary conditions.

As mentioned in the Subsection \ref{Subsection_def2}, the robust consensus has been interested by many researchers \cite{Wang2009,Shi2013,Tian2009,Munz2011,Khoo2009,CY2011}. We also give a result for the robust consensus:
\begin{cor}\label{cor_1}
Assume (A1) holds,
then the robust consensus cannot be achieved for both Systems I and II.
\end{cor}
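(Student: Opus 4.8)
The plan is to obtain the corollary directly from Theorem \ref{result_org_1} by contradicting Definition \ref{def_robust_consensus}. First I would fix an arbitrary candidate gauge function $g(\cdot)$ with $\lim_{x\to 0^+} g(x)=0$ and show it cannot serve in the definition of robust consensus. Since $\lim_{x\to 0^+} g(x)=0$, there is an $\eta_0>0$ with $g(\eta_0)<\pi$. I would then consider system I (or II) driven by a noise whose intensity parameter equals this $\eta_0$ — for instance the i.i.d.\ uniform noise on $[-\eta_0,\eta_0]$, or a non-degenerate zero-mean Gaussian white noise, both of which satisfy (\ref{noise_cond_2}).

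Next I would invoke Theorem \ref{result_org_1} for that system: with probability $1$ there exists an infinite time sequence $t_1<t_2<\cdots$ along which $d_{\theta(t_i)}\ge \pi$. Consequently the event $\{\limsup_{t\to\infty} d_{\theta(t)}\ge \pi\}$ has probability $1$, so it is in particular nonempty; pick any $\omega\in\Omega^{\infty}$ in it. For this $\omega$ we get $\limsup_{t\to\infty} d_{\theta(t)}\ge \pi > g(\eta_0)$, which violates the requirement ``$\limsup_{t\to\infty} d_{\theta(t)}\le g(\eta)$ for all $\eta>0$ and $\omega\in\Omega^{\infty}$'' in Definition \ref{def_robust_consensus}. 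Since $g$ was arbitrary, no admissible gauge function exists, hence robust consensus is not achieved — for system I and for system II alike.

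The argument requires essentially no new work once Theorem \ref{result_org_1} is available; the only point deserving care is the quantifier structure in the definition of robust consensus. One must not merely exhibit a single noise level at which disorder recurs — one must show that whatever decay rate $g$ is proposed, it is already beaten at some small noise level, and this is exactly what the $\eta$-uniformity of Theorem \ref{result_org_1} supplies: its constants $c,T$ depend on $\eta$, but the conclusion ``$d_{\theta(t)}\ge \pi$ infinitely often, a.s.'' holds for \emph{every} $\eta>0$. A secondary and trivial remark is that $d_{\theta(t)}$ is always at most $2\pi(1-1/n)$, so the $\limsup$ is finite and its comparison with $g(\eta_0)$ is meaningful. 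I do not foresee any real obstacle here; the substance of the statement lies entirely in Theorem \ref{result_org_1}, which in turn rests on Corollary \ref{robust2} and Lemma \ref{lem3_new}.
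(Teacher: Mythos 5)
Your argument is correct and is exactly the route the paper takes: its proof of Corollary \ref{cor_1} simply declares the result immediate from Definition \ref{def_robust_consensus} and Theorem \ref{result_org_1}, and your write-up just spells out that deduction (choosing $\eta_0$ with $g(\eta_0)<\pi$ and using that the theorem holds for every noise level $\eta>0$). Nothing further is needed.
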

\begin{proof}
Immediate from Definition \ref{def_robust_consensus} and Theorem \ref{result_org_1}.
\end{proof}

Jadbabaie \emph{et al.} \cite{Jad1} analyzed System II without noise and found that to understand the effects of additive noise, one must focus on how noise affects the connectivity of the associated neighbor graphs. Later, Tahbaz-Salehi and Jadbabaie \cite{Jad2007} investigated the original Vicsek model without noise and found that the neighbor graphs are jointly connected over infinitely many time intervals for almost all initial states under periodic boundary conditions. The following Theorem provides an answer to how noise affects the connectivity under the open boundary conditions:
\begin{thm}\label{result_2}
Assume (A1) holds. Then
for System II (or System I with $\eta>\frac{\pi}{2}-\frac{\pi}{n}$),
$\{G(t)\}_{t=0}^{\infty}$ is not uniformly jointly weakly connected with probability $1$.
\end{thm}
The proof of this theorem which uses the idea appearing in the proof of Lemma \ref{lem3} is put in Appendix \ref{App_result_2}.

The colorful collective motion of animals has fascinated scientists from a wide array of fields. What exactly are the basic laws of collective motion, and how can they be understood empirically? Furthermore, what are the commonalities among the different factors in these laws? We established two theorems that concern turn, vortex, bifurcation, and merging:
\begin{thm}\label{turn_1}
Assume (A1) holds. Then
for System II (or System I with $\eta>\frac{\pi}{2}-\frac{\pi}{n}$), the events of turn, bifurcation and merging will happen an infinite number of times with probability $1$.
\end{thm}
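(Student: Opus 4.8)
The plan is to use the transfer principle of Section~\ref{transfer}: for each of the three phenomena I will exhibit a \emph{finite-time robust control strategy} of deterministically bounded horizon $T'$ for protocol~(\ref{model2}) (respectively~(\ref{model2_new})) which, starting from an \emph{arbitrary} state of $S^*$, first drives the system into a suitable precondition set and then steers it along a trajectory that realizes the phenomenon; by the telescoping argument in the proof of Lemma~\ref{robust} (equations~(\ref{rob_1})--(\ref{rob_3})), the probability that the noise imitates this strategy — i.e. that $\xi_i(t')\in[u_i(t')-\delta_i(t'),u_i(t')+\delta_i(t')]$ for all $i$ and all $t'$ in the window — on a given window of length $T'$ is at least $c:=\underline{\rho}^{\,T'}(2\varepsilon)^{nT'}>0$ conditionally on any past, so the phenomenon recurs on infinitely many disjoint windows with probability~$1$. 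For system~I the hypothesis $\eta>\tfrac{\pi}{2}-\tfrac{\pi}{n}$ enters exactly as in Lemma~\ref{lem1_2}, to first compress the headings into an arc of length $<\pi$ so that $\mathrm{atan2}$ of a positively weighted sum agrees with the ordinary weighted average there.

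\emph{Turn.} Fix small $\varepsilon\in(0,\eta/4]$ and put $\alpha:=2\varepsilon$. By Lemma~\ref{lem1} (system~II) or Lemma~\ref{lem1_2} (system~I) the ordered set $S_\alpha^1$ is robustly reached from $S^*$ within a fixed number $t_1$ of steps. From a state in $S_\alpha^1$, set target angles $a_s:=s\eta/2$, $s=0,1,\dots,m$ with $m:=\lceil 2\pi/\eta\rceil$, and at step $t=0,\dots,m-1$ choose $\delta_i(t)=\varepsilon$, $u_i(t)=a_{t+1}-\widetilde{\theta}_i(t)$ (angles read mod $2\pi$). Since $\widetilde{\theta}_i(t)$ is a convex combination of headings lying in $[a_t-\varepsilon,a_t+\varepsilon]$, one checks $u_i(t)\in[\tfrac{\eta}{2}-\varepsilon,\tfrac{\eta}{2}+\varepsilon]\subseteq[-\eta+\varepsilon,\eta-\varepsilon]$, so the controls are admissible and $\theta_i(t+1)\in[a_{t+1}-\varepsilon,a_{t+1}+\varepsilon]$ for every $i$; thus $d_{\theta(t)}\le2\varepsilon$ throughout the window (the agents stay almost synchronized) while the common heading turns by $a_m\ge\pi>\tfrac{\pi}{2}$. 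Concatenating the ``reach $S_\alpha^1$'' strategy with these $m$ steps gives the desired fixed-horizon robust strategy for a turn.

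\emph{Bifurcation and merge.} Partition $\{1,\dots,n\}$ into two nonempty sets (possible since $n\ge2$) and run the construction used in the proof of Lemma~\ref{lem3}: from $S_\alpha^1$, impose a small heading difference between the two groups so that their relative velocity is bounded below, whence every inter-group edge of $G(t)$ is severed, with margin, within a number of steps that is uniform over $S^*$ (an inter-group edge present when the maneuver begins has length $\le r_{\max}$, and, choosing the partition adaptively from the current positions as in that proof, the pairwise distances between the two groups are monotone non-decreasing), and never recreated during the remainder of the maneuver; once the two groups no longer communicate, each group's update $\widetilde{\theta}_i$ depends only on its own members, so we finish by driving the two groups to the distinct synchronized headings $+\tfrac{\pi}{2}$ and $-\tfrac{\pi}{2}$ by ramping their common target headings by $\eta/2$ per step as above. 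This realizes a bifurcation, and its terminal configuration is a set $S_{\mathrm{bif}}$ of ``two distinct synchronized groups'', which is therefore robustly reached from $S^*$ within a fixed horizon. Starting afresh from $S_{\mathrm{bif}}$ and applying the control of Lemma~\ref{lem1}/\ref{lem1_2} to drive all headings to a common value realizes a merge.

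\emph{Conclusion and main obstacle.} For each phenomenon we now have, from any initial state, a robust strategy of fixed horizon $T'$ whose successful execution produces the phenomenon in that window; the estimate in (\ref{rob_1_2})--(\ref{rob_2}) shows its success probability is $\ge c>0$ conditionally on any past, and the telescoping bound~(\ref{rob_3}) then forces each of turn, bifurcation and merge to occur in infinitely many disjoint windows with probability~$1$, which is the claim. The only genuinely delicate point is the requirement in Definition~\ref{def_reach} that the horizon be \emph{uniform over all of $S^*$}: this is immediate for the turn and for the final alignment of the merge, and for the severing step of the bifurcation it is exactly the estimate already carried out in the proof of Lemma~\ref{lem3} — only currently active inter-group edges matter, they have length $\le r_{\max}$, and an adaptive choice of the partition keeps the relevant pairwise distances monotone, so the severing time does not depend on the (arbitrary but fixed) initial positions. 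Everything else is routine verification of admissibility of the controls and of the mod-$2\pi$ bookkeeping for system~I.
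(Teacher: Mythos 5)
Your proposal follows essentially the same route as the paper: reach the ordered set via Lemma \ref{lem1} (or \ref{lem1_2}), realize the turn by ramping a common target heading with admissible controls while keeping the group synchronized, realize the bifurcation by the two-group separation of Lemma \ref{lem3} (cf. (\ref{lem3_5})) and the merge by re-aligning with Lemma \ref{lem1}/\ref{lem1_2}, and then invoke the conditional-probability/telescoping argument behind Lemma \ref{robust} for almost-sure infinite recurrence. The only deviation is that you ramp the turn through an angle of at least $\pi$, which for system II's averaging of representatives in $[-\pi,\pi)$ would misbehave at the branch cut; capping the total turn strictly below $\pi$ (the paper stops at $\pi/2$) removes this and changes nothing else.
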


\begin{thm}\label{vortex_1}
Assume (A1) holds. Then
for System I with $\eta>\frac{\pi}{2}-\frac{\pi}{n}$, with probability $1$ there exist vortices whose duration can be arbitrarily long.
\end{thm}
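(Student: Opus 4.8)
The plan is to realize a vortex as a designated state set that is finite-time robustly reachable from $S^*$ under protocol (\ref{model2_new}), and then invoke Lemma \ref{robust} (with $k=1$) to conclude that with probability $1$ it occurs infinitely often, hence in particular occurs. Recall that a vortex, by the descriptive definition in Subsection on Turn, Vortex, Bifurcation and Merger, is a turn whose accumulated angle change exceeds $2\pi$, carried out while all agents stay almost synchronized. So fix a target duration $L$ (arbitrarily large), fix a small synchronization tolerance $\beta\in(0,\eta)$, and define the state set
\begin{eqnarray*}
S_{L,\beta}^{\mathrm{vort}}:=\bigl\{(X,\theta)\in S^*:\ d_\theta\leq\beta\bigr\},
\end{eqnarray*}
but with the reaching requirement strengthened: I would show not merely that $S_{L,\beta}^{\mathrm{vort}}$ is reached, but that starting from any state one can robustly steer the system through a trajectory of length $\geq L$ during which $d_{\theta(t)}\leq\beta$ at every step while the common heading rotates by more than $2\pi$. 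This is exactly a ``finite-time robustly reachable'' statement in the sense of Definition \ref{def_reach} once we bundle ``$L$ consecutive steps of near-synchronized rotation'' into a single episode whose terminal state lies in a suitable set; the machinery of Lemma \ref{robust} then transfers it to system I.

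The construction has two phases. First, gather all agents into a tight heading cluster: by Lemma \ref{lem1_2} (this is where the hypothesis $\eta>\tfrac{\pi}{2}-\tfrac{\pi}{n}$ is used), $S_\alpha^1$ is finite-time robustly reachable from $S^*$ under protocol (\ref{model2_new}) for any $\alpha>0$; pick $\alpha<\beta$ so that after a bounded time $t_1$ we have $\max_i|\theta_i(t_1)|\leq\alpha/2$, hence $d_{\theta(t_1)}\leq\alpha$. Second, drive a coordinated rotation: for $t\geq t_1$ choose for every $i$ the control $u_i(t)=\rho-\widetilde\theta_i(t)$ with a fixed rotation rate $\rho\in(0,\eta)$ (say $\rho=\eta/2$) and uncertainty budget $\delta_i(t)=\varepsilon$ for a small $\varepsilon$ with $\rho+\varepsilon<\eta$ and $\varepsilon$ small compared to $\beta$. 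One checks inductively that if $d_{\theta(t)}\leq\beta$ is small then $\widetilde\theta_i(t)$ (the neighbor-average direction, which is a convex-combination-type average of headings in a short arc) stays within $O(\beta)$ of the cluster center, so $\theta_i(t+1)=\widetilde\theta_i(t)+u_i(t)+b_i(t)$ equals (cluster center)$+\rho+O(\beta)+b_i(t)$, whence the cluster center advances by $\rho$ each step and the spread $d_{\theta(t+1)}$ stays $\leq\beta$ provided $\beta$ was chosen larger than the $O(\beta)$ contraction defect plus $2\varepsilon$ — a standard fixed-point/contraction bookkeeping that I would not grind through here. Running this for $t_2:=t_1+\lceil 2\pi/\rho\rceil+1$ steps accumulates total heading change $>2\pi$, i.e. produces a vortex of duration $\geq L$ once we also repeat the rotation block enough times; the terminal state again lies in $S_\beta^3$, so we may take $S_1$ in Lemma \ref{robust} to be $S_\beta^3$ and the ``episode'' to be the whole turn.

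The main obstacle is the heading-average estimate for the atan2 dynamics: unlike system II, where $\widetilde\theta_i$ is a genuine linear convex combination, in system I $\widetilde\theta_i(t)=\mathrm{atan2}(\sum_j f_{ij}\sin\theta_j,\sum_j f_{ij}\cos\theta_j)$ is the argument of a weighted vector sum, which is only well-behaved when all $\theta_j$ lie in an arc shorter than $\pi$ and, more delicately, one must control the branch of atan2 so that the ``center angle'' tracked from step to step does not jump by $2\pi$ — this is precisely why the synchronization tolerance $\beta$ must be kept strictly below $\pi$ throughout, and why the initial clustering via Lemma \ref{lem1_2} (not merely $S_\pi^3$) is needed. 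Once that uniform-in-$t$ arc bound is in hand, the inductive invariant ``$d_{\theta(t)}\leq\beta$ and center advances by $\rho\pm O(\beta)$ per step'' closes, the requirements $\delta_i(t)\in[\varepsilon,\eta)$, $u_i(t)\in[-\eta+\delta_i(t),\eta-\delta_i(t)]$ of Definition \ref{def_reach} are met by construction, and Lemma \ref{robust}(i) with $k=1$ yields that a vortex of the prescribed (arbitrarily long) duration occurs with probability $1$, completing the proof.
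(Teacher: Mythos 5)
Your route is essentially the paper's own: the published proof of Theorem \ref{vortex_1} reduces to the proof of Theorem \ref{turn_1} --- first reach a tight heading cluster (Lemma \ref{lem1_2}, which is exactly where $\eta>\frac{\pi}{2}-\frac{\pi}{n}$ enters), then robustly steer a synchronized turn with per-step control increments of order $\eta$, and finally, using the isotropy of system I under open boundary conditions, add such turning angles so that the accumulated rotation exceeds $2\pi$ and the duration is as long as desired, with Lemma \ref{robust} transferring the controlled episode to system I with probability $1$. Your continuous-rotation block is just an explicit version of that concatenation, and your remarks on the ${\rm atan2}$ branch and on keeping $d_{\theta(t)}<\pi$ are the same point the paper hides behind ``isotropy.'' The one step that fails as written is the rotation control law: with $u_i(t)=\rho-\widetilde{\theta}_i(t)$ the update gives $\theta_i(t+1)=\widetilde{\theta}_i(t)+u_i(t)+b_i(t)=\rho+b_i(t)$ for every $i$ and every $t$, so the cluster is pinned at the fixed angle $\rho$ and no rotation (hence no vortex) is produced; the claim that ``the cluster center advances by $\rho$ each step'' does not follow from that formula. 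What your bookkeeping actually needs is a moving reference $c(t)$ advancing by $\rho$ per step, i.e. $u_i(t)=c(t)+\rho-\widetilde{\theta}_i(t)$ (taken mod $2\pi$ and saturated at $\pm(\eta-\delta_i(t))$ when out of range), or equivalently the saturating/tracking form used in the paper's proof of Theorem \ref{turn_1}; with that correction the admissibility of $u_i(t)$, the spread estimate, and the appeal to Lemma \ref{robust} all close, and the argument coincides with the paper's.
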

The proofs of Theorems \ref{turn_1} and \ref{vortex_1} are put in Appendix \ref{App_result_2}.

Our proposed method has favorable possible application in certain engineering systems.
For example, Yin, Wang, and Sun \cite{Yin2011,Wang2013} investigated some consensus algorithms for a platoon model, however there has been no crash analysis for them to date.
Using the idea of Lemma \ref{robust},
the crash analysis for these algorithms may be transformed to the design of cooperative controls such that the crash states are reached in finite time.
The method for the design of cooperative controls relates to the proofs of Theorems \ref{turn_1} and \ref{vortex_1}. Similarly, we can explore the design of collision avoidance algorithms for platoon model consensus via the proposed method.

\section{Results under Periodic Boundary Conditions}\label{mrs_2}

The system outlined by Vicsek \emph{et al.} \cite{Vicsek1995} assumes that all agents move in the square $[0,L)^2$ with periodic boundary conditions, suggesting that if an agent hits the boundary of the square, it will enter this square from the opposite boundary with the same velocity and heading.
 In mathematics, periodic boundary conditions contain two meanings:
(i) For all $i\in [1,n]$ and $t\geq 1$ we restrict $x_{i1}(t)$ and  $x_{i2}(t)$ to the interval $[0,L)$ by modulo $L$ when they are out of this interval; (ii) For all $i,j\in [1,n]$ and $t\geq 0$,
\begin{eqnarray*}
\begin{aligned}
&\|X_i(t)-X_j(t)\|_2^2\\
&=\min\{|x_{i1}(t)-x_{j1}(t)|, |x_{i1}(t)-x_{j1}(t)\pm L|\}^2\\
&~~+\min\{|x_{i2}(t)-x_{j2}(t)|, |x_{i2}(t)-x_{j2}(t)\pm L|\}^2.
\end{aligned}
\end{eqnarray*}

 Similar to Section \ref{mrs_1},
throughout this section we use the following assumption:

\textbf{(A2) }Assume that the population size $n\geq 2$, the parameters $\eta>0$, $v>0$, $r_i\geq 0$, $1\leq i\leq n$, and the initial positions $X(0)\in [0,L)^{2n}$ and headings $\theta(0)\in [-\pi,\pi)^n$ are arbitrarily given.  Also, assume
all the agents move in $[0,L)^2$ with periodic boundary conditions.



With the same proofs we can get that Lemmas \ref{lem1} and \ref{lem1_2} still hold under periodic boundary conditions.
Define $$r_{\max}:=\max_{1\leq i\leq n}r_i.$$
For Lemmas \ref{lem3} and \ref{lem3_new},
the corresponding versions under periodic boundary conditions are provided as follows:
\begin{lem}\label{lem4}
Assume (A2) is satisfied and let $\varepsilon>0$ be a constant arbitrarily given.
For both protocols (\ref{model2_new}) and (\ref{model2}), if
 \begin{eqnarray}\label{theo2_00}
L>\left\{%
\begin{array}{ll}
 2r_{\max}+2v\sum_{k=0}^{\lfloor \frac{\pi}{2\eta}-\frac{1}{2}\rfloor}\sin(\frac{\eta}{2}+k\eta)\\
 ~~\mbox{if $n$ is even or $\varepsilon> \frac{1}{n}$},\\
 3r_{\max}+2v\sum_{k=0}^{\lfloor \frac{\pi}{2\eta}+\frac{1}{\eta}\arcsin\frac{1}{n-1}-\frac{1}{2}\rfloor}\sin(\frac{\eta}{2}+k\eta)\\
 ~~\mbox{otherwise},\\
\end{array}%
\right.
\end{eqnarray}
then $S_{\varepsilon}^2$ is finite-time robustly reachable from $ S_{\eta}^1$.
\end{lem}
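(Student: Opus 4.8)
The plan is to transplant the three-stage construction behind Lemma \ref{lem3} onto the torus $[0,L)^2$, the only genuinely new point being that clusters moving in different directions do not automatically drift apart on a torus: one must first render each group transversely thin and then choose $L$ large enough that the separating manoeuvre cannot be undone by the periodic identification, and the bound (\ref{theo2_00}) is precisely this requirement. By Definition \ref{def_reach} it suffices, for an arbitrary start $(X(0),\theta(0))\in S_{\eta}^1$, to exhibit a fixed $\varepsilon'\in(0,\eta)$, a horizon $T$ and controls $u_i(t)\in[-\eta+\varepsilon',\eta-\varepsilon']$ with $\delta_i(t)\equiv\varepsilon'$ that drive $(X(t),\theta(t))$ into $S_{\varepsilon}^2$ by time $T$ for every admissible $b_i(t)\in[-\varepsilon',\varepsilon']$. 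Both protocols (\ref{model2_new}) and (\ref{model2}) are handled at once, since in each $\widetilde\theta_i(t)$ lies in the arc spanned by the neighbouring headings (whenever that arc is shorter than $\pi$, which one arranges first for (\ref{model2_new}) as in Lemma \ref{lem1_2}), and that is all the steering below uses.

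First partition $\{1,\ldots,n\}$ into groups with prescribed target headings whose size-weighted sum of unit vectors has norm at most $n\varepsilon$: when $n$ is even or $\varepsilon>1/n$, two antipodal groups of sizes $\lfloor n/2\rfloor$ and $\lceil n/2\rceil$ (norm $0$ or $1$, which is $\le n\varepsilon$ in either case); otherwise three groups, say $(n-1)/2$ agents aimed at each of $\pm\psi$ with $\cos\psi=1/(n-1)$ and one agent aimed oppositely, so the sum is $0$. Using the steering of Lemma \ref{lem1} — at each step nudge a heading toward a chosen value by an amount of magnitude $\le\eta-2\varepsilon'$, robustly realisable via $u_i(t)$ against $b_i(t)$ — bring all headings into a tiny band about $0$ and, giving the two or three groups slightly different headings that are admissible while the groups are still neighbours, make each group transversely thin, i.e. of diameter $\le r_{\max}$ along the direction in which it will later recede; this costs a finite number of steps, possibly depending on $L$. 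Then rotate each group's common heading to its target; the displacement accumulated by a group perpendicular to its final heading during this turn is exactly the quantity $2v\sum_{k}\sin(\tfrac{\eta}{2}+k\eta)$ in (\ref{theo2_00}), with summation range fixed by the turn length and hence by $\eta$ (shifted by $\tfrac1\eta\arcsin\tfrac1{n-1}$ in the three-group case). Since the set of cross-group displacement vectors then spans a region of diameter $\le 2r_{\max}$ (respectively $\le 3r_{\max}$), and (\ref{theo2_00}) forces $L$ to exceed that diameter plus the manoeuvre's transverse spread, the relative displacements produced by the turns push this set entirely into the far region of the torus; at that instant every cross-group pair $(i,j)$ has torus-distance $>r_{\max}\ge r_i$, so $f_{ij}(t)=0$ by assumption (ii) and the groups evolve independently thereafter.

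Once decoupled, inside each group set $u_i(t)=\psi_{g(i)}-\widetilde\theta_i(t)$, which is admissible because all headings already lie within $\eta-2\varepsilon'$ of the group target, so each $\theta_i(t)$ sits within $O(\varepsilon')$ of $\psi_{g(i)}$. Then $\tfrac1n\|\sum_i(\cos\theta_i,\sin\theta_i)\|_2$ differs by $O(\varepsilon')$ from $\tfrac1n$ times the norm of the size-weighted sum of the target unit vectors, which is $0$ or at most $1/n<\varepsilon$ by construction; choosing $\varepsilon'$ small enough to absorb the slack, $S_{\varepsilon}^2$ is reached at some time $\le T$. Every $u_i(t)$ and $\delta_i(t)$ respects the bounds of Definition \ref{def_reach} because each per-step heading correction has magnitude $\le\eta-2\varepsilon'$, so this is finite-time robust reachability of $S_{\varepsilon}^2$ from $S_{\eta}^1$.

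The main obstacle is the torus bookkeeping in the middle stage: one must check that (\ref{theo2_00}) genuinely excludes any wrap-around re-encounter, that is, that the relevant torus-distances — not merely Euclidean distances in a lifted copy of the plane — simultaneously exceed $r_{\max}$ for all cross-group pairs, and one must track the transverse spread precisely as $2v\sum_{k}\sin(\tfrac{\eta}{2}+k\eta)$ together with the cluster terms $2r_{\max}$ or $3r_{\max}$ and the odd-$n$, small-$\varepsilon$ correction $\arcsin\tfrac1{n-1}$. Making the groups transversely thin in a way compatible with the fixed speed $v$ and the per-step turn bound is the other delicate point. The synchronisation, the final freeze and the verification of the control bounds are routine adaptations of the proofs of Lemmas \ref{lem1} and \ref{lem3}.
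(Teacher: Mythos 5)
Your overall architecture is indeed the paper's: sort the agents along a recession axis into two (or three) clusters whose target headings cancel, drive the clusters apart until every cross-cluster distance exceeds $r_{\max}$, turn each cluster to its target while bounding the accumulated drift by $v\sum_k\sin(\frac{\eta}{2}+k\eta)$, and invoke (\ref{theo2_00}) to exclude a wrap-around re-encounter. The genuine gap is the stage you dismiss as ``make each group transversely thin \ldots{} costs a finite number of steps, possibly depending on $L$'': this is exactly where the paper does the real work, and your substitute is neither constructed nor sufficient. Since $X(0)$ is arbitrary in $[0,L)^{2}$, the population may already span the whole period along the recession axis, so the extreme agents of the two clusters are wrap-around close at the start, and pushing the clusters apart only reconnects them through the boundary. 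The paper's mechanism is a long preliminary phase of about $\lceil L/(2v\sin\frac{\eta}{K})\rceil$ steps, with headings held within $O(\eta/K)$ of zero, steering every agent above the line $x_{2}=L/2$ slightly downward and every agent below it slightly upward, so that the \emph{entire} population reaches that line to within $O(v)$. Only because the separation then starts from essentially zero extent in the recession direction does the total extent stay below $r_{\max}+2v\sum_k\sin(\frac{\eta}{2}+k\eta)+o(1)$, which is what makes the clearance condition come out as exactly $L>2r_{\max}+2v\sum_k\sin(\frac{\eta}{2}+k\eta)$. Your accounting (each cluster of diameter $\le r_{\max}$, cross-cluster displacements of diameter $\le 2r_{\max}$) has no construction behind it and, even granted, gives worse constants: two thin clusters that have additionally been separated by more than $r_{\max}$ already occupy an extent of about $3r_{\max}$ before the turn, so your route would need a strictly stronger hypothesis than (\ref{theo2_00}).

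The second concrete problem is your three-cluster assignment for $n$ odd, $\varepsilon\le\frac1n$: aiming $(n-1)/2$ agents at each of $\pm\psi$ with $\cos\psi=\frac{1}{n-1}$ and the singleton at $\pi$ does cancel, but it is inconsistent with the bound you claim to verify. The shift $\frac{1}{\eta}\arcsin\frac{1}{n-1}$ in the summation range of (\ref{theo2_00}) corresponds to the paper's choice $c_{n}=\frac{\pi}{2}+\arcsin\frac{1}{n-1}$ for the two large clusters with the singleton kept at heading $0$: there the singleton never leaves the mid-line and all three pairwise separations lie along the same axis, so the stated sum bounds the whole manoeuvre (with $3r_{\max}$ absorbing the extra clearance around the middle agent). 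In your version the singleton must turn through $\pi$; while its heading passes near $\pm\frac{\pi}{2}$ it drifts along the recession axis by an amount comparable to (roughly twice) the quarter-turn sum, toward one of the large clusters, and its intended separation from them is along a different axis altogether, so (\ref{theo2_00}) no longer covers the construction and a new estimate on $L$ would be required. Relatedly, the drift $2v\sum_k\sin(\frac{\eta}{2}+k\eta)$ is accumulated along, not perpendicular to, the final headings. The remaining ingredients of your sketch (the final freeze of headings, the $O(\varepsilon')$ estimate of $\varphi$, the verification of the bounds in Definition \ref{def_reach}, and reducing protocol (\ref{model2_new}) to headings in a half-circle via Lemma \ref{lem1_2}) do match the paper.
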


\begin{lem}\label{lem4_new}
Assume (A2) holds.
For both protocols (\ref{model2_new}) and (\ref{model2}), if
 \begin{eqnarray}\label{theo2_org_00}
L>2r_{\max}+2v\sum_{k=0}^{\lfloor \frac{\pi}{2\eta}-\frac{1}{2}\rfloor}\sin(\frac{\eta}{2}+k\eta),
\end{eqnarray}
then $( S_{\pi}^3)^c$ is finite-time robustly reachable from $ S_{\pi}^3$.
\end{lem}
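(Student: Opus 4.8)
The plan is to prove this lemma as the torus counterpart of Lemma~\ref{lem3_new}, along the same scheme as Lemma~\ref{lem3} and its periodic version Lemma~\ref{lem4}: transfer the construction of a disordered heading profile to a cooperative control problem for protocol (\ref{model2_new}) / (\ref{model2}), in four stages --- synchronize, compact, split, re-orient. Fix an arbitrary $(X,\theta)\in S_\pi^3$. Stage~1 (synchronize): since all headings already lie in an arc of length $<\pi$, and in that case --- for either protocol --- the pre-update heading $\widetilde\theta_i$ (the weighted average, or the atan2 of the weighted vector sum, of the neighbors' headings) again lies in that arc, the cap/boost argument behind Lemma~\ref{lem1} applies to both protocols and drives all headings into an arbitrarily narrow window around a common value, which we take to be $0$; here no restriction on $\eta$ is needed --- unlike in Lemma~\ref{lem1_2} --- precisely because we start inside a semicircle. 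Stage~2 (compact): herd all agents into a single blob of diameter small enough that the separation built in Stage~4 survives the final half-turn, which is possible because the hypothesis gives $L-2r_{\max}-2v\sum_{k=0}^{\lfloor \pi/(2\eta)-1/2\rfloor}\sin(\tfrac{\eta}{2}+k\eta)>0$; this is done by a loop-and-catch-up maneuver --- leading agents trace one full small loop (heading sweeping through $2\pi$ and returning near its value) while the trailing agents run straight and close the distance --- and since positions lie in the bounded square it terminates in a uniformly bounded number of steps.

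Stage~3 (split): partition the blob into subgroups $A,B$ with $|B|\ge 2$. Stage~4a (peel): hold the headings of $A$ at $+\varepsilon$ and of $B$ at $-\varepsilon$ for a small fixed $\varepsilon>0$ --- admissible for every agent, even those with cross-group neighbors, because the whole heading window then has width $O(\varepsilon)\ll\eta$ --- so $A$ drifts one way and $B$ the other and their transverse gap $D$ grows until it just exceeds $r_{\max}$, at which point the underlying graph has no $A$--$B$ edge and the two subsystems decouple. Stage~4b: with $A$ held near heading $0$, turn $B$ \emph{uniformly} (so $B$ stays a tight blob) by a half-turn from $\approx 0$ to $\approx\pi$, choosing the sense of the turn so that $B$ recedes from $A$ transversely; the additional transverse displacement of $B$ during this half-turn is at most the arc ``diameter'' $2v\sum_{k=0}^{\lfloor \pi/(2\eta)-1/2\rfloor}\sin(\tfrac{\eta}{2}+k\eta)$. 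At the final step, since $B$ is then a connected blob all at heading $\approx\pi$, one picks the inputs of its members to fan their headings out across $[\pi-\tfrac{\eta}{2}+\delta,\ \pi+\tfrac{\eta}{2}-\delta]$ in a single step while $A$'s stay near $0$; then every gap between consecutive headings is strictly below $\pi$ for every admissible $b_i$, so $d_\theta>\pi$, i.e.\ $(S_\pi^3)^c$ is reached. Checking all $u_i(t)\in[-\eta+\delta_i(t),\eta-\delta_i(t)]$ with a uniform positive lower bound on $\delta_i(t)$, and invoking Remark~\ref{rem_def_reach}, then yields finite-time robust reachability in the sense of Definition~\ref{def_reach}. (This one-step fan-out is what makes the profile robust: two \emph{rigid} antipodal groups give only $d_\theta=\pi$, which a one-sided uniform perturbation collapses below $\pi$.)

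The hypothesis on $L$ enters in exactly one place. On $\mathbb{R}^2$ the transverse separation in Stage~4 grows without bound, so decoupling is automatic and Lemma~\ref{lem3_new} needs no size condition; on $[0,L)^2$ the transverse $A$--$B$ distance is $\min(D,L-D)$ and must exceed $r_{\max}$ not only when decoupling first occurs but throughout the Stage~4b half-turn, during which $D$ rises from just over $r_{\max}$ to at most $r_{\max}+2v\sum_{k=0}^{\lfloor \pi/(2\eta)-1/2\rfloor}\sin(\tfrac{\eta}{2}+k\eta)$. Forcing this to stay below $L-r_{\max}$ --- so that $\min(D,L-D)=D>r_{\max}$ persists and no wrap-around $A$--$B$ edge reappears --- is precisely the stated inequality $L>2r_{\max}+2v\sum_{k=0}^{\lfloor \pi/(2\eta)-1/2\rfloor}\sin(\tfrac{\eta}{2}+k\eta)$. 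I expect the main obstacle to be Stages~2 and~4a: one cannot pull two complementary regions of a torus apart by translation alone (one interface always closes as the other opens), so the preliminary compaction is indispensable, and one must verify that both the loop-and-catch-up herding and the slow peeling terminate in uniformly bounded time with the inputs never leaving their admissible range, and that along the whole maneuver \emph{all} pairwise distances between members of $A$ and members of $B$ --- not merely the blob centers --- obey the bound just described.
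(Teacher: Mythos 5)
Your overall architecture --- synchronize, compact, split, separate transversely past $r_{\max}$, then rotate one group to force $d_{\theta}\ge\pi$, with the hypothesis (\ref{theo2_org_00}) entering exactly to keep the toroidal wrap-around distance above $r_{\max}$ during the turn --- is the same as the paper's (which adapts the proof of Lemma \ref{lem4}), and your endgame (a one-step fan of $B$'s headings over an arc of width nearly $\eta$ about $\pi$, instead of the paper's four clusters near $\pm\frac{\pi}{2}\pm\frac{\eta}{2K}$) is a legitimate variant. The genuine gap is Stage 2. The ``loop-and-catch-up'' herding of all $n$ agents into a single small blob is asserted, not proved, and as described it conflicts with the input constraint $u_i(t)\in[-\eta+\delta_i(t),\eta-\delta_i(t)]$: while an agent loops, its heading sweeps through all of $[-\pi,\pi)$, and any agent within interaction range has its pre-update value $\widetilde{\theta}_i(t)$ dragged toward the looping neighbor --- the weights $f_{ij}(t)$ are data of the system, not design variables, and may put almost all mass on that neighbor --- by an amount that can far exceed $\eta$, so the ``trailing agents run straight'' part of the choreography is not implementable in one step, and the claim of termination in a uniformly bounded number of steps is unsupported. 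Moreover, calling full two-dimensional compaction ``indispensable'' is incorrect: only the transverse coordinate needs compacting. The paper does precisely this with the control (\ref{lem4_1}) (aim at $\pm\frac{3\eta}{2K}$ according to whether $x_{i2}(t)$ is above or below $L/2$); after $\lceil L/(2v\sin\frac{\eta}{K})\rceil$ steps every $x_{i2}$ is within $v\sin\frac{2\eta}{K}$ of $L/2$, and crucially all headings stay within $\frac{2\eta}{K}$ of $0$ throughout, so every $\widetilde{\theta}_i$ stays small and the inputs remain admissible --- exactly the coupling problem your loop maneuver creates is thereby avoided.

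With Stage 2 replaced by this one-dimensional herding, the rest of your plan goes through essentially as in the paper: the peel at $\pm\varepsilon$ (the paper uses $\pm(\frac{\eta}{2}-\frac{\eta}{2K})$), the transverse bound $2v\sum_{k=0}^{\lfloor\frac{\pi}{2\eta}-\frac{1}{2}\rfloor}\sin(\frac{\eta}{2}+k\eta)$ for the half-turn (the paper's two symmetric quarter-turns give the same total), and the strict inequality in (\ref{theo2_org_00}) absorbing the $O(1/K)$ slack from the design margins and uncertainties. Two smaller points: on the torus you cannot invoke isotropy, so after synchronizing near the arc's midpoint $b$ you must still steer the common heading to a coordinate axis, as the paper does in its first two stages; and your construction needs $|B|\ge 2$, hence $n\ge 3$ (the paper's four nonempty groups need $n\ge 4$; for $n=2$ the target set degenerates to exactly antipodal headings), so the small-$n$ cases deserve at least a remark.
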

The proofs of Lemmas \ref{lem4} and \ref{lem4_new} are put in Appendices \ref{App_lem4} and \ref{App_lem4_new} respectively.

Similar to Theorems \ref{result_1} and \ref{result_org_1} we give the following Theorems  \ref{result_3} and \ref{result_org_3}:
\begin{thm}\label{result_3}
Assume (A2) and (\ref{theo2_00}) hold,
then all the results of Theorem \ref{result_1} still hold with $c$ and $T$ depending on $L$ additionally.
\end{thm}
\begin{proof}
With the same proofs we can get that Lemmas \ref{lem1} and \ref{lem1_2} still hold under periodic boundary conditions.
With the same proof as Theorem \ref{result_1} but using Lemma \ref{lem4} instead of Lemma \ref{lem3} yields our result.
\end{proof}

\begin{thm}\label{result_org_3}
Assume (A2) and (\ref{theo2_org_00}) hold,
then all the results of Theorem \ref{result_org_1} still hold with $c$ and $T$ depending on $L$ additionally.
\end{thm}
\begin{proof}
Immediate from Corollary \ref{robust2} and Lemma \ref{lem4_new}.
\end{proof}

In the traditional sense, the order parameter of the SPP system has a phase transition with respect to noise and population density \cite{Vicsek1995,Czirak1999}; this requires an assumption that these systems will maintain order after a certain time, provided the noise is small and the population density is high. We would like to point out, however, that Theorems \ref{result_org_1}  and \ref{result_org_3} hold for any $\eta>0$ (providing (\ref{theo2_org_00}) holds under periodic boundary conditions), and $n\geq 2$, so for any noise intensity and population density, the order of the SPP system can be broken after a sufficient amount of time. Additionally, according to Theorems \ref{result_1}, \ref{result_3}, and the following Theorem \ref{relt_assmp}, the SPP system will switch between ordered and disordered states an infinite number of times for any noise amplitude and population density.
Thus, our results indicate that the order parameter does not exhibit the simple phase transition described in the literature \cite{Vicsek1995,Czirak1999}. Combining the results of our previous work \cite{Chen2014}, this allows us to deduce that the time interval between ordered and disordered states may exhibit a phase transition concerning noise and population density. Our results also provide mathematical proof of the observation that randomness can make non-equilibrium systems exhibit anomalously large fluctuations, which is true of many real-world systems such as glassy systems, granular packings, and active colloids \cite{Tsimring2014,Keizer1987}.

Similar to Theorem \ref{result_2}, we provide a theorem on how noise affects the connectivity under periodic boundary conditions.
\begin{thm}\label{result_4}
Assume (A2) holds.
For System II (or System I with $\eta>\frac{\pi}{2}-\frac{\pi}{n}$), if $L>2r_{\max}$
then $\{G(t)\}_{t=0}^{\infty}$ is not uniformly jointly weakly connected with probability $1$.
\end{thm}
The proof of this Theorem is put in Appendix \ref{App_result_4}.

By applying Theorem \ref{result_4} to the homogeneous case, it becomes clear that $\{G(t)\}_{t=0}^{\infty}$ is not uniformly jointly connected with Probability $1$. Corollary  \ref{cor_1} asserts that robust consensus cannot be reached under open boundary conditions, however, for the periodic boundaries, it remains unclear whether robust consensus can be reached. For systems whose network topologies are undirected and do not couple with their states, the uniformly joint connectivity of the network topologies is a necessary and sufficient condition for robust consensus \cite{Wang2009}. This condition is not applicable to our model, however.

Finally we give the corresponding results of Theorems \ref{turn_1} and \ref{vortex_1} for periodic boundary conditions.
\begin{thm}\label{turn_2}
Assume (A2) holds. Then
for System II (or System I with $\eta>\frac{\pi}{2}-\frac{\pi}{n}$),  with probability $1$ the event of turn will happen an infinite number of times for any $L>0$. Additionally, if (\ref{theo2_org_00}) is satisfied,
the events of bifurcation and merging will also happen an infinite number of times with probability $1$.
\end{thm}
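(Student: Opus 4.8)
The plan is to run the same reduction used for Theorem \ref{turn_1}, replacing the open‑boundary ingredients by their periodic‑boundary counterparts: Lemmas \ref{lem1} and \ref{lem1_2} already hold verbatim on the torus, and Lemma \ref{lem4_new} supplies, under (\ref{theo2_org_00}), the maneuver that splits a synchronized flock apart. For each phenomenon I would exhibit a finite list of state sets $S_1,\dots,S_k\subseteq S^*$, together with robust control laws for protocol (\ref{model2_new}) (or (\ref{model2})), such that each $S_j$ is finite‑time robustly reachable from $S^*$ in the sense of Definition \ref{def_reach} and such that, \emph{between} consecutive waypoints, the controlled trajectory already displays the phenomenon. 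Lemma \ref{robust} then gives, with probability $1$, an infinite sequence of times at which $S_1,\dots,S_k$ are visited cyclically inside windows of bounded length, with the geometric tail of Lemma \ref{robust}(ii); and because the positive‑probability events $E_{j,t}$ in the proof of Lemma \ref{robust} are exactly the events on which the noise samples fall into the designed control boxes, on those events the stochastic trajectory is one of the controlled trajectories produced by the designed law and hence, by robustness, realizes the phenomenon. (Using tight time windows, i.e. $T_j$ equal to the number of steps each maneuver actually takes, one avoids any intermediate steps with unspecified control.)

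For the \emph{turn}, fix a small $\alpha\in(0,\eta)$, a step size $\delta$ with $0<\delta<\eta-\alpha$, and $m:=\lceil \tfrac{\pi}{2\delta}\rceil+1$. First use Lemma \ref{lem1} (or Lemma \ref{lem1_2} for system I, which is where the hypothesis $\eta>\tfrac{\pi}{2}-\tfrac{\pi}{n}$ is used) to robustly reach a configuration with all headings within $\alpha/2$ of $0$. From any configuration with all headings within $\alpha/2$ of a common value $c$, take $\delta_i(t)=\varepsilon\le\alpha/2$ and the control $u_i(t)=\delta+(c-\widetilde\theta_i(t))$; since $|c-\widetilde\theta_i(t)|\le\alpha/2$ this lies in $[-\eta+\varepsilon,\eta-\varepsilon]$, and $\theta_i(t+1)=c+\delta+b_i(t)$ lies within $\varepsilon$ of $c+\delta$. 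Iterating $m$ steps advances the common heading by $m\delta>\pi/2$ while keeping all headings inside a band of width $\le\alpha$ at every step, which is a turn. Encoding the $m+1$ waypoints ("all headings within $\alpha/2$ of $l\delta \bmod 2\pi$", $l=0,\dots,m$) as $S_1,\dots,S_{m+1}$, each is finite‑time robustly reachable from $S^*$ (reach $S_1$, then advance), and Lemma \ref{robust} yields a turn inside each cyclic window, hence infinitely many a.s. No condition on $L$ is needed, since a nearly synchronized flock translates almost rigidly: over the whole maneuver the relative positions drift by $O((t_1+m)v\alpha)$, with $t_1$ the time furnished by Lemma \ref{lem1}, so the torus size is irrelevant.

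For \emph{bifurcation} and \emph{merge} I would assume (\ref{theo2_org_00}) and take $k=2$ with $S_1:=S_{\alpha}^1$ and $S_2$ the set of "split, mutually non‑interacting, internally synchronized two‑group configurations whose two common headings differ by more than $\pi/2$". To go from $S_1$ to $S_2$ (a bifurcation), reuse the construction from the proof of Lemma \ref{lem4_new}: partition $[1,n]$ into two nonempty groups, steer one group's headings upward and the other's downward (compensating the cross‑group contributions to $\widetilde\theta_i$ through $u_i$, with $\delta_i(t)$ small as in Remark \ref{rem_def_reach}); under (\ref{theo2_org_00}) this pushes the two groups more than $r_{\max}$ apart on the torus within a bounded number of steps, after which all inter‑group edges are gone and the two sub‑flocks evolve independently, and finally steer each sub‑flock to its own common heading. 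To go from $S_2$ back to $S_1$ (a merge), steer the headings of the two sub‑flocks toward one another so that they re‑approach within $r_{\max}$ (bounded time, of order $L/v$), re‑forming the edges, and then drive all headings to within $\alpha/2$ of $0$. Both $S_1$ and $S_2$ are finite‑time robustly reachable from $S^*$ (for $S_2$, first reach $S_1$, then perform the bifurcation maneuver), so Lemma \ref{robust} produces times $t_{2l+1}$ with $(X(t_{2l+1}),\theta(t_{2l+1}))\in S_1$ and $t_{2l+2}$ with $(X(t_{2l+2}),\theta(t_{2l+2}))\in S_2$, cyclically; on the witnessing events the window $[t_{2l+1},t_{2l+2}]$ carries a bifurcation and $[t_{2l+2},t_{2l+3}]$ a merge, so both occur infinitely often with probability $1$.

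The main obstacle is the transfer from "visits a prescribed state set" to "exhibits a trajectory‑level phenomenon": a turn requires almost‑synchronization at \emph{every} intermediate step, and a bifurcation/merge requires the two sub‑flocks to genuinely move apart and later back together, whereas Lemma \ref{robust} only certifies which sets are reached. One therefore has to design the robust control laws so that these geometric properties hold throughout the maneuver and are precisely those forced by confining each noise sample $\xi_i(t)$ to its control box, so that they pass from the deterministic controlled system to the stochastic one. For the bifurcation and merge there is the additional torus bookkeeping — checking that (\ref{theo2_org_00}) really leaves room to open a gap exceeding $r_{\max}$ and later close it, and that the parameter uncertainty $b_i(t)$ can be kept small enough (Remark \ref{rem_def_reach}) not to re‑link the sub‑flocks prematurely — which is exactly the content carried over from the proof of Lemma \ref{lem4_new}.
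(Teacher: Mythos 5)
Your proposal is correct and follows essentially the same route as the paper: the turn is produced by the same incremental heading-rotation maneuver as in Theorem \ref{turn_1} (which indeed needs no condition on $L$), the bifurcation by the group-splitting maneuver of Lemmas \ref{lem4}/\ref{lem4_new} under (\ref{theo2_org_00}), the merge by re-synchronizing via Lemma \ref{lem1} (or \ref{lem1_2}), and the almost-sure recurrence via Lemma \ref{robust}. Your extra bookkeeping (waypoint sets with tight windows, and spatially re-approaching the sub-flocks before merging) only makes explicit, or slightly strengthens, what the paper leaves implicit.
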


\begin{thm}\label{vortex_2}
Assume (A2) holds.
If $\eta>\frac{\pi}{2}-\frac{\pi}{n}$ and $L>0$, then with probability $1$ System I will product vortices whose duration can be arbitrarily long.
\end{thm}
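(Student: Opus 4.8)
The plan is to mimic the proof of Theorem \ref{vortex_1} and then to check that the periodic boundary conditions are harmless. Recall that a vortex of duration $M$ is realized as soon as there is a window of $M$ consecutive times on which every agent stays within a tiny arc of a common phase $\psi_k$ while $\psi_k$ increases monotonically with total increment exceeding $2\pi$. We work with system I and protocol (\ref{model2_new}). Fix an arbitrary target duration $M_0\ge 1$, pick $\varepsilon\in(0,\eta/4)$, set $\Delta:=\eta-2\varepsilon\in(\eta/2,\eta)$ and $M:=\max\{M_0,\lceil 2\pi/\Delta\rceil+1\}$, so that $M\Delta>2\pi$. First I would reach a synchronized configuration: since $\eta>\frac{\pi}{2}-\frac{\pi}{n}$ and, as noted in this section, Lemma \ref{lem1_2} continues to hold verbatim under periodic boundary conditions (its proof only manipulates headings), the set $S_{2\varepsilon}^1=\{(X,\theta)\in S^*:\max_i|\theta_i|\le\varepsilon\}$ is finite-time robustly reachable from $S^*$; by Corollary \ref{robust2}(i) it is therefore, with probability $1$, reached along an infinite sequence of times $s_1<s_2<\cdots$, and by thinning this sequence we may assume $s_{\ell+1}>s_\ell+M$ for all $\ell$.

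Next I would design the rotation control started from any $(X(s),\theta(s))\in S_{2\varepsilon}^1$. Carry the phases cumulatively as real numbers $\psi_k:=k\Delta$ for $0\le k\le M$, reducing angles modulo $2\pi$ only where the dynamics evaluate them; for $t=s+k$ with $0\le k\le M-1$ put $\delta_i(t):=\varepsilon$ and $u_i(t):=\psi_{k+1}-\widetilde\theta_i(t)$, choosing the $2\pi$-representative of the right-hand side lying near $\Delta$. An induction on $k$ gives $\theta_j(s+k)$ inside the arc $[\psi_k-\varepsilon,\psi_k+\varepsilon]$ for all $j$: the base case is $(X(s),\theta(s))\in S_{2\varepsilon}^1$ with $\psi_0=0$; and if it holds at step $k$, then since $f_{ii}(t)>0$ always and every $\theta_j(t)$ with $f_{ij}(t)>0$ lies in the common arc of length $2\varepsilon<\pi$, the weighted circular average $\widetilde\theta_i(t)=\mathrm{atan2}(\sum_j f_{ij}(t)\sin\theta_j(t),\sum_j f_{ij}(t)\cos\theta_j(t))$ lies in that same arc (its resultant is a nonzero nonnegative combination of unit vectors contained in an arc shorter than $\pi$), whence $u_i(t)\in[\Delta-\varepsilon,\Delta+\varepsilon]=[\eta-3\varepsilon,\eta-\varepsilon]\subseteq[-\eta+\varepsilon,\eta-\varepsilon]$, which meets the admissibility requirement of Definition \ref{def_reach}, and $\theta_i(t+1)=\widetilde\theta_i(t)+u_i(t)+b_i(t)=\psi_{k+1}+b_i(t)\in[\psi_{k+1}-\varepsilon,\psi_{k+1}+\varepsilon]$ for every uncertainty $b_i(t)\in[-\varepsilon,\varepsilon]$. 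Since $\psi_M-\psi_0=M\Delta>2\pi$ while all agents stay within $\varepsilon$ of $\psi_k$ (hence within $2\varepsilon$ of one another) throughout, this control forces a vortex of duration $M\ge M_0$ on $[s,s+M]$ for every admissible uncertainty sequence. Positions enter the heading update only through the supports of the $f_{ij}(t)$, so the periodic boundary and the value $L>0$ are irrelevant here; this is also why the statement is confined to system I, since the linear angle-averaging of system II cannot keep headings synchronized across a $\pm\pi$ wrap-around.

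It then remains to pass from controllability to probability. At each visit time $s_\ell$ the control $u_i(t)$ above is $\mathcal{F}^{t-1}$-measurable, so iterating (\ref{noise_cond_2}) exactly as in (\ref{rob_1_2}) shows that the conditional probability of the event $G_\ell=\bigcap_{s_\ell\le t<s_\ell+M}\bigcap_{1\le i\le n}\{\xi_i(t)\in[u_i(t)-\varepsilon,u_i(t)+\varepsilon]\}$ given the past is at least $c':=(\underline{\rho}\,(2\varepsilon)^n)^M>0$; on $G_\ell$ one has $\xi_i(t)=u_i(t)+b_i(t)$ with $b_i(t)\in[-\varepsilon,\varepsilon]$, so the realized trajectory is one of those covered above and a vortex of duration $M$ occurs in $[s_\ell,s_\ell+M]$. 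Arguing as in (\ref{rob_3}), $P\big(\bigcap_{\ell\ge\ell_0}G_\ell^c\big)\le\prod_{\ell\ge\ell_0}(1-c')=0$, so with probability $1$ infinitely many $G_\ell$ occur and hence a vortex of duration at least $M_0$ occurs (in fact infinitely often). Intersecting these probability-$1$ events over $M_0=1,2,\ldots$ yields the theorem.

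I expect the main obstacle to be the bookkeeping in the second step: one must be careful that the relevant ``angle's change'' is measured cumulatively, so that the modulo-$2\pi$ reductions of the headings do not conceal that the phase truly winds past $2\pi$, and one must verify that the $\mathrm{atan2}$ resultant $\widetilde\theta_i(t)$ really stays inside the short moving arc $[\psi_k-\varepsilon,\psi_k+\varepsilon]$ at every step regardless of which agents happen to be neighbors. This is what closes the induction, and it is precisely the place where the circular geometry of $\mathrm{atan2}$, as opposed to linear averaging, is essential.
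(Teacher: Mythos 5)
Your proposal is correct and follows essentially the same route as the paper: reach a synchronized state via Lemma \ref{lem1_2} (which, as noted, survives periodic boundaries), then apply a bounded rotation control that winds the common heading past $2\pi$ while keeping the flock synchronized (the paper's ``adding the turning angles'' to the turn construction of Theorem \ref{turn_2}), and finally convert the control into a probability-one statement via the noise bound (\ref{noise_cond_2}) and the product argument of (\ref{rob_1_2})--(\ref{rob_3}). Your write-up simply makes explicit the cumulative-phase bookkeeping and the circular-average induction that the paper leaves implicit.
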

The proofs of Theorems \ref{turn_2} and \ref{vortex_2} are put in Appendix \ref{App_result_4}.

Buhl \emph{et al.} \cite{Buhl2006} used a one-dimensional version of the Vicsek model to investigate the collective behavior of locusts. By simulation, they found that the system exhibited large fluctuations of the order parameter and repeated changes in group's moving direction when the density of the individuals was low or average, but that the system became highly ordered after a short time when density was high. They also identified similarities between their simulations and real-world locust behavior. Because the homogeneous versions of Systems I and II have rules and features similar to the model in \cite{Buhl2006}, to some degree, Theorems  \ref{result_3}, \ref{turn_2}, \ref{vortex_2} and \ref{relt_assmp} can explain the repeated switches of the group's moving direction and the large fluctuations of the order parameter for low and medium densities - further, they allow us to predict that these behaviors still exist for high densities when the time step is sufficiently large.

\section{Results under An Assumption}\label{mrs_3}

Naturally, the original Vicsek model can evolve from disordered to ordered states; this has been verified through simulation \cite{Vicsek1995}, however, we can only prove it for the case of $\eta>\frac{\pi}{2}-\frac{\pi}{n}$  (which should also hold for any $\eta>0$). If its proof for $\eta>0$ became possible, Theorems \ref{result_1}, \ref{result_2}, \ref{turn_1}, \ref{vortex_1}, \ref{result_3}, \ref{result_4}, \ref{turn_2}, and \ref{vortex_2} would still hold in System I after relaxing the condition $\eta>\frac{\pi}{2}-\frac{\pi}{n}$ to $\eta>0$. This fact can be formulated as the following theorem:

\begin{thm}\label{relt_assmp}
For System I,
assume that (A1) (or (A2)) holds and there exists a finite time
$T>0$ depending on the system parameters only such that with
a positive probability $\min_{1\leq t\leq T} d_{\theta(t)}<\pi$.
Then the results about System I in Theorems \ref{result_1}, \ref{result_2}, \ref{turn_1}, \ref{vortex_1} (or \ref{result_3}, \ref{result_4}, \ref{turn_2} and \ref{vortex_2}) also hold when $\eta>\frac{\pi}{2}-\frac{\pi}{n}$ is replaced by $\eta>0$.
\end{thm}
The proof of this theorem is put in Appendix \ref{App_result_4}.

\section{Simulations}\label{sim}

To illustrate the relation between order parameters and population density or noise intensity, this section provides simulations of Systems I and II under periodic boundary conditions. All the following simulations assume the agents' speed is $v=0.01$, and that their initial headings and positions are independently and uniformly distributed in  $[-\pi,\pi)$ and $[0,5)^2$, respectively.
For $1\leq i,j\leq n$ and $t\geq 0$ we set the interaction weight
\begin{eqnarray*}
f_{ij}(t)=\left\{%
\begin{array}{ll}
1~~~~~~~~~~\mbox{if $\|X_i(t)-X_j(t)\|_2\leq r_i$},\\
0~~~~~~~~~~\mbox{else}.
\end{array}%
\right.
\end{eqnarray*}
Here we recall that $r_i$ is the interaction radius of agent $i$.

First, let's look at the order parameters of homogeneous Systems I and II under different population densities. In these simulations, we assumed the interaction radii of all agents are equal to $1$, and that noises $\{\xi_i(t)\}_{1\leq i\leq n, t\geq 0}$ are independently and uniformly distributed in $[-0.6,0.6]$. The maximum time step was set to  $10^6$. Figures  \ref{Fig1} and \ref{Fig2} show the values of the order function  $\varphi(t)$ of Systems I and II with $n = 10, 25$, and $40$, which represent low, medium, and high densities, respectively. As shown, from low to high density, Systems I and II exhibit ordered state at some moments and disordered state at other moments when the time grows large; this observation conforms entirely to our theoretical results for System II (Theorem \ref{result_3}), and implies that the condition $\eta>\frac{\pi}{2}-\frac{\pi}{n}$ for System I can be relaxed.

\begin{figure*}[htbp]
\begin{minipage}[t]{0.45\linewidth}
\centering
\includegraphics[width=3in]{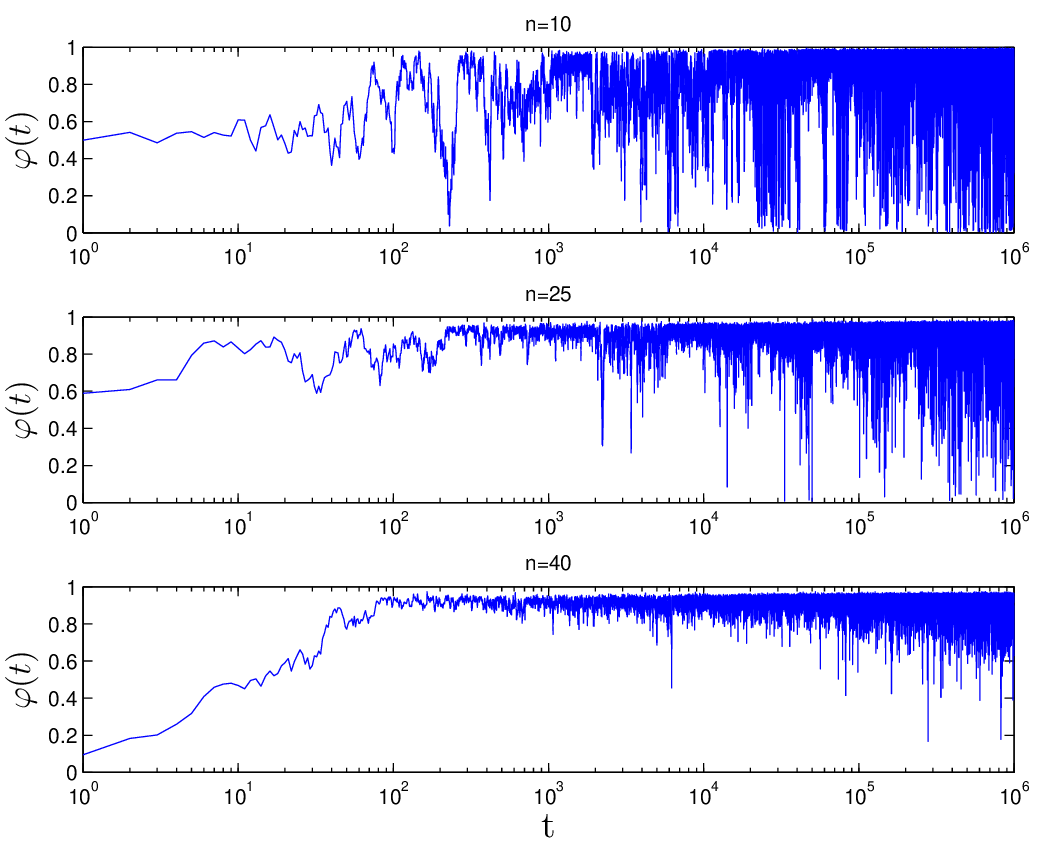}
\caption{The order parameter  $\varphi(t)$  of homogeneous System I (original Vicsek model) with $n=10,25,40$.}\label{Fig1}
\end{minipage}
\hfill
\begin{minipage}[t]{0.45\linewidth}
\centering
\includegraphics[width=3in]{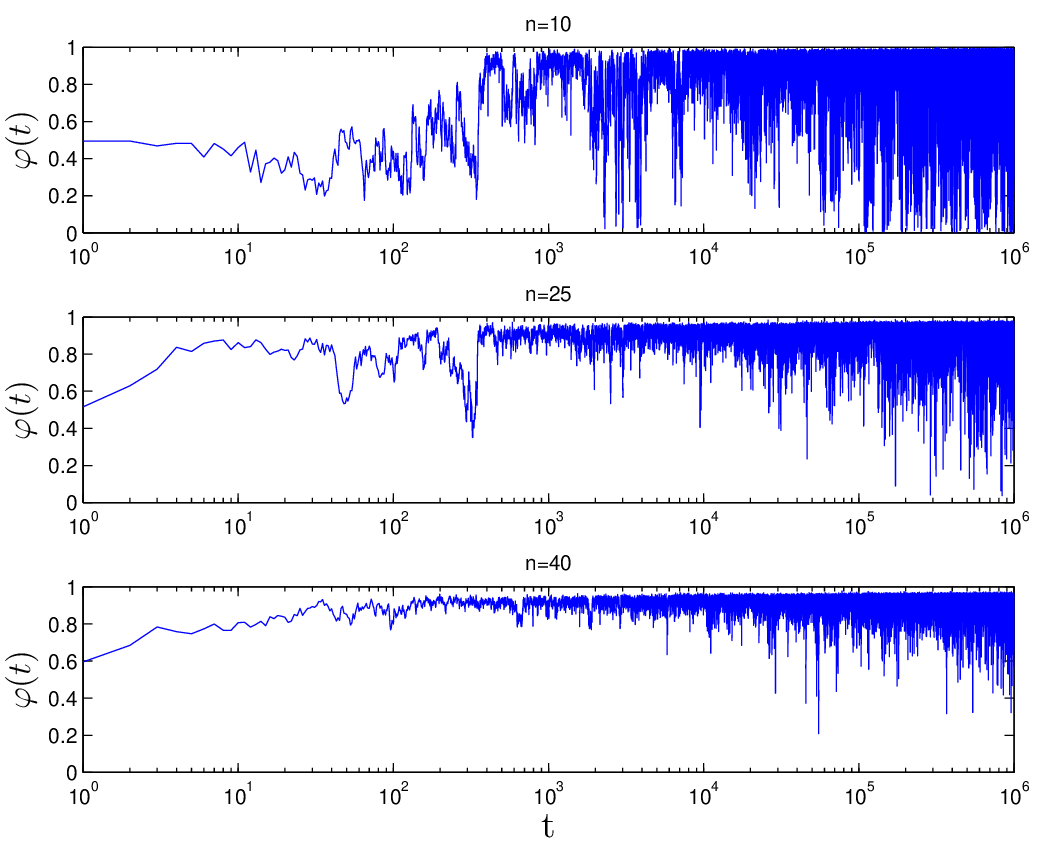}
\caption{The order parameter $\varphi(t)$ of  homogeneous System II with $n=10,25,40$.}\label{Fig2}
\end{minipage}
\end{figure*}

 \begin{figure*}[htbp]
\begin{minipage}[t]{0.45\linewidth}
\centering
\includegraphics[width=3in]{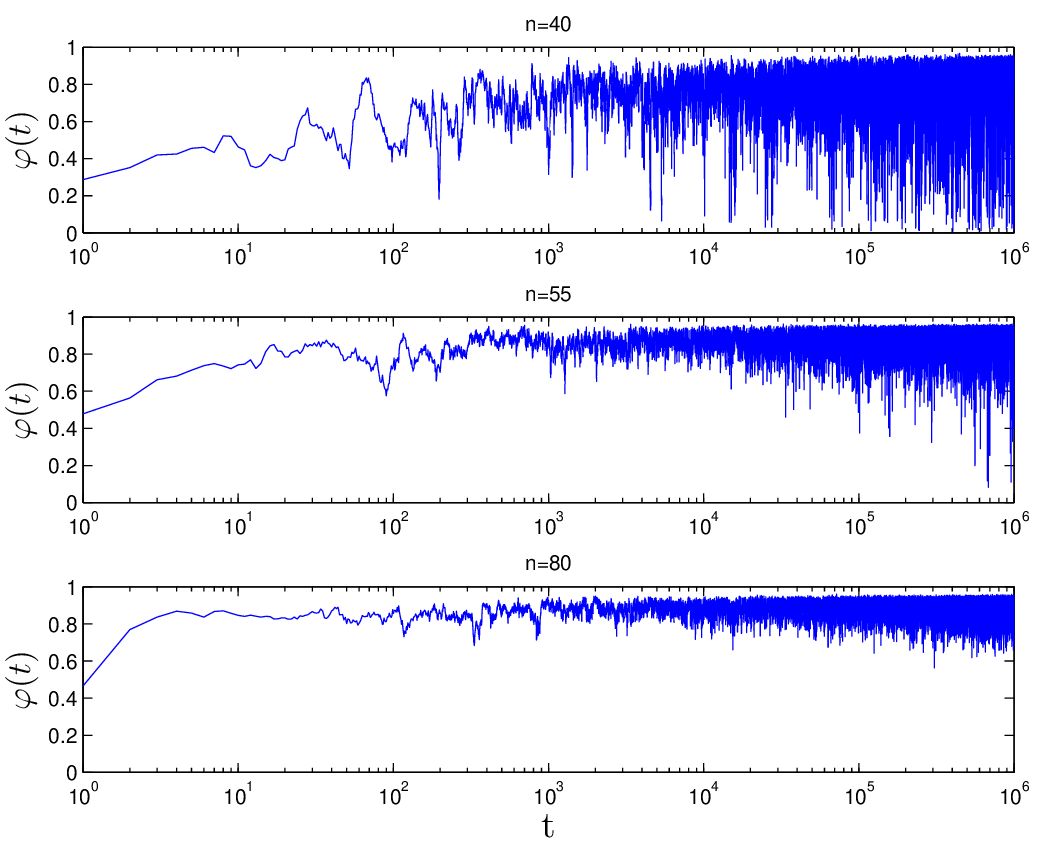}
\caption{The order parameter  $\varphi(t)$  of heterogeneous System I with $n=40,55,80$.}\label{Fig3}
\end{minipage}
\hfill
\begin{minipage}[t]{0.45\linewidth}
\centering
\includegraphics[width=3in]{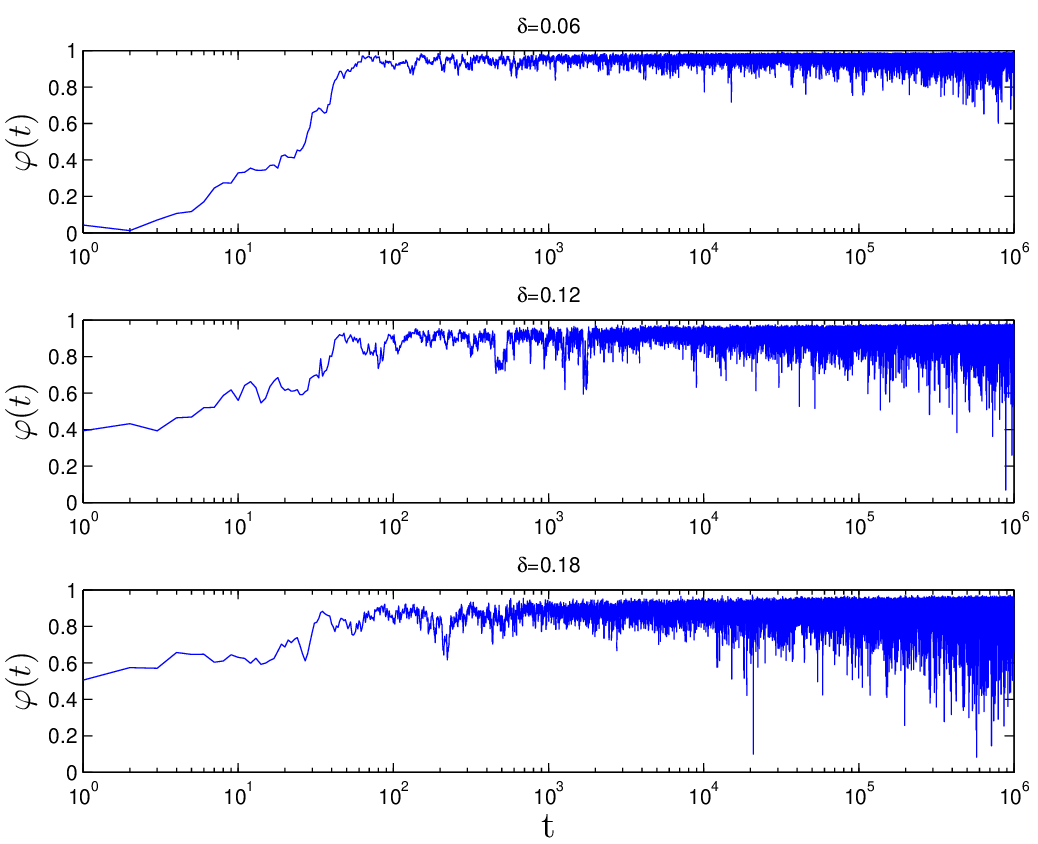}
\caption{The order parameter  $\varphi(t)$ of homogeneous System I under Gaussian noise with variance $\delta=0.06,0.12,0.18$.}\label{Fig4}
\end{minipage}
\end{figure*}

We also simulated a heterogeneous System I by assuming that the interaction radius of each agent is independently and uniformly distributed in $[0,2]$. With the same noise assumption shown in Figure \ref{Fig1}, Figure \ref{Fig3} shows the value of the order parameter of heterogeneous System I with $n=40, 55,$ and $80$. By comparing Figure \ref{Fig1} to Figure \ref{Fig3} with the same population size ($n=40$), it seems that homogeneity benefits the order of the system rather than heterogeneity. In addition, Figures \ref{Fig1}-\ref{Fig3} show that for both Systems I and II, higher population density reduces order parameter fluctuation.

Finally, we simulated a homogenous System I by assuming $\{\xi_i(t)\}_{1\leq i\leq n, t\geq 0}$ to an i.i.d. zero-mean Gaussian noise sequence. To investigate the influence of different noise intensities on the order parameter, we set the variance $\delta$ of the noise to be $0.06$, $0.12$, or $0.18$ (Figure  \ref{Fig4}). We found that larger noise intensity increases order parameter fluctuation.

\section{Conclusion}\label{conclude}
Self-organized systems characterized by deterministic laws and randomness commonly exist in real-world natural, engineering, social, and economic systems. Accurate analysis of the local rules of these systems as they affect their global behavior is a common (and quite challenging) problem in many fields.  In this paper, we proposed an innovative, general approach to this problem that transforms it to the design of cooperative control algorithms. Using our method, we revealed the manner in which noise affects the order and connectivity of heterogeneous SPP systems, and also showed that these systems can spontaneously produce turn, vortex, bifurcation, and flock merging phenomena.

An interesting problem inherent to the SPP system is minimizing the effects of noise to keep the system in order. A possible method of doing so is to adopt the distributed stochastic approximation, under which each agent uses a decreasing gain function acting on its neighbors' information to reduce measurement or communication noise \cite{Li2010,Huang2012,Yin2011,Cheng2014}.

Unfortunately, as many researchers have pointed out, the Vicsek model is very basic but probably not particularly descriptive of actual biological clusters. In the future, we plan to use our proposed method to analyze more practical systems. Of course, the design of control algorithms remains challenging in terms of complex, real-world systems. Another attractive future research direction is the development of corresponding theories for designing these types of algorithms.

\section*{Acknowledgment}
The author would like to thank the guidance of Prof. Lei Guo from
Academy of Mathematics and Systems Science, Chinese Academy of Sciences.

\appendices
\section{Proof of Lemma \ref{lem3} }\label{App_lem3}
We first consider the protocol (\ref{model2}).
Without loss of generality we assume $\varepsilon\in(0,1)$. Define the constant $\beta:=\min\{\frac{\eta}{2},2\arcsin\frac{\varepsilon}{2}\}$.
we will prove our result for the following two cases respectively:\\
\textbf{Case I}: $n$ is even.
 We separate the $n$ agents into two disjoint sets $\mathcal{A}_1$ and $\mathcal{A}_2$ with $|\mathcal{A}_1|=|\mathcal{A}_2|=\frac{n}{2}$,
and $x_{i2}(0)\geq x_{j2}(0)$ for any agent $i\in \mathcal{A}_1, j\in \mathcal{A}_2$. Here we recall that
 $x_{i2}(0)$ denotes the second coordinate of $X_i(0)$. Let
 \begin{eqnarray}\label{lem3_1_1}
 t_1:=\big\lfloor \frac{r_{\max}}{2v\sin(\eta/4)}\big\rfloor+1.
 \end{eqnarray}
 For $0\leq t<t_1$, we choose
 \begin{eqnarray}\label{lem3_1_a}
\delta_i(t)=\frac{\eta}{8},~~~~\forall 1\leq i\leq n,
\end{eqnarray}
 and set
  \begin{eqnarray}\label{lem3_1_b}
u_i(t)=\left\{%
\begin{array}{ll}
\frac{3\eta}{8}-\widetilde{\theta}_i(t)~~~~~~~\mbox{if}~i\in\mathcal{A}_1,\\
-\frac{3\eta}{8}-\widetilde{\theta}_i(t)~~~~\mbox{if}~i\in\mathcal{A}_2.
\end{array}%
\right.
\end{eqnarray}
From this we can get for all $t\in[0,t_1)$,
  \begin{eqnarray}\label{lem3_1}
\theta_i(t+1)\in\left\{%
\begin{array}{ll}
 [\eta/4,\eta/2]~~~~~~~~\mbox{if}~i\in\mathcal{A}_1,\\
\normalsize[-\eta/2,-\eta/4\normalsize]~~\mbox{if}~i\in\mathcal{A}_2.
\end{array}%
\right.
\end{eqnarray}
From this  for any $i\in\mathcal{A}_1$ and $j\in\mathcal{A}_2$, we have
\begin{eqnarray}\label{lem3_2}
&&x_{i2}(t_1)-x_{j2}(t_1)\nonumber\\
&&=x_{i2}(0)+\sum_{0<t\leq t_1}v\sin\theta_i(t)-x_{j2}(0)-\sum_{0<t\leq t_1}v\sin\theta_j(t)\nonumber\\
&&\geq v \sum_{0<t\leq t_1}2\sin\frac{\eta}{4}=2vt_1\sin\frac{\eta}{4}>r_{\max}.
\end{eqnarray}
which indicates that there exists no edge between  $\mathcal{A}_1$ and $\mathcal{A}_2$ at time $t_1$.
Also, by (\ref{lem3_1_b}), (\ref{lem3_1}) and the condition of $\max_{1\leq i\leq n}|\theta_i(0)|\leq \frac{\eta}{2}$ we have
\begin{eqnarray}\label{lem3_1_2}
u_i(t)\in\left[-\frac{7\eta}{8},\frac{7\eta}{8}\right]=\left[-\eta+\delta_i(t),\eta-\delta_i(t)\right]
\end{eqnarray}
for any $1\leq i\leq n$ and $0\leq t<t_1$.

Next we will give a control algorithm to minimize the value of the order function. Set $$t_2:=\max\left\{t_1+\big\lceil\frac{\pi-2\beta}{\eta}-\frac{1}{2} \big\rceil, t_1+1\right\}.$$
For $t\in[t_1,t_2)$, we choose
\begin{eqnarray}\label{lem3_3}
(\delta_i(t),u_i(t))=\left\{%
\begin{array}{ll}
\big(\frac{\eta}{4},\frac{3\eta}{4}\big)~~~~~\mbox{if}~\widetilde{\theta}_i(t)<\pi/2+\beta-\eta\\
\big(\beta, \frac{\pi}{2}-\widetilde{\theta}_i(t)\big)~~\mbox{otherwise}
\end{array}%
\right.
\end{eqnarray}
for $i\in\mathcal{A}_1$, and choose
\begin{eqnarray}\label{lem3_4}
(\delta_i(t),u_i(t))=\left\{%
\begin{array}{ll}
\big(\frac{\eta}{4},-\frac{3\eta}{4}\big)~~\mbox{if}~\widetilde{\theta}_i(t)>-\pi/2-\beta+\eta\\
\big(\beta, -\frac{\pi}{2}-\widetilde{\theta}_i(t)\big)~~\mbox{otherwise}
\end{array}%
\right.
\end{eqnarray}
for $i\in\mathcal{A}_2$. From (\ref{lem3_1}), (\ref{lem3_3}) and (\ref{lem3_4}) it can be computed that
\begin{eqnarray}\label{lem3_4_a}
u_i(t)\in [-\eta+\delta_i(t),\eta-\delta_i(t)],~\forall 1\leq i\leq n, t_1\leq t<t_2.
\end{eqnarray}
If the sets $\mathcal{A}_1$ and $\mathcal{A}_2$ are disconnected at time $t$, then with the similar methods to (\ref{lem1_3}) and (\ref{lem1_5}) we can get
\begin{eqnarray*}
\min_{i\in\mathcal{A}_1}\theta_i(t+1)\left\{%
\begin{array}{ll}
\in\big[\frac{\pi}{2}-\beta,\frac{\pi}{2}+\beta\big]~\mbox{if}~\min\limits_{i\in\mathcal{A}_1}\theta_i(t)\geq \frac{\pi}{2}-\beta-\frac{\eta}{2},\\
\geq \frac{\eta}{2}+\min\limits_{i\in\mathcal{A}_1}\theta_i(t)~~\mbox{otherwise},
\end{array}%
\right.
\end{eqnarray*}
and
\begin{eqnarray*}
\max_{i\in\mathcal{A}_2}\theta_i(t+1)\left\{%
\begin{array}{ll}
\in\big[-\frac{\pi}{2}-\beta,-\frac{\pi}{2}+\beta\big]\\
~~~\mbox{if}~\max\limits_{i\in\mathcal{A}_2}\theta_i(t)\leq -\frac{\pi}{2}+\beta+\frac{\eta}{2},\\
\leq -\frac{\eta}{2}+\max\limits_{i\in\mathcal{A}_2}\theta_i(t)~~\mbox{otherwise}.
\end{array}%
\right.
\end{eqnarray*}
So by (\ref{lem3_2}) and induction we can get $\mathcal{A}_1$ and $\mathcal{A}_2$ are always disconnected in the time $[t_1,t_2]$. Then, similar to (\ref{lem1_8}) we have
 \begin{eqnarray}\label{lem3_5}
\theta_i(t_2)\in\left\{%
\begin{array}{ll}
 [\pi/2-\beta,\pi/2+\beta]~~~~~\mbox{if}~i\in\mathcal{A}_1,\\
\normalsize[-\pi/2-\beta,-\pi/2+\beta\normalsize]~~\mbox{if}~i\in\mathcal{A}_2,
\end{array}%
\right.
\end{eqnarray}
which is followed by
\begin{eqnarray}\label{lem3_6}
\begin{aligned}
\varphi(t_2)&=\frac{1}{n}\big\|\sum_{i\in\mathcal{A}_1\cup\mathcal{A}_2}\big(\cos\theta_i(t_2), \sin\theta_i(t_2)  \big)\big\|\\
&=\frac{1}{n}\big\|\sum_{i\in\mathcal{A}_1}\big(\cos\theta_i(t_2), \sin\theta_i(t_2)-1  \big)+\sum_{i\in\mathcal{A}_2}\big(\cos\theta_i(t_2), \sin\theta_i(t_2)+1  \big)\big\|\\
&\leq \big\|\big(\cos\big(\frac{\pi}{2}-\beta\big), \sin\big(\frac{\pi}{2}-\beta\big)-1 \big)\big\|\\
&=\sqrt{2-2\cos\beta}= 2\sin\frac{\beta}{2}\leq\varepsilon.
\end{aligned}
\end{eqnarray}
Together this with (\ref{lem3_1_2}) and (\ref{lem3_4_a}) we have $ S_{\varepsilon}^2$ is robustly reachable at time $t_2$. \\
\textbf{Case II}: $n$ is odd. We separate the $n$ agents into three disjoint sets $\mathcal{A}_1$, $\mathcal{A}_2$ and $\mathcal{A}_3$ which satisfy that $|\mathcal{A}_1|=|\mathcal{A}_2|=\frac{n-1}{2}, |\mathcal{A}_3|=1$,  and $x_{i2}(0)\geq x_{j2}(0) \geq x_{k2}(0)$ for any agent $i\in \mathcal{A}_1, j\in \mathcal{A}_3$ and $k\in \mathcal{A}_2$.
Let
$$t_3:=\big\lfloor \frac{r_{\max}}{v(\sin\frac{\eta}{4}-\sin\frac{\eta}{8})}\big\rfloor+1.$$
 For $0\leq t<t_3$, we choose $\delta_i(t)=\frac{\eta}{8}$, $1\leq i\leq n$, and set
  \begin{eqnarray}\label{lem3_7}
u_i(t)=\left\{%
\begin{array}{ll}
\frac{3\eta}{8}-\widetilde{\theta}_i(t)~~~~~~~\mbox{if}~i\in\mathcal{A}_1,\\
-\frac{3\eta}{8}-\widetilde{\theta}_i(t)~~~~\mbox{if}~i\in\mathcal{A}_2,\\
-\widetilde{\theta}_i(t)~~~~\mbox{if}~i\in\mathcal{A}_3.
\end{array}%
\right.
\end{eqnarray}
Similar to (\ref{lem3_1}) and (\ref{lem3_1_2}), we can get for all $t\in[0,t_3)$,
 \begin{eqnarray*}\label{lem3_13}
\theta_i(t+1)\in\left\{%
\begin{array}{ll}
 [\eta/4,\eta/2]~~~~~\mbox{if}~i\in\mathcal{A}_1,\\
\normalsize[-\eta/2,-\eta/4\normalsize]~~\mbox{if}~i\in\mathcal{A}_2,\\
\normalsize[-\eta/8, \eta/8\normalsize]~~\mbox{if}~i\in\mathcal{A}_3,
\end{array}%
\right.
\end{eqnarray*}
and
\begin{eqnarray}\label{lem3_13_a}
 u_i(t)\in[-\eta+\delta_i(t),\eta-\delta_i(t)], ~~\forall 1\leq i\leq n.
 \end{eqnarray}
Also, similar to (\ref{lem3_2}) we can get
the sets $\mathcal{A}_1$, $\mathcal{A}_2$ and $\mathcal{A}_3$ are mutually disconnected at time $t_3$.

Let $c_n:=\frac{\pi}{2}+\arcsin\frac{1}{n-1}$ and set $$t_4:=\max\left\{t_3+\big\lceil\frac{2c_n-2\beta}{\eta}-\frac{1}{2} \big\rceil, t_3+1\right\}.$$
For all $t\in[t_3,t_4)$, similar to (\ref{lem3_3}) and (\ref{lem3_4}) we choose
\begin{eqnarray*}\label{lem3_14}
(\delta_i(t),u_i(t))=\left\{%
\begin{array}{ll}
\big(\frac{\eta}{4},\frac{3\eta}{4}\big)~~~~~\mbox{if}~\widetilde{\theta}_i(t)<c_n+\beta-\eta\\
\big(\beta, c_n-\widetilde{\theta}_i(t)\big)~~\mbox{otherwise}
\end{array}%
\right.
\end{eqnarray*}
for $i\in\mathcal{A}_1$, and choose
\begin{eqnarray*}\label{lem3_15}
(\delta_i(t),u_i(t))=\left\{%
\begin{array}{ll}
\big(\frac{\eta}{4},-\frac{3\eta}{4}\big)~~~~~\mbox{if}~\widetilde{\theta}_i(t)>-c_n-\beta+\eta\\
\big(\beta, -c_n-\widetilde{\theta}_i(t)\big)~~\mbox{otherwise}
\end{array}%
\right.
\end{eqnarray*}
for $i\in\mathcal{A}_2$.
Also, for $i\in\mathcal{A}_3$,
set $\delta_i(t)=\beta$ and $u_i(t)=-\widetilde{\theta}_i(t)$. Similar to (\ref{lem3_4_a}) we can get
\begin{eqnarray}\label{lem3_15_a}
u_i(t)\in [-\eta+\delta_i(t),\eta-\delta_i(t)],~~\forall 1\leq i\leq n, t_3\leq t<t_4.
\end{eqnarray}
Also, similar to Case I we have
$\mathcal{A}_1$, $\mathcal{A}_2$ and $\mathcal{A}_3$ are always mutually disconnected in the time $[t_3,t_4]$.  Thus,  similar to (\ref{lem3_5}) we can get
 \begin{eqnarray}\label{lem3_16}
\theta_i(t_4)\in\left\{%
\begin{array}{ll}
 [c_n-\beta,c_n+\beta]~~~~~\mbox{if}~i\in\mathcal{A}_1\\
\normalsize[-c_n-\beta,-c_n+\beta\normalsize]~~\mbox{if}~i\in\mathcal{A}_2\\
\normalsize[-\beta,\beta\normalsize]~~\mbox{if}~i\in\mathcal{A}_3
\end{array}%
\right.,
\end{eqnarray}
which indicates that
\begin{eqnarray}\label{lem3_17}
\begin{aligned}
\varphi(t_4)&=\frac{1}{n}\big\|\sum_{i\in\mathcal{A}_1}\big(\cos\theta_i(t_4)-\cos c_n, \sin\theta_i(t_4)-\sin c_n  \big)\\
&~~~~+\sum_{i\in\mathcal{A}_2}\big(\cos\theta_i(t_4)-\cos c_n, \sin\theta_i(t_4)+\sin c_n  \big)\\
&~~~~+\sum_{i\in\mathcal{A}_3}\big(\cos\theta_i(t_4)-1, \sin\theta_i(t_4)\big)\big\|\\
&\leq \sqrt{2-2\cos\beta}= 2\sin\frac{\beta}{2}\leq\varepsilon.
\end{aligned}
\end{eqnarray}
Together this with (\ref{lem3_13_a}) and (\ref{lem3_15_a}) we have $ S_{\varepsilon}^2$ is robustly reachable at time $t_4$.

For protocol (\ref{model2_new}), if $\eta<\pi/2$ we can get our result with the similar method as protocol (\ref{model2}). Otherwise,
by Lemma \ref{lem1_2} we can control the state of the system to $S_{\eta'}^1$ with $\eta'<\pi/2$, then with the similar method as protocol (\ref{model2}) yields our result.

\section{Proof of Lemma \ref{lem3_new}}\label{App_lem3_new}

We will discuss protocol (\ref{model2_new}) first.
Because the System I has the isotropic property under open boundary conditions,  without loss of generality we assume the initial headings $\theta_i(0)$, $1\leq i\leq n$ are distributed in the interval $[-\pi/2,\pi/2)$. Thus we can get
\begin{eqnarray}\label{lem3_new_0}
\theta_{\min}(0)\leq \widetilde{\theta}_i(0)\leq \theta_{\max}(0),~~\forall 1\leq i\leq n.
\end{eqnarray}
For $t\geq 0$ and $1\leq i \leq n$, we choose $(\delta_i(t),u_i(t))$ as same as (\ref{lem1_1}) but using $\eta$ instead of $\alpha$. With almost the same process of (\ref{lem1_2})-(\ref{lem1_8})  we have
\begin{eqnarray}\label{lem3_new_1}
\max_{1\leq i \leq n}|\theta_i(t_1')|\leq \eta/2,
\end{eqnarray}
where $t_1':=\lceil \frac{\pi}{\eta}\rceil$-1.

Similar to the case II of the proof of Lemma \ref{lem3}, we separate the $n$ agents into three non-empty disjoint sets $\mathcal{A}_1$, $\mathcal{A}_2$ and $\mathcal{A}_3$ with $x_{i2}(t_1')\geq x_{j2}(t_1') \geq x_{k2}(t_1')$ for any agent $i\in \mathcal{A}_1, j\in \mathcal{A}_3$ and $k\in \mathcal{A}_2$.
Let
$$t_3':=t_1'+\big\lfloor \frac{r_{\max}}{v(\sin\frac{\eta}{4}-\sin\frac{\eta}{8})}\big\rfloor+1.$$
 For $t_1'\leq t<t_3'$, we choose $\delta_i(t)=\frac{\eta}{8}$, $1\leq i\leq n$, and set $u_i(t)$ as same as (\ref{lem3_7}).
With the similar discussion to the case II of the proof of Lemma \ref{lem3}  we have
the sets $\mathcal{A}_1$, $\mathcal{A}_2$ and $\mathcal{A}_3$ are mutually disconnected at time $t_3'$.

Let $t_4':=t_3'+\lceil\frac{6\pi}{5\eta}\rceil-1$.
For all $t\in[t_3',t_4')$ we set $\delta_i(t)=\eta/8$, and choose
\begin{eqnarray*}\label{lem3_new_2}
u_i(t)=\left\{%
\begin{array}{ll}
\frac{3\eta}{4}~~~~~\mbox{if}~\widetilde{\theta}_i(t)<\frac{3\pi}{4}-\frac{3\eta}{4}\\
\frac{3\pi}{4}-\widetilde{\theta}_i(t)~~\mbox{otherwise}
\end{array}%
\right.
\end{eqnarray*}
for $i\in\mathcal{A}_1$,
\begin{eqnarray*}\label{lem3_new_3}
u_i(t)=\left\{%
\begin{array}{ll}
-\frac{3\eta}{4}~~~~~\mbox{if}~\widetilde{\theta}_i(t)>-\frac{3\pi}{4}+\frac{3\eta}{4}\\
-\frac{3\pi}{4}-\widetilde{\theta}_i(t)~~\mbox{otherwise}
\end{array}%
\right.
\end{eqnarray*}
for $i\in\mathcal{A}_2$, $u_i(t)=-\widetilde{\theta}_i(t)$ for $i\in\mathcal{A}_3$. Similar to (\ref{lem3_16}) we can get
 \begin{eqnarray}\label{lem3_new_4}
\theta_i(t_4')\in\left\{%
\begin{array}{ll}
 [\frac{3\pi}{4}-\frac{\eta}{8},\frac{3\pi}{4}+\frac{\eta}{8}]~~~~~\mbox{if}~i\in\mathcal{A}_1\\
\normalsize[-\frac{3\pi}{4}-\frac{\eta}{8},-\frac{3\pi}{4}+\frac{\eta}{8}\normalsize]~~\mbox{if}~i\in\mathcal{A}_2\\
\normalsize[-\frac{\eta}{8},\frac{\eta}{8}\normalsize]~~\mbox{if}~i\in\mathcal{A}_3
\end{array}%
\right..
\end{eqnarray}
With the fact of $\eta\in(0,\pi)$ we have $d_{\theta(t_4')}>\pi$.

For protocol (\ref{model2}), with the same process as (\ref{lem3_new_0})-(\ref{lem3_new_4}) our result follows.

\section{Proofs of Theorems \ref{result_2},\ref{turn_1} and \ref{vortex_1}}\label{App_result_2}

\begin{proof} \textbf{of Theorem \ref{result_2}}
We first consider the System II.
For any $t\geq 0$, if $\max_{1\leq i \leq n}|\theta_i(t)|\leq \frac{\eta}{2}$, similar to the proof of Lemma \ref{lem3} we separate the $n$ agents into two disjoint sets $\mathcal{A}_1$ and $\mathcal{A}_2$ with  $|\mathcal{A}_1|=\lceil\frac{n}{2}\rceil$, $|\mathcal{A}_2|=\lfloor\frac{n}{2}\rfloor$,
and $x_{i2}(0)\geq x_{j2}(0)$ for any agent $i\in \mathcal{A}_1, j\in \mathcal{A}_2$. Let
$T_1:=\big\lfloor \frac{r_{\max}}{2v\sin(\eta/4)}\big\rfloor+1$
and $T$ be an arbitrary large integer. Under protocol (\ref{model2}), for $t'\in[t,t+T_1+T)$, we choose $\delta_i(t')$ and $u_i(t')$ as same as
(\ref{lem3_1_a}) and  (\ref{lem3_1_b}) respectively. Then by (\ref{lem3_1}) and (\ref{lem3_2}) we can get that
there is always no edge between $\mathcal{S}_1$ and $\mathcal{S}_2$ in the time $[t+T_1,t+T_1+T]$. With this and the method of
(\ref{rob_1}) we have for System II,
\begin{eqnarray}\label{result_2_1}
\begin{aligned}
&P\left(\bigcup_{t'=t+T_1}^{t+T_1+T}G(t') \mbox{ is not weakly connected}\big| \forall (X(t),\theta(t))\in S_{\eta}^1 \right)\geq \frac{1}{8^{n(T_1+T)}}.
\end{aligned}
\end{eqnarray}
Also, for any initial state, any $t\geq 0$ and $w_{t-1}\in \Omega^{t-1}$, together the proof process of Lemma \ref{lem1} and the method of (\ref{rob_1}) we can get for System II,
\begin{eqnarray}\label{result_2_2}
P\left((X(t+T_2),\theta(t+T_2))\in S_{\eta}^1 |w_{t-1}\right)\geq \frac{1}{4^{nT_2}},
\end{eqnarray}
where $T_2:=\lceil \frac{2(\pi-\eta/4)}{\eta}\rceil=\lceil \frac{2\pi}{\eta}-\frac{1}{4}\rceil$.
By (\ref{result_2_1}), (\ref{result_2_2}) and Bayes' theorem we have
\begin{eqnarray*}\label{result_2_3}
\begin{aligned}
&P\left(\bigcup_{t'=t+T_1+T_2}^{t+T_1+T_2+T}G(t')\mbox{ is not weakly connected}\big| w_{t-1} \right)\\
&\geq P\left((X(t+T_2),\theta(t+T_2))\in S_{\eta}^1 |w_{t-1}\right)\\
&\cdot P\left(\bigcup_{t'=t+T_1+T_2}^{t+T_1+T_2+T}G(t') \mbox{ is not weakly connected}\big| (X(t+T_2),\theta(t+T_2))\in S_{\eta}^1, w_{t-1}\right)\\
&\geq \frac{1}{4^{nT_2} 8^{n(T_1+T)}}.
\end{aligned}
\end{eqnarray*}
Similar to (\ref{rob_3}) with probability $1$ there is a time $t^*>0$ such that $\bigcup_{t'=t^*+T_1+T_2}^{t^*+T_1+T_2+T}G(t')$ is not weakly connected.

For System I, with the same process as above but using Lemma \ref{lem1_2} instead of Lemma \ref{lem1} we can get our result.
\end{proof}

\vskip 2mm

\begin{proof} \textbf{of Theorem \ref{turn_1}}
For any $t\geq 0$, if $\max_{1\leq i \leq n}|\theta_i(t)|\leq \frac{\varepsilon}{2}$ with
$\varepsilon$ being a small positive constant, then we
set $T:=\lceil\frac{(\pi-\varepsilon)K+\eta}{2(K-1)\eta}\rceil$ with $K$ being a large integer, and choose $\delta_i(t')=\frac{\eta}{2K}$ and
\begin{eqnarray*}
u_i(t')=\left\{%
\begin{array}{ll}
\eta-\frac{\eta}{2K}~~~~~\mbox{if}~\widetilde{\theta}_i(t')<\frac{\pi}{2}+\frac{\eta}{2K}-\eta\\
\frac{\pi}{2}-\widetilde{\theta}_i(t')~~~~\mbox{otherwise}
\end{array}%
\right.
\end{eqnarray*}
 for $t'\in [t,t+T)$ and $i=1,\ldots,n$. For System II (or I), under this process we can get that $\theta_i(t+T)\in [\frac{\pi}{2}-\frac{\eta}{2K},\frac{\pi}{2}+\frac{\eta}{2K}]$ for $1\leq i\leq n$, and during the time $[t,t+T)$ all the agents keep almost synchronization, which indicate the event of turn has happened. Using Lemmas \ref{robust} and \ref{lem1} (or \ref{lem1_2}) we can get for System II (or I with $\eta>\frac{\pi}{2}-\frac{\pi}{n}$), the event of turn will happen an infinite number of times with probability $1$.

Similarly, combing (\ref{lem3_5}), Lemmas \ref{robust} and \ref{lem1} (or \ref{lem1_2}) we can get for System II (or I with $\eta>\frac{\pi}{2}-\frac{\pi}{n}$), the events of bifurcation and merging will happen an infinite number of times with probability $1$.
\end{proof}

\begin{proof}\textbf{ of Theorem \ref{vortex_1}}
Because System I has the property of isotropy, we can get it exists vortices with arbitrarily long duration by adding the turning angles in the proof of the turn event of Theorems \ref{turn_1}.
\end{proof}

\section{Proof of Lemma \ref{lem4}}\label{App_lem4}

We consider protocol (\ref{model2}) first.
This proof partly takes the ideas of the proof of Lemma \ref{lem3}.
Given a large integer $K>0$, throughout this proof we choose $\delta_i(t)=\frac{\eta}{2K}$  for $i=1,\ldots,n$ and $t\geq 0$.
Set $t_0:=\lceil \frac{L}{2v\sin \frac{\eta}{K}} \rceil$. For $i=1,\ldots,n$ and $t\in[0,t_0)$, we choose
\begin{eqnarray}\label{lem4_1}
u_i(t)=\left\{%
\begin{array}{ll}
-\frac{3\eta}{2K}-\widetilde{\theta}_i(t)~~~~~\mbox{if}~x_{i2}(t)\in \big[\frac{L}{2},L\big),\\
\frac{3\eta}{2K}-\widetilde{\theta}_i(t)~~~~~~\mbox{if}~x_{i2}(t)\in\big[0,\frac{L}{2}\big).
\end{array}%
\right.
\end{eqnarray}
Under protocol (\ref{model2}),  for $t\in[0,t_0)$,
in the case of $x_{i2}(t)\geq L/2$, we have $\theta_i(t+1)\in [-2\eta/K,-\eta/K]$ and
 \begin{eqnarray*}\label{lem4_2}
 \begin{aligned}
 x_{i2}(t+1)&= x_{i2}(t)+v\sin\theta_i(t+1)\in [x_{i2}(t)-v\sin \frac{2\eta}{K},x_{i2}(t)-v\sin \frac{\eta}{K}],
 \end{aligned}
\end{eqnarray*}
and in the case of $x_{i2}(t)<L/2$, we have $\theta_i(t+1)\in [\eta/K, 2\eta/K]$ and
 \begin{eqnarray*}\label{lem4_3}
x_{i2}(t+1)\in [x_{i2}(t)+v\sin \frac{\eta}{K},x_{i2}(t)+v\sin \frac{2\eta}{K}].
\end{eqnarray*}
From these and with the condition $\max_{1\leq i\leq n}|\theta_i(0)|\leq \eta/2$ we have
\begin{eqnarray}\label{lem4_1_a}
u_i(t)\in [-\eta+\delta_i(t),\eta-\delta_i(t)],~~\forall 1\leq i\leq n, 0\leq t<t_0,
\end{eqnarray}
and can compute that
 \begin{eqnarray}\label{lem4_4}
\max_{1\leq i\leq n} |\theta_i(t_0)| \leq \frac{2\eta}{K}~\mbox{and}~\max_{1\leq i\leq n}\big|x_{i2}(t_0)-\frac{L}{2}\big| \leq  v\sin \frac{2\eta}{K}.
\end{eqnarray}

Next we proceed with the proof for the following two cases respectively:\\
\textbf{Case I:} $n$ is even or $\varepsilon>1/n$.
 We separate the $n$ agents into two disjoint sets $\mathcal{A}_1$ and $\mathcal{A}_2$ with $|\mathcal{A}_1|=\lceil\frac{n}{2}\rceil$, $|\mathcal{A}_2|=\lfloor \frac{n}{2}\rfloor$,
and $x_{i2}(t)\geq x_{j2}(t)$ for any agent $i\in \mathcal{A}_1, j\in \mathcal{A}_2$.
Let
$$t_1:=t_0+\big\lfloor \frac{r_{\max}}{2v\sin(\frac{\eta}{2}-\frac{\eta}{K})}\big\rfloor+1.$$
For $t_0\leq t<t_1$, we choose
  \begin{eqnarray}\label{lem4_4_a}
u_i(t)=\left\{%
\begin{array}{ll}
\frac{\eta}{2}-\frac{\eta}{2K}-\widetilde{\theta}_i(t)~~~~~~~\mbox{if}~i\in\mathcal{A}_1,\\
-\frac{\eta}{2}+\frac{\eta}{2K}-\widetilde{\theta}_i(t)~~~~\mbox{if}~i\in\mathcal{A}_2.
\end{array}%
\right.
\end{eqnarray}
From this and the protocol (\ref{model2}) we can get
 \begin{eqnarray}\label{lem4_o3_1}
\theta_i(t+1)\in\left\{%
\begin{array}{ll}
 \normalsize[\frac{\eta}{2}-\frac{\eta}{K},\frac{\eta}{2}\normalsize]~~~~~\mbox{if}~i\in\mathcal{A}_1,\\
\normalsize[-\frac{\eta}{2},-\frac{\eta}{2}+\frac{\eta}{K}\normalsize]~~\mbox{if}~i\in\mathcal{A}_2.
\end{array}%
\right.
\end{eqnarray}
Thus, similar to (\ref{lem3_2}) we have
\begin{eqnarray}\label{lem4_o3_2}
\begin{aligned}
x_{i2}(t_1)-x_{j2}(t_1)>r_{\max},~~~~\forall i\in\mathcal{A}_1, j\in\mathcal{A}_2,
\end{aligned}
\end{eqnarray}
and together (\ref{lem4_4}) and (\ref{lem4_o3_1}) we can compute
\begin{eqnarray}\label{lem4_5}
\begin{aligned}
\big|x_{i2}(t_1)-\frac{L}{2}\big|\leq v\sin \frac{2\eta}{K}+ v(t_1-t_0)\sin\frac{\eta}{2}, \forall 1\leq i\leq n.
\end{aligned}
\end{eqnarray}
Also, combining (\ref{lem4_4_a}), (\ref{lem4_o3_1}) and the first inequality of (\ref{lem4_4}) we have
\begin{eqnarray}\label{lem4_5_a}
u_i(t)\in\left[-\eta+\delta_i(t),\eta-\delta_i(t)\right], \forall 1\leq i\leq n, t_0\leq t<t_1.
\end{eqnarray}

Next we will give the control algorithm to minimize the value of the order function.
Set
$$t_2:=\max\left\{t_1+\big\lceil\frac{(\pi-\eta)K+\eta}{2(K-1)\eta} \big\rceil, t_1+1\right\}.$$
For $t_1\leq t<t_2$, we choose
\begin{eqnarray*}\label{lem4_6}
u_i(t)=\left\{%
\begin{array}{ll}
\eta-\frac{\eta}{2K}~~~~~\mbox{if}~\widetilde{\theta}_i(t)<\frac{\pi}{2}+\frac{\eta}{2K}-\eta\\
\frac{\pi}{2}-\widetilde{\theta}_i(t)~~~~\mbox{otherwise}
\end{array}%
\right.
\end{eqnarray*}
for $i\in\mathcal{A}_1$, and
\begin{eqnarray*}\label{lem4_7}
u_i(t)=\left\{%
\begin{array}{ll}
-\eta+\frac{\eta}{2K}~~~~~\mbox{if}~\widetilde{\theta}_i(t)>-\frac{\pi}{2}-\frac{\eta}{2K}+\eta\\
-\frac{\pi}{2}-\widetilde{\theta}_i(t)~~~~\mbox{otherwise}
\end{array}%
\right.
\end{eqnarray*}
 for $i\in\mathcal{A}_2$. From these and the fact of $\max_{1\leq i\leq n}|\theta_i(t_1)|\leq \frac{\eta}{2}$ it can be obtained that
 \begin{eqnarray}\label{lem4_7_a}
u_i(t)\in\left[-\eta+\delta_i(t),\eta-\delta_i(t)\right], \forall 1\leq i\leq n, t_1\leq t<t_2.
\end{eqnarray}
Also, if the sets $\mathcal{A}_1$ and $\mathcal{A}_2$ are disconnected at time $t$, with the similar methods to (\ref{lem1_3}) and (\ref{lem1_5}) we can get
\begin{eqnarray}\label{lem4_8}
&&\min_{i\in\mathcal{A}_1}\theta_i(t+1)\left\{%
\begin{array}{ll}
\in\big[\frac{\pi}{2}-\frac{\eta}{2K},\frac{\pi}{2}+\frac{\eta}{2K}\big]~~\mbox{if}~\min\limits_{i\in\mathcal{A}_1}\theta_i(t)\geq \frac{\pi}{2}+\frac{\eta}{2K}-\eta,\nonumber\\
\geq \frac{(K-1)\eta}{K}+\min\limits_{i\in\mathcal{A}_1}\theta_i(t)~~\mbox{otherwise},
\end{array}%
\right.
\end{eqnarray}
and
\begin{eqnarray*}\label{lem4_9}
&&\max_{i\in\mathcal{A}_2}\theta_i(t+1\left\{%
\begin{array}{ll}
\in\big[-\frac{\pi}{2}-\frac{\eta}{2K},\frac{\eta}{2K}-\frac{\pi}{2}\big]~\mbox{if}~\max\limits_{i\in\mathcal{A}_2}\theta_i(t)\leq \eta-\frac{\eta}{2K}-\frac{\pi}{2},\\
\leq -\frac{(K-1)\eta}{K}+\max\limits_{i\in\mathcal{A}_2}\theta_i(t)~~\mbox{otherwise}.
\end{array}%
\right.
\end{eqnarray*}
Therefore, for any $t_1<t< t_2$, if there exists no edge between $\mathcal{A}_1$ and $\mathcal{A}_2$ at every time in $[t_1,t)$
we can get that:  for all $i\in\mathcal{A}_1$, together (\ref{lem4_o3_1}), (\ref{lem4_8}) and the fact of $u_i(t)+b_i(t)\leq \eta$ we have
\begin{eqnarray*}\label{lem4_9_1}
\begin{aligned}
\frac{\eta}{2}-\frac{\eta}{K}+(t-t_1)(1-\frac{1}{K})\eta\leq \theta_i(t)\leq \frac{\eta}{2}+(t-t_1)\eta,
\end{aligned}
\end{eqnarray*}
 so we can get  $ x_{i2}(t) \geq x_{i2}(t_1)$ and
\begin{eqnarray*}\label{lem4_10}
\begin{aligned}
&x_{i2}(t)=x_{i2}(t_1)+\sum_{t_1<k\leq t}v\sin\theta_i(k)\\
&\leq \frac{L}{2}+v\sin \frac{2\eta}{K}+ v(t_1-t_0)\sin\frac{\eta}{2}\\
&~~~~+v\sum_{k=t_1+1}^{t_2-1}\max_{\alpha\in[-\frac{\eta}{K}+(t-t_1)(1-\frac{1}{K})\eta, (t-t_1)\eta]} \sin\left(\frac{\eta}{2}+\alpha \right)\\
&\leq \frac{L}{2}+\frac{r_{\max}}{2}+v\sum_{k=0}^{\lfloor \frac{\pi}{2\eta}-\frac{1}{2}\rfloor}\sin\left(\frac{\eta}{2}+k\eta \right)~~~~\mbox{as}~K\rightarrow\infty,
\end{aligned}
\end{eqnarray*}
where the first inequality uses (\ref{lem4_5});
symmetrically,
for $j\in\mathcal{A}_2$ we can get
 $ x_{j2}(t) \leq x_{j2}(t_1)$ and
 \begin{eqnarray*}\label{lem4_11}
\begin{aligned}
x_{j2}(t)\geq \frac{L}{2}-\frac{r_{\max}}{2}-v\sum_{k=0}^{\lfloor \frac{\pi}{2\eta}-\frac{1}{2}\rfloor}\sin\left(\frac{\eta}{2}+k\eta \right)~\mbox{as}~K\rightarrow\infty.
\end{aligned}
\end{eqnarray*}
Thus, together these with (\ref{lem4_o3_2}) and the condition $$L>2r_{\max}+2v\sum_{k=0}^{\lfloor \frac{\pi}{2\eta}-\frac{1}{2}\rfloor}\sin\left(\frac{\eta}{2}+k\eta \right),$$ by induction
we can get  $\mathcal{A}_1$ and $\mathcal{A}_2$ are always disconnected during the time interval $[t_1,t_2)$ for large $K$.
Using this and the similar method to (\ref{lem3_6}) we have $\varphi(t_2)\leq\varepsilon$ for large $K$. Combining (\ref{lem4_1_a}), (\ref{lem4_5_a}), (\ref{lem4_7_a}) this yields our result. \\
\textbf{Case II:} $n$ is odd and $\varepsilon\leq\frac{1}{n}$.  We separate the $n$ agents into three disjoint sets $\mathcal{A}_1$, $\mathcal{A}_2$ and $\mathcal{A}_3$ which satisfy that $|\mathcal{A}_1|=|\mathcal{A}_2|=\frac{n-1}{2}, |\mathcal{A}_3|=1$,  and $[X_i(t)]_2\geq [X_j(t)]_2\geq [X_k(t)]_2$ for any agent $i\in \mathcal{A}_1, j\in \mathcal{A}_3$ and $k\in \mathcal{A}_2$.
Let $$t_3:=t_0+\big\lfloor \frac{r_{\max}}{v(\sin(\frac{\eta}{2}-\frac{\eta}{K})-\sin\frac{\eta}{2K})}\big\rfloor+1.$$
For $t_0\leq t<t_3$, we choose $u_i(t)$ to be the same values as (\ref{lem4_4_a}) when $i\in\mathcal{A}_1\cup \mathcal{A}_2$, and to be
$-\widetilde{\theta}_i(t)$ when $i\in\mathcal{A}_3$, which indicates that
 \begin{eqnarray*}\label{lem4_14}
\theta_i(t+1)\in\left\{%
\begin{array}{ll}
 \normalsize[\frac{\eta}{2}-\frac{\eta}{K},\frac{\eta}{2}\normalsize]~~~~~\mbox{if}~i\in\mathcal{A}_1,\\
\normalsize[-\frac{\eta}{2},-\frac{\eta}{2}+\frac{\eta}{K}\normalsize]~~\mbox{if}~i\in\mathcal{A}_2,\\
\normalsize[-\frac{\eta}{2K}, \frac{\eta}{2K}\normalsize]~~\mbox{if}~i\in\mathcal{A}_3.
\end{array}%
\right.
\end{eqnarray*}
Then with the similar argument to (\ref{lem3_2}) we can get
the sets $\mathcal{A}_1$, $\mathcal{A}_2$ and $\mathcal{A}_3$ are mutually disconnected at time $t_3$.

Let $c_n:=\frac{\pi}{2}+\arcsin\frac{1}{n-1}$ and set $$t_4:=\max\left\{t_3+\big\lceil\frac{(c_n-\frac{\eta}{2})K+\frac{\eta}{2}}{(K-1)\eta} \big\rceil, t_3+1\right\}.$$
For $t_3\leq t<t_4$,  we choose
\begin{eqnarray*}\label{lem4_15}
u_i(t)=\left\{%
\begin{array}{ll}
\eta-\frac{\eta}{2K}~~~~~\mbox{if}~\widetilde{\theta}_i(t)<c_n+\frac{\eta}{2K}-\eta\\
c_n-\widetilde{\theta}_i(t)\big)~~~~\mbox{otherwise}
\end{array}%
\right.
\end{eqnarray*}
for $i\in\mathcal{A}_1$, and
\begin{eqnarray*}\label{lem4_16}
u_i(t)=\left\{%
\begin{array}{ll}
-\eta+\frac{\eta}{2K}~~~~~\mbox{if}~\widetilde{\theta}_i(t)>-c_n-\frac{\eta}{2K}+\eta\\
-c_n-\widetilde{\theta}_i(t)~~~~\mbox{otherwise}
\end{array}%
\right.
\end{eqnarray*}
 for $i\in\mathcal{A}_2$, and $u_i(t)=-\widetilde{\theta}_i(t)$ for $i\in\mathcal{A}_3$.
With the similar argument to Case I and using the condition of
  $$L>2r_{\max}+2v\sum_{k=0}^{\lfloor \frac{c_n}{\eta}-\frac{1}{2}\rfloor}\sin\left(\frac{\eta}{2}+k\eta \right)$$
  we can get $\mathcal{A}_1$, $\mathcal{A}_2$ and $\mathcal{A}_3$ are mutually disconnected at every time from $t_3$ to $t_4$, and so $\varphi(t_4)\leq\varepsilon$ by the similar method of (\ref{lem3_17}).
Also, similar to Case I we can get $$u_i(t)\in\left[-\eta+\delta_i(t),\eta-\delta_i(t)\right], ~~\forall 1\leq i\leq n, t_0\leq t<t_4.$$
Together these with (\ref{lem4_1_a}) our result is obtained.

For the protocol (\ref{model2_new}), if $\eta<\pi/2$ we can get our result with the similar method as protocol (\ref{model2}). Otherwise,
by Lemma \ref{lem1_2} we can control the state of the system to $S_{\eta'}^1$ with $\eta'<\pi/2$, then with the similar method as protocol (\ref{model2}) yields our result.

\section{Proof of Lemma \ref{lem4_new}}\label{App_lem4_new}

We consider  protocol (\ref{model2_new}) first. Let $b$ be the middle value of the minimal interval contains all the initial headings of the agents. Without loss of generality we assume $b\in [0,\pi/4)$.
Let $t_0:=\lceil \frac{\pi}{\eta}\rceil$-1. For $t\in [0,t_0)$ and $1\leq i \leq n$, we choose
\begin{eqnarray*}\label{lem4_new_1}
\left(\delta_i(t),u_i(t)\right)=\left\{%
\begin{array}{ll}
(\eta/4,-3\eta/4)~~~~\mbox{if}~\widetilde{\theta}_i(t)>b+\eta/2,\\
(\eta/2,b-\widetilde{\theta}_i(t))~\mbox{if}~\widetilde{\theta}_i(t)\in [b-\eta/2,b+\eta/2],\\
(\eta/4,3\eta/4)~~~~~~~\mbox{if}~\widetilde{\theta}_i(t)<b-\eta/2.
\end{array}%
\right.
\end{eqnarray*}
Similar to (\ref{lem3_new_1}) we can get
\begin{eqnarray*}\label{lem4_new_2}
\max_{1\leq i \leq n}|\theta_i(t_0)-b|\leq \frac{\eta}{2}.
\end{eqnarray*}
Set $t_1:=t_0+\lceil \frac{\pi}{2\eta}\rceil$-1. For $t\in [t_0,t_1)$ and $1\leq i \leq n$, we choose
\begin{eqnarray*}\label{lem4_new_3}
\left(\delta_i(t),u_i(t)\right)=\left\{%
\begin{array}{ll}
(\eta/4,-3\eta/4)~~\mbox{if}~\widetilde{\theta}_i(t)>\eta/2,\\
(\eta/2,-\widetilde{\theta}_i(t))~~\mbox{if}~\widetilde{\theta}_i(t)\in [-\eta/2,\eta/2].
\end{array}%
\right.
\end{eqnarray*}
With the similar method to (\ref{lem3_new_1}) again we have
\begin{eqnarray}\label{lem4_new_4}
\max_{1\leq i \leq n}|\theta_i(t_1)|\leq \frac{\eta}{2}.
\end{eqnarray}
Set $t_2:=t_1+\lceil \frac{L}{2v\sin \frac{\eta}{K}} \rceil$.
For $i=1,\ldots,n$ and $t\in[t_1,t_2)$, we choose $\delta_i(t)=\frac{\eta}{2K}$ and $u_i(t)$ as (\ref{lem4_1}).
Similar to (\ref{lem4_4}) we have
 \begin{eqnarray*}\label{lem4_new_5}
\max_{1\leq i\leq n} |\theta_i(t_2)| \leq \frac{2\eta}{K}~\mbox{and}~\max_{1\leq i\leq n}\big|x_{i2}(t_2)-\frac{L}{2}\big| \leq  v\sin \frac{2\eta}{K}.
\end{eqnarray*}

Next we separate the $n$ agents into four disjoint nonempty sets $\mathcal{A}_i$, $i=1,2,3,4$, which satisfy that $[X_i(t)]_2\geq [X_j(t)]_2\geq [X_k(t)]_2\geq [X_k(t)]_2$ for any agent $i\in \mathcal{A}_1, j\in \mathcal{A}_2, k\in \mathcal{A}_3$ and $l\in\mathcal{A}_4$.
Let
$$t_3:=t_2+\big\lfloor \frac{r_{\max}}{2v\sin(\frac{\eta}{2}-\frac{\eta}{K})}\big\rfloor+1.$$
For $t_2\leq t<t_3$, we choose
  \begin{eqnarray*}\label{lem4_new_7}
u_i(t)=\left\{%
\begin{array}{ll}
\frac{\eta}{2}-\frac{\eta}{2K}-\widetilde{\theta}_i(t)~~~~~~~\mbox{if}~i\in\mathcal{A}_1\cup\mathcal{A}_2,\\
-\frac{\eta}{2}+\frac{\eta}{2K}-\widetilde{\theta}_i(t)~~~~\mbox{if}~i\in\mathcal{A}_3\cup\mathcal{A}_4.
\end{array}%
\right.
\end{eqnarray*}
Set
$$t_4:=\max\left\{t_3+\big\lceil\frac{(\pi-\eta)K+2\eta}{2(K-1)\eta} \big\rceil, t_3+1\right\}.$$
In the time $t\in [t_3,t_4)$, we choose
\begin{eqnarray*}\label{lem4_new_8}
u_i(t)=\left\{%
\begin{array}{ll}
\eta-\frac{\eta}{2K}~~~~~\mbox{if}~\widetilde{\theta}_i(t)<\frac{\pi}{2}+\frac{\eta}{K}-\eta\\
\frac{\pi}{2}+\frac{\eta}{2K}-\widetilde{\theta}_i(t)~~~~\mbox{otherwise}
\end{array}%
\right.
\end{eqnarray*}
for $i\in\mathcal{A}_1$, and
\begin{eqnarray*}\label{lem4_new_9}
u_i(t)=\left\{%
\begin{array}{ll}
\eta-\frac{\eta}{2K}~~~~~\mbox{if}~\widetilde{\theta}_i(t)<\frac{\pi}{2}-\eta\\
\frac{\pi}{2}-\frac{\eta}{2K}-\widetilde{\theta}_i(t)~~~~\mbox{otherwise}
\end{array}%
\right.
\end{eqnarray*}
for $i\in\mathcal{A}_2$, and
\begin{eqnarray*}\label{lem4_new_10}
u_i(t)=\left\{%
\begin{array}{ll}
-\eta+\frac{\eta}{2K}~~~~~\mbox{if}~\widetilde{\theta}_i(t)>-\frac{\pi}{2}+\eta\\
-\frac{\pi}{2}+\frac{\eta}{2K}-\widetilde{\theta}_i(t)~~~~\mbox{otherwise}
\end{array}%
\right.
\end{eqnarray*}
 for $i\in\mathcal{A}_3$, and
 \begin{eqnarray*}\label{lem4_new_11}
u_i(t)=\left\{%
\begin{array}{ll}
-\eta+\frac{\eta}{2K}~~~~~\mbox{if}~\widetilde{\theta}_i(t)>-\frac{\pi}{2}-\frac{\eta}{K}+\eta\\
-\frac{\pi}{2}-\frac{\eta}{2K}-\widetilde{\theta}_i(t)~~~~\mbox{otherwise}
\end{array}%
\right.
\end{eqnarray*}
 for $i\in\mathcal{A}_4$. With the similar discuss as the Case I of the proof of Lemma \ref{lem4} we can get $d_{\theta(t_4)}\geq\pi$
under condition  (\ref{theo2_org_00}).

For protocol (\ref{model2}), combining (\ref{lem4_4}) and the process of the above paragraph our result follows.

\section{Proofs of Theorems \ref{result_4}, \ref{turn_2}, \ref{vortex_2} and \ref{relt_assmp}}\label{App_result_4}

\begin{proof} \textbf{of Theorem \ref{result_4}}
Given $t>0$,  suppose $\max_{1\leq i\leq n}|\theta_i(t)|\leq \frac{\eta}{2}$. We separate the $n$ agents into two disjoint sets $\mathcal{A}_1$ and $\mathcal{A}_2$ with $|\mathcal{A}_1|=\lceil\frac{n}{2}\rceil$, $|\mathcal{A}_2|=\lfloor\frac{n}{2}\rfloor$,
and $x_{i2}(t)\geq x_{j2}(t)$ for any agent $i\in \mathcal{A}_1, j\in \mathcal{A}_2$. Set
$T_1:=\big\lfloor \frac{r_{\max}}{2v\sin(\eta/4)}\big\rfloor+1$, where $K$ is an integer not smaller than $4$,
and set $T$ be an arbitrary large integer. Under protocol (\ref{model2}) (or (\ref{model2_new})), for $t'\in[t,t+T_1+T)$ we choose $\delta_i(t')=\frac{\eta}{2K}$ for $1\leq i\leq n$,
\begin{eqnarray*}
u_i(t')=\left\{%
\begin{array}{ll}
-\frac{3\eta}{2K}-\widetilde{\theta}_i(t')~~~~~\mbox{if}~x_{i2}(t')\geq \frac{3L}{4}\\
\frac{3\eta}{2K}-\widetilde{\theta}_i(t')~~~~~~~\mbox{if}~x_{i2}(t')< \frac{3L}{4},
\end{array}%
\right.
\end{eqnarray*}
for $i\in\mathcal{A}_1$, and
\begin{eqnarray*}
u_i(t')=\left\{%
\begin{array}{ll}
-\frac{3\eta}{2K}-\widetilde{\theta}_i(t')~~~~~\mbox{if}~x_{i2}(t')\geq L/4\\
\frac{3\eta}{2K}-\widetilde{\theta}_i(t')~~~~~~\mbox{if}~x_{i2}(t')< L/4
\end{array}%
\right.
\end{eqnarray*}
for $i\in\mathcal{A}_2$. Similar to
(\ref{lem4_4}), for all $t'\in [t+T_1,t+T_1+T]$  we can get,
 \begin{eqnarray*}
\max_{i\in\mathcal{A}_1}\big|x_{i2}(t')-\frac{3L}{4}\big| \leq  v\sin \frac{2\eta}{K}
\end{eqnarray*}
and
 \begin{eqnarray*}
\max_{ i\in \mathcal{A}_2}\big|x_{i2}(t')-\frac{L}{2}\big| \leq  v\sin \frac{2\eta}{K},
\end{eqnarray*}
which indicates that if $L>2r_{\max}$ then $\cup_{t'=t_0}^{t_0+T}G(t')$ is not weakly connected for large $K$.
Under protocol (\ref{model1}) (or (\ref{m1_new})), similar to (\ref{result_2_1}) we have
\begin{eqnarray*}
&&P\left(\bigcup_{t'=t+T_1}^{t+T_1+T}G(t') \mbox{ is not weakly connected}\big| \forall (X(t),\theta(t))\in S_{\eta}^1 \right)\geq (2K)^{-n(T_1+T)}.
\end{eqnarray*}
Because for protocol (\ref{model2}) (or (\ref{model2_new}) with $\eta>\frac{\pi}{2}-\frac{\pi}{n}$) $S_{\eta}^1$ is also finite-time robustly reachable from $ S^*$ under the periodic boundary conditions,
with the similar process from (\ref{result_2_2}) to the end of the proof of Theorem \ref{result_2} we can get our result.
\end{proof}

\vskip 2mm

\begin{proof}\textbf{ of Theorem \ref{turn_2}}
With the same discussion to the first paragraph of the proof of Theorem \ref{turn_1} we can get the event of turn
will happen an infinite number of times with probability $1$.

Given a time $t_1$, suppose $\max_{1\leq i\leq n}|\theta_i(t_1)|\leq \frac{\varepsilon}{2}$ for a small constant $\varepsilon>0$. Under
the similar process from (\ref{lem4_new_4}) to the end of the proof of Lemma \ref{lem4} we can get the event of bifurcation happens
in the time $[t_1,t_4)$, where $t_4$ is the same constant in the proof of Lemma \ref{lem4}. Also, with the similar process to the proof
of Lemma \ref{lem1} (or \ref{lem1_2}) we can get there exist a time $t_5>t_4$ such that the event of merging happens in the time $[t_4,t_5)$.
 Using Lemmas \ref{robust} and \ref{lem1} (or \ref{lem1_2}) we can get the events of bifurcation and merging will happen an infinite number of times with probability $1$.
\end{proof}

\vskip 2mm
\begin{proof} \textbf{of Theorem \ref{vortex_2}}
This proof is as same as the proof of Theorem \ref{vortex_1} but using Theorem \ref{turn_2} instead of Theorem \ref{turn_1}.
\end{proof}
\vskip 2mm

\begin{proof} \textbf{of Theorem \ref{relt_assmp}}
By our assumptions we can get $\Omega_{\pi}^3$ is finite-time robustly reachable from $\Omega^*$ under protocol (\ref{model2_new}).
Also, for any $\alpha>0$,  by (\ref{lem3_new_1}) we can get $\Omega_{\alpha}^1$ is finite-time robustly reachable from $\Omega_{\pi}^3$ under protocol  (\ref{model2_new}). Thus, similar to Lemma \ref{lem1} we have $\Omega_{\alpha}^1$ is finite-time robustly reachable from $\Omega^*$ under protocol (\ref{model2_new}). With the same proofs of
Theorems \ref{result_1}, \ref{result_2}, \ref{turn_1}, \ref{vortex_1}, \ref{result_3}, \ref{result_4}, \ref{turn_2} and \ref{vortex_2} but using
this instead of Lemma \ref{lem1_2} our results can be obtained.
 \end{proof}

\end{document}